% ------------------------------------------------------------------------
% bjourdoc.tex for birkjour.cls*******************************************
% ------------------------------------------------------------------------
%%%%%%%%%%%%%%%%%%%%%%%%%%%%%%%%%%%%%%%%%%%%%%%%%%%%%%%%%%%%%%%%%%%%%%%%%%

\documentclass{birkjour}
%
%
% THEOREM Environments (Examples)-----------------------------------------
%
\usepackage{amssymb}
\usepackage{amsthm}
\usepackage{mathrsfs}
\usepackage{mathtools}
\usepackage{esint}
 \newtheorem{thm}{Theorem}[section]
 \newtheorem{cor}[thm]{Corollary}
 \newtheorem{lem}[thm]{Lemma}
 \newtheorem{prop}[thm]{Proposition}
 \theoremstyle{definition}
 \newtheorem{defn}[thm]{Definition}
 \theoremstyle{remark}
 \newtheorem{rem}[thm]{Remark}
 \newtheorem*{ex}{Example}
 \numberwithin{equation}{section}
 
 %% \begin{linenumbers}, end it with \end{linenumbers}. Or switch it on
 %% for the whole article with \linenumbers.
 \usepackage{lineno}

\begin{document}

%\linenumbers
%-------------------------------------------------------------------------
% editorial commands: to be inserted by the editorial office
%
%\firstpage{1} \volume{228} \Copyrightyear{2004} \DOI{003-0001}
%
%
%\seriesextra{Just an add-on}
%\seriesextraline{This is the Concrete Title of this Book\br H.E. R and S.T.C. W, Eds.}
%
% for journals:
%
%\firstpage{1}
%\issuenumber{1}
%\Volumeandyear{1 (2004)}
%\Copyrightyear{2004}
%\DOI{003-xxxx-y}
%\Signet
%\commby{inhouse}
%\submitted{March 14, 2003}
%\received{March 16, 2000}
%\revised{June 1, 2000}
%\accepted{July 22, 2000}
%
%
%
%---------------------------------------------------------------------------
%Insert here the title, affiliations and abstract:
%

\title[\textit{\tiny{Reiterated Periodic Homogenization of  Parabolic Monotone Operators with Nonstandard Growth}}]
 {Reiterated Periodic Homogenization of  Parabolic Monotone Operators with Nonstandard Growth}

%----------Author 1
\author[\tiny{\textsc{Franck Tchinda}}]{\textsc{Franck Tchinda}}

\address{%
University of Maroua\\
Department of Mathematics and Computer Science\\
P.O. Box 814\\
Maroua, Cameroon}

\email{takougoumfranckarnold@gmail.com}

%----------Author 2
\author[\tiny{\textsc{Joel Fotso Tachago}}]{\textsc{Joel Fotso Tachago}}

\address{%
	University of Bamenda\\
	Higher Teachers Trainning College, Department of Mathematics\\
	P.O. Box 39\\
	Bambili, Cameroon}

\email{fotsotachago@yahoo.fr}

%\thanks{This work was completed with the support of our
%\TeX-pert.}
%----------Author 3
\author[\tiny{\textsc{Joseph Dongho}}]{\textsc{Joseph Dongho}}
\address{%
	University of Maroua\\
	Department of Mathematics and Computer Science\\
	P.O. Box 814\\
	Maroua, Cameroon}
\email{joseph.dongho@fs.univ-maroua.cm}

%----------Author 3
%\author[\tiny{\textsc{Elvira Zappale}}]{\textsc{Elvira Zappale}}
%\address{%
%	Sapienza–Universit\`{a} di Roma\\
%	Dipartimento di Scienze di Base ed Applicate per l’Ingegneria\\
%	Via Antonio Scarpa, 16\\
%	00161 Roma (RM), Italy}
%\email{elvira.zappale@uniroma1.it}
%----------classification, keywords, date
\subjclass{46E30, 74G25, 35B27, 35B40.}
%35R60 PDEs with randomness, stochas. PDEs
%76D05 navier stokes equation for incompressible...
%35B27 homoge..of PDEs
%35B40 asymptotic behaviour...
\keywords{Parabolic monotone operators, reiterated periodic homogenization, reiterated two-scale convergence, Nonstandard Growth, global homogenized problem, macroscopic homogenized problem, Orlicz spaces.}

\date{\today}
%----------additions
%\dedicatory{To my boss}
%%% ----------------------------------------------------------------------

\begin{abstract}
In this paper, we are interested in reiterated periodic homogenization for a family of parabolic problems with nonstandard growth monotone operators leading to Orlicz spaces. The aim of this work is the determination of the global homogenized problem on the one hand and the macroscopic homogenized problem on the other hand, via the reiterated two-scale convergence method adapted to this type of spaces.
\end{abstract}

%%% ----------------------------------------------------------------------
\maketitle
%%% ----------------------------------------------------------------------
%\tableofcontents

%%%%%%%%%%%%%%%%%%%%%%%%%%%%%%%%%%%%%%%%%%%%%%%%%%%%%%%%%%%%%%%%%%%%%%%%%%%%%%%%%%%%%
%%%%%%%%%%%%%%%%%%%%    1ere PARTIE   %%%%%%%%%%%%%%%%%%%%%%%%%%%%%%%%%%%%%%%%%%%%%%%

\section{Introduction} \label{sect1} 

Let $B : [0,\infty) \to [0,\infty)$ be a differentiable $N$-function and let $\widetilde{B}$ be the Fenchel's conjugate of $B$.   
 Let $\Omega$ be a smooth bounded open set in $\mathbb{R}^{d}_{x}$ (the space of variables $x= (x_{1}, \cdots, x_{d})$ (integer $d\geq1$)). Let $T$ be a positive real number. Let $f\in L^{\widetilde{B}}(0,T ; W^{-1}L^{\widetilde{B}}(\Omega,\mathbb{R}))\equiv \left(L^{B}(0,T ; W^{1}_{0}L^{B}(\Omega,\mathbb{R})) \right)^{\prime}$. For each given $\varepsilon>0$, we consider the initial-boundary value problem 
\begin{equation}\label{c1eq2}
\left\{\begin{array}{l}
\dfrac{\partial u_{\varepsilon}}{\partial t} - \textup{div}\, a\left(\dfrac{x}{\varepsilon},\dfrac{t}{\varepsilon},\dfrac{x}{\varepsilon^{2}}, Du_{\varepsilon}\right) = f \quad \textup{in}\; Q,  \\

\\
u_{\varepsilon} =0 \quad \textup{on} \; \partial\Omega\times (0,T), \\

\\
u_{\varepsilon}(x,0)=0 \quad \textup{in}\; \Omega, 
\end{array} \right.
\end{equation}
where $Q=\Omega\times (0,T)$, $D$ and div denote the usual gradient and divergence operators in $\Omega$, respectively; $L^{B}(Q)$ and $W^{1}_{0}L^{B}(\Omega,\mathbb{R})$ are Orlicz and Orlicz-Sobolev spaces (respectively) which will be specified later, and where $a (\equiv a_{i}, i=1,...,d)$ is a function from $\mathbb{R}^{d}\times\mathbb{R}\times \mathbb{R}^{d}\times\mathbb{R}^{d}$ into $\mathbb{R}^{d}$. We assume that the function $a$ and the $N$-function $B$ satisfy the following properties : 
\begin{equation}\label{c1eq3}
 \left\{\begin{array}{l}
\textup{for\, each}\, \lambda\in\mathbb{R}^{d}\, \textup{the\, function}\, (y,\tau,z)\to a(y,\tau,z,\lambda), \\

\\
\textup{denoted\, by}\, a(\cdot,\cdot,\cdot,\lambda),\, \textup{is\, measurable\ from}\, \mathbb{R}^{d}\times\mathbb{R}\times\mathbb{R}^{d}\, \textup{into}\, \mathbb{R}^{d}
\end{array}\right.
\end{equation}
\begin{equation}\label{c1eq4}
 \left\{\begin{array}{l}
a(y,\tau,z,\omega)=\omega\, \textup{almost\, everywhere\, (a.e.)\, in}\, (y,\tau,z)\in \mathbb{R}^{d}_{y}\times\mathbb{R}\times\mathbb{R}^{d}_{z}, \\

\\
 \textup{where}\, \omega\, \textup{is\, the\, origin\, in}\, \mathbb{R}^{d} 
\end{array}\right.
\end{equation} 
\begin{equation}\label{c1eq5}
 \left\{\begin{array}{l}
\textup{there\, exist\, the\, constants}\, c_{0}, c_{1}, c_{2} > 0\, \textup{such\, that}\, \textup{a.e.\, in}\,\mathbb{R}^{d}_{y}\times\mathbb{R}\times\mathbb{R}^{d}_{z} : \\

\\
(i) \quad |a(y,\tau,z,\lambda)-a(y,\tau,z,\lambda^{\prime})| \leq c_{0} \widetilde{B}^{-1} \left[B(c_{1}|\lambda-\lambda^{\prime}|)\right]  \\

\\
(ii) \quad \left[a(y,\tau,z,\lambda)-a(y,\tau,z,\lambda^{\prime})\right]\cdot(\lambda-\lambda^{\prime}) \geq c_{2} B(|\lambda-\lambda^{\prime}|), \\

\\
\textup{for\, all}\, \lambda, \lambda^{\prime} \in \mathbb{R}^{d},\, \textup{where\, the}\, dot\, \textup{denotes\, the\, usual\, Euclidean\, inner\, product\,}\\
\textup{in}\, \mathbb{R}^{d}. 
\end{array}\right.
\end{equation}
%\begin{equation}\label{c1eq7}
%\left\{\begin{array}{l}
% \textup{There\, exists \, a\, continuous\, monotone\, decreasing\, mapping}\, \sigma : [0,+\infty[\rightarrow [0,1[,\, \\
% \textup{with}\, \min_{t\geq 0} \sigma(t) >0\, \textup{and\, unbounded\, anti-derivative\, such\, that\, for\, any}\, \lambda\in \mathbb{R}^{d},  \\

%\\
%a(y,z,\tau,\lambda)\cdot\lambda \geq \widetilde{B}^{-1} \left[B(\sigma(|\lambda|))\right]\cdot B(|\lambda|)  \\

%\\
%\textup{a.e.}\, (y,z,\tau)\, \textup{in}\, \mathbb{R}^{d}\times\mathbb{R}^{d}\times\mathbb{R}.
%\end{array}\right.
%\end{equation} 
\begin{equation}\label{c1eq1}
\left\{\begin{array}{l}
\widetilde{B} \in \Delta', \;\; t^{2}\leq B(\rho_{0}t) \\

\\
1 < \rho_{1} \leq \dfrac{t b(t)}{B(t)} \leq \rho_{2} \;\, \textup{for\,any}\; t>0,  \\

\\
\textup{where}\, \rho_{0}>0, \rho_{1}, \rho_{2}\, \textup{ are\, constants,\, and}\\
b\, \textup{ is\, odd,\, increasing\, homeomorphism\, from}\, \mathbb{R}\, \textup{ onto}\, \mathbb{R}\, \textup{ such\, that}\, B(t)= \int_{0}^{t}b(s)ds. 
\end{array}\right.
\end{equation}
\begin{equation}\label{c1eq6}
\left\{\begin{array}{l}
(i) \quad \textup{for\, each}\, \lambda\in \mathbb{R}^{d}\,  a(\cdot,\cdot, \cdot, \lambda),\, \textup{is\, periodic\,that\,is\, (i.e.)\, a.e.\, in}\, \mathbb{R}^{d}_{y}\times\mathbb{R}\times\mathbb{R}^{d}_{z} :  \\

\\
\qquad a(y+k,\tau+m,z+l,\lambda) = a(y,\tau,z,\lambda),\, \textup{for\, all}\, (k,m,l) \in \mathbb{Z}^{d}\times\mathbb{Z}\times\mathbb{Z}^{d} \\

\\
 (ii) \quad \textup{The\, functions}\, a \,\textup{satisfy\, a\, local\, continuity\, assumption\, in\, the\, first\, variable, } \\
\qquad \textup{i.e.\, for\, each\, bounded\, set}\; \Lambda\, \textup{in}\, \mathbb{R}^{d}\, \textup{and}\, \eta>0,\, \textup{there\, exists}\, \rho>0\, \textup{such\, that} \\
 
 \\
 \qquad \textup{if}\, |\xi|\leq \rho,\, \textup{then} \, |a(y-\xi, \tau,z,\lambda)-a(y, \tau,z,\lambda)| \leq \eta,\\
 
 \\
  \textup{for\, all}\, (\tau,z,\lambda)\in \mathbb{R}\times\mathbb{R}^{d}\times\mathbb{R}^{d}\, \textup{and\, for\, almost\, all}\, y\in\Lambda. 
\end{array}\right.
\end{equation}
Provided the differential operator $u\to \textup{div}\,a\left(\dfrac{x}{\varepsilon},\dfrac{t}{\varepsilon},\dfrac{x}{\varepsilon^{2}}, Du\right)$ is rigorously defined, and an existence and uniqueness result for (\ref{c1eq2}) is sketched, we are interested in this paper to the reiterated homogenization of (\ref{c1eq2}), i.e., the limiting behaviour, as $\varepsilon\to 0$, of $u_{\varepsilon}$ (the solution of (\ref{c1eq2})) under the periodic assumption (\ref{c1eq6}). 
\\ Concerning the aspect of reiterated perodic homogenization in Orlicz setting, we refer to \cite{tacha1}, \cite{tacha3} and \cite{tacha2}. We also refer to \cite{guet} and \cite{wou2} for study of problem (\ref{c1eq2}) in the classical sobolev spaces where the operator $a$ has a standard growth. Note that the periodic homogenization of problem (\ref{c1eq2}) in Orlicz-Sobolev spaces was studied in \cite{kenne1}. Moreover, the periodic and reiterated periodic homogenization of nonlinear elliptic operators in Orlicz-Sobolev's spaces was studied in \cite{nnang1} and \cite{tacha4}. 
The extension to more general spaces of Orlicz-Sobolev is motivated by the fact that the two-scale convergence method introduced by Nguetseng \cite{nguet1} and later developed by Allaire \cite{allair1} have been widely adopted in homogenization of PDEs in classical Sobolev spaces neglecting materials where microstructure cannot be conveniently captured by modeling exclusivety by means of those spaces.
\par  However, the reiterated two-scale convergence reflexive theorem for classical Sobolev spaces proved in \cite{luka1}, was recently extended to Orlicz-Sobolev spaces in \cite{tacha3} as follows : \textit{any bounded sequence $(u_{\varepsilon})_{\varepsilon>0}$ in $W^{1}L^{B}(\Omega)$ ($B$ being a $N$-function of class $\Delta_{2}$) admits a subsequence (still denoted $u_{\varepsilon}$) such that, as $\varepsilon\rightarrow 0$, one has :} 
\begin{equation*}
\left\{\begin{array}{l}
u_{\varepsilon} \rightharpoonup u_{0} \;\, \textup{in}\, W^{1}L^{B}(\Omega)-\textup{weakly},  \\

\\
\int_{\Omega} D_{x_{i}} u_{\varepsilon} \varphi\left(x,\dfrac{x}{\varepsilon},\dfrac{x}{\varepsilon^{2}}\right) dx \rightarrow \iiint_{\Omega\times Y\times Z} \left(D_{x_{i}}u_{0} + D_{y_{i}}u_{1} + D_{z_{i}}u_{2}\right)\varphi(x,y,z) dx dy dz,
\end{array} \right.
\end{equation*} 
\textit{with $1\leq i\leq d$, and for all $\varphi \in L^{\widetilde{B}}(\Omega; \mathcal{C}_{per}(Y\times Z))$, where $u_{0} \in W^{1}_{0}L^{B}(\Omega)$, $u_{1}\in L^{B}_{D_{y}}(\Omega; W^{1}_{\#}L^{B}(Y))$ and $u_{2}\in L^{B}_{D_{z}}(\Omega; L^{1}_{per}(Y; W^{1}_{\#}L^{B}(Z)))$, $Y=Z=(0,1)^{d}$, $D_{x_{i}}$, $D_{y_{i}}$ and $D_{z_{i}}$ denote the distributional derivatives with respect to the variables $x_{i}$, $y_{i}$, $z_{i}$. Furthermore, if $\widetilde{B} \in \Delta^{\prime}$, then $u_{1}\in L^{B}(\Omega; W^{1}_{\#}L^{B}(Y))$ and $u_{2}\in L^{B}(\Omega; L^{B}_{per}(Y; W^{1}_{\#}L^{B}(Z)))$ (see section \ref{sect2} for detailed notations).} 
\\ The quasi-reflexivity of this result on $Q=\Omega\times (0,T)$ is fundamental in the proof of the main results of this work. Precisely, using some notations of functions spaces in section \ref{sect3}, we show the following two propositions.
\begin{prop}
 Suppose that  (\ref{c1eq1})-(\ref{c1eq6}) holds. Then, the sequence of solutions of (\ref{c1eq2}), $(u_{\varepsilon})_{\varepsilon>0}$, satisfies, as $\varepsilon\rightarrow 0$, 
\begin{equation*}
\left\{\begin{array}{l}
u_{\varepsilon} \rightharpoonup u_{0} \;\, \textup{in}\, L^{B}(0,T ; W^{1}_{0}L^{B}(\Omega))-\textup{weakly},  \\

\\
\dfrac{\partial u_{\varepsilon}}{\partial t} \rightharpoonup \dfrac{\partial u_{0}}{\partial t} \;\, \textup{in}\, L^{\widetilde{B}}(0,T ; W^{-1}L^{\widetilde{B}}(\Omega))-\textup{weakly}, \\

\\
\int_{Q} D_{x_{i}} u_{\varepsilon}\, \varphi\left(x,t,\dfrac{x}{\varepsilon},\dfrac{t}{\varepsilon},\dfrac{x}{\varepsilon^{2}}\right) dxdt \rightarrow \iint_{Q\times\Gamma\times Z} \left(D_{x_{i}}u_{0} + D_{y_{i}}u_{1} + D_{z_{i}}u_{2}\right)\varphi(x,t,y,\tau,z) dx dt dy d\tau dz,
\end{array} \right.
\end{equation*} 
for all $\varphi \in L^{\widetilde{B}}(Q; \mathcal{C}_{per}(\Gamma\times Z))$, $1\leq i\leq d$, with $\Gamma= Y\times \varTheta$ and $\varTheta=(0,1)$,  where  $(u_{0}, u_{1}, u_{2}) \in \mathbb{F}^{1,B}_{0}:= \mathcal{W}_{0}(0,T ; W^{1}_{0}L^{B}(\Omega)) \times L^{B}(Q\times \varTheta; W^{1}_{\#}L^{B}(Y;\mathbb{R}))\times L^{B}(Q\times \varTheta; L^{B}_{per}(Y; W^{1}_{\#}L^{B}(Z;\mathbb{R})))$ is the unique solution to the global homogenized problem
\begin{equation}\label{c1lem1}
\left\{\begin{array}{l}
\int_{0}^{T} \left(\dfrac{\partial u_{0}}{\partial t}(t), v_{0}(t) \right)  \\

\\
\quad  + \int_{0}^{1}\iiint_{Q\times Y\times Z} a(y, \tau,z, Du_{0}+Du_{1}+Du_{2})\cdot (Dv_{0}+ Dv_{1}+ Dv_{2}) dxdt dy  d\tau dz  \\

\\
= \int_{0}^{T} \left(f(t), v_{0}(t)\right) dt, \\

\\
\textup{for\, all}\, (v_{0}, v_{1}, v_{2}) \in \mathbb{F}^{1,B}_{0}.
\end{array} \right.
\end{equation} 
\end{prop}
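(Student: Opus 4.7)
The plan is to use the Tartar-style energy method adapted to reiterated two-scale convergence in Orlicz-Sobolev spaces, and then to identify the limiting nonlinear term via Minty's monotonicity trick. First I would derive a priori estimates: testing the weak form of (\ref{c1eq2}) against $u_\varepsilon$ and using the initial condition, assumption (\ref{c1eq4}) and the coercivity (\ref{c1eq5})(ii) gives
\[
\tfrac12\|u_\varepsilon(T)\|_{L^2(\Omega)}^2+c_2\int_Q B(|Du_\varepsilon|)\,dx\,dt\leq\langle f,u_\varepsilon\rangle,
\]
which combined with Young's inequality and an Orlicz-Poincaré inequality yields a uniform bound on $u_\varepsilon$ in $L^B(0,T;W^1_0L^B(\Omega))$. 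The Lipschitz-type growth (\ref{c1eq5})(i) with $\widetilde B\in\Delta'$ then bounds $a(x/\varepsilon,t/\varepsilon,x/\varepsilon^2,Du_\varepsilon)$ uniformly in $L^{\widetilde B}(Q)^d$, and the equation itself gives a uniform bound on $\partial u_\varepsilon/\partial t$ in $L^{\widetilde B}(0,T;W^{-1}L^{\widetilde B}(\Omega))$.

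Next, I would invoke the parabolic analogue of the reiterated two-scale compactness theorem recalled in the introduction (with microtime $\tau$ varying in $\varTheta=(0,1)$). A subsequence of $(u_\varepsilon)$ produces $(u_0,u_1,u_2)\in\mathbb{F}^{1,B}_0$ realizing the three convergences stated in the proposition. Applying the same two-scale compactness to $a(x/\varepsilon,t/\varepsilon,x/\varepsilon^2,Du_\varepsilon)$ provides a limit $a^\ast\in L^{\widetilde B}(Q\times\Gamma\times Z)^d$ such that, for every smooth admissible oscillating $\varphi_\varepsilon$, $\int_Q a(\cdot,Du_\varepsilon)\cdot\varphi_\varepsilon\,dx\,dt\to\iint_{Q\times\Gamma\times Z}a^\ast\cdot\varphi$.

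To recover the limit equation, I would test (\ref{c1eq2}) against oscillating functions
\[
\Psi_\varepsilon(x,t)=\psi_0(x,t)+\varepsilon\,\psi_1\!\left(x,t,\tfrac{x}{\varepsilon},\tfrac{t}{\varepsilon}\right)+\varepsilon^2\psi_2\!\left(x,t,\tfrac{x}{\varepsilon},\tfrac{t}{\varepsilon},\tfrac{x}{\varepsilon^2}\right),
\]
with $(\psi_0,\psi_1,\psi_2)$ in a dense subspace of $\mathbb{F}^{1,B}_0$ of smooth periodic data. Each linear term passes to the limit by the reiterated two-scale convergence of $u_\varepsilon$, and after density I obtain
\[
\int_0^T\!\!\left(\tfrac{\partial u_0}{\partial t},\psi_0\right)dt+\iint_{Q\times\Gamma\times Z}a^\ast\cdot(D\psi_0+D_y\psi_1+D_z\psi_2)=\int_0^T\!(f,\psi_0)\,dt
\]
for every $(\psi_0,\psi_1,\psi_2)\in\mathbb{F}^{1,B}_0$.

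The main obstacle is identifying $a^\ast(x,t,y,\tau,z)=a(y,\tau,z,Du_0+D_yu_1+D_zu_2)$. I would build admissible oscillating $\Phi_\varepsilon$ from smooth $(\varphi_0,\varphi_1,\varphi_2)$ and use the monotonicity (\ref{c1eq5})(ii):
\[
\int_Q\!\bigl[a(\cdot,Du_\varepsilon)-a(\cdot,D\Phi_\varepsilon)\bigr]\!\cdot\!(Du_\varepsilon-D\Phi_\varepsilon)\,dx\,dt\geq 0,
\]
rewrite the cross term $\int_Q a(\cdot,Du_\varepsilon)\cdot Du_\varepsilon$ via the energy identity obtained by testing (\ref{c1eq2}) against $u_\varepsilon$, pass to the limit with the help of the local continuity assumption (\ref{c1eq6})(ii) to handle $a(\cdot,D\Phi_\varepsilon)$, then take $(\varphi_0,\varphi_1,\varphi_2)=(u_0,u_1,u_2)+\lambda(\eta_0,\eta_1,\eta_2)$ and send $\lambda\to 0$. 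This yields (\ref{c1lem1}). Uniqueness in $\mathbb{F}^{1,B}_0$ then follows by subtracting two solutions and testing against their difference, using strict monotonicity to annihilate $Du_0+D_yu_1+D_zu_2$. The delicate technical point throughout is justifying these limit passages in the Orlicz-Sobolev framework: it is here that the $\Delta'$-condition on $\widetilde B$ and the local continuity of $a$ in $y$ combine to ensure the required $L^{\widetilde B}$-convergence of the nonlinearities.
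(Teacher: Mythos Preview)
Your proposal is correct and follows essentially the same route as the paper: uniform a priori estimates from coercivity and growth, reiterated two-scale compactness to produce $(u_0,u_1,u_2)$, and Minty's monotonicity trick with oscillating test functions $\Phi_\varepsilon=\varphi_0+\varepsilon\varphi_1^\varepsilon+\varepsilon^2\varphi_2^\varepsilon$, the cross term $\int_Q a(\cdot,Du_\varepsilon)\cdot Du_\varepsilon$ being handled through the energy identity and weak lower semicontinuity of $\|u_\varepsilon(T)\|_{L^2(\Omega)}$. The only organizational difference is that you first extract a two-scale limit $a^\ast$ of the flux and then identify it, whereas the paper bypasses this intermediate object and works directly with the inequality $\int_0^T(f-u_\varepsilon',u_\varepsilon-\phi_\varepsilon)\,dt-\int_Q a^\varepsilon(\cdot,D\phi_\varepsilon)\cdot(Du_\varepsilon-D\phi_\varepsilon)\,dx\,dt\ge 0$ to reach the limiting variational inequality, then applies Minty; both variants are standard and the technical ingredients (Proposition~\ref{c5eq3}/Corollary~5.2 for the convergence of $a^\varepsilon(\cdot,D\Phi_\varepsilon)$, density of $F_0^\infty$ in $\mathbb{F}^1_0L^B$) are identical.
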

\begin{prop}
For every $\varepsilon>0$, let (\ref{c1eq2}) be such that $a$ and $f$ satisfy (\ref{c1eq1})-(\ref{c1eq6}). Let $u_{0} \in \mathcal{W}_{0}\left(0,T; W^{1}_{0}L^{B}(\Omega;\mathbb{R})\right)$ be the solution defined by means of (\ref{c1lem1}). Then, there is a unique solution to the macroscopic homogenized problem 
\begin{equation*}
\left\{\begin{array}{l}
\dfrac{\partial u_{0}}{\partial t} - \textup{div}\,q(Du_{0}) = f \quad \textup{in}\;Q \\

\\
u_{0}=0 \quad \textup{on}\; \partial\Omega\times (0,T) \\

\\
u_{0}(x,0) = 0 \quad \textup{in} \; \Omega,
\end{array}\right.
\end{equation*}
where $q$ is defined as follows. For $\xi\in \mathbb{R}^{d}$
\begin{equation*}
q(\xi) = \int_{\Gamma} h(y,\tau, \xi + D_{y}\pi_{1}(\xi))\,dyd\tau,
\end{equation*}
where, for a.e. $y\in Y$, and for any $\xi\in\mathbb{R}^{d}$  
\begin{equation*}
h(y,\tau,\xi) := \iint_{\varTheta\times Z} a_{i}(y,z,\tau, \xi + D_{z}\pi_{2}(y,\xi))\,dzd\tau,
\end{equation*} 
where, for a.e. $y\in Y$, and for any $\xi\in\mathbb{R}^{d}$, $\pi_{2}(y,\xi)$, is the unique solution to the following variational cell problem : 
\begin{equation*}
\left\{\begin{array}{l}
\textup{find}\; \pi_{2}(y,\xi) \in L^{B}(\varTheta; W^{1}_{\#}L^{B}(Z)) \; \textup{such\, that} \\

\\
\iint_{\varTheta\times Z} a(y,z,\tau, \xi + D_{z}\pi_{2}(y,\xi))\cdot D_{z}\theta\, dzd\tau = 0 \;\;\, \textup{for\, all}\, \theta\in L^{B}(\varTheta; W^{1}_{\#}L^{B}(Z)), 
\end{array}\right.
\end{equation*}
and $\pi_{1}\in L^{B}(\varTheta; W^{1}_{\#}L^{B}(Y))$ is the unique solution to the  variational problem  
\begin{equation*}
\left\{\begin{array}{l}
\textup{find}\; \pi_{1}(\xi) \in L^{B}(\varTheta; W^{1}_{\#}L^{B}(Y)) \; \textup{such\, that} \\

\\
\int_{\Gamma} h(y,\tau, \xi + D_{y}\pi_{1}(\xi))\cdot D_{y}\theta\, dyd\tau = 0 \;\;\, \textup{for\, all}\, \theta\in L^{B}(\varTheta; W^{1}_{\#}L^{B}(Y)). 
\end{array}\right.
\end{equation*}
\end{prop}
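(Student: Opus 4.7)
The plan is to derive the macroscopic problem from the global variational identity (\ref{c1lem1}) by a successive decoupling of the cell equations through appropriate choices of test functions. First, take $v_{0}=0$ and $v_{1}=0$ with $v_{2}\in L^{B}(Q\times\varTheta;L^{B}_{per}(Y;W^{1}_{\#}L^{B}(Z;\mathbb{R})))$ arbitrary. The time-derivative term drops out, only the $D_{z}v_{2}$ part of the total gradient survives, and a localization argument using separated test functions $\phi(x,t)\psi(y,\tau)\theta(z)$ followed by a density argument reduces (\ref{c1lem1}) to the micro-cell identity
\[
\iint_{\varTheta\times Z} a\bigl(y,\tau,z,\xi(x,t,y,\tau)+D_{z}u_{2}\bigr)\cdot D_{z}\theta\, d\tau dz = 0
\]
for a.e.\ $(x,t,y)\in Q\times Y$, where $\xi(x,t,y,\tau)=Du_{0}(x,t)+D_{y}u_{1}(x,t,y,\tau)$. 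For each fixed $y\in Y$ and parameter $\xi\in\mathbb{R}^{d}$, the Browder--Minty surjectivity theorem applied to the hemicontinuous, monotone and coercive operator induced by $a(y,\tau,\cdot,\xi+\cdot)$ on the reflexive Orlicz--Sobolev space $L^{B}(\varTheta;W^{1}_{\#}L^{B}(Z))$ --- reflexivity being ensured by $B,\widetilde{B}\in\Delta_{2}$ via (\ref{c1eq1}) --- produces a unique corrector $\pi_{2}(y,\xi)$. This identifies $u_{2}(x,t,y,\tau,z)$ with $\pi_{2}(y,\xi(x,t,y,\tau))(\tau,z)$ and legitimates the definition of $h(y,\tau,\xi)$.

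Next, take $v_{0}=0$ and $v_{2}=0$ with $v_{1}\in L^{B}(Q\times\varTheta;W^{1}_{\#}L^{B}(Y))$ arbitrary. Substituting the corrector $\pi_{2}$ to eliminate the $D_{z}$ contributions and integrating out $z$ by Fubini, the identity collapses to the meso-cell problem
\[
\int_{\Gamma} h\bigl(y,\tau,Du_{0}(x,t)+D_{y}u_{1}\bigr)\cdot D_{y}v_{1}\, dyd\tau = 0 \quad \textup{a.e.\ in } Q.
\]
Since $h$ inherits monotonicity and the nonstandard growth bounds from $a$ by an elementary integration combined with (\ref{c1eq5}) and the $\Delta_{2}$ conditions on $B$, a second application of Browder--Minty in $L^{B}(\varTheta;W^{1}_{\#}L^{B}(Y))$ yields the unique $\pi_{1}(\xi)$, and therefore $u_{1}(x,t,y,\tau)=\pi_{1}(Du_{0}(x,t))(\tau,y)$ almost everywhere.

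Finally, taking $v_{1}=v_{2}=0$ and $v_{0}\in\mathcal{W}_{0}(0,T;W^{1}_{0}L^{B}(\Omega))$ arbitrary in (\ref{c1lem1}), substituting the representations of $u_{1}$ and $u_{2}$ and integrating out the microscopic variables $(y,\tau,z)$, the identity reduces to
\[
\int_{0}^{T}\!\Bigl(\tfrac{\partial u_{0}}{\partial t}(t),v_{0}(t)\Bigr)dt + \int_{Q} q(Du_{0})\cdot Dv_{0}\, dxdt = \int_{0}^{T}(f(t),v_{0}(t))\, dt,
\]
which is the weak formulation of the announced macroscopic problem. Uniqueness then amounts to verifying that $q:\mathbb{R}^{d}\to\mathbb{R}^{d}$ is continuous, monotone, and satisfies bounds of the form $|q(\xi)|\leq C_{1}\widetilde{B}^{-1}[B(C_{2}|\xi|)]$ together with $(q(\xi)-q(\xi'))\cdot(\xi-\xi')\geq c\,B(|\xi-\xi'|)$. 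These properties will be obtained by testing the cell equations for $\pi_{2}(y,\xi)$ and $\pi_{2}(y,\xi')$ (resp.\ for $\pi_{1}(\xi)$ and $\pi_{1}(\xi')$) against the corresponding differences and pushing the structural inequalities (\ref{c1eq5}) through the two successive averages.

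The main obstacle is precisely the transfer of the strict monotonicity and the nonstandard growth conditions of $a$ through the successive averages $a\mapsto h\mapsto q$. In the classical $L^{p}$ setting these estimates use power-type norms, but in the Orlicz framework one must argue directly with the modular determined by $B$; the hypothesis $\widetilde{B}\in\Delta'$, the lower bound $t^{2}\leq B(\rho_{0}t)$ and the double control $\rho_{1}\leq tb(t)/B(t)\leq\rho_{2}$ in (\ref{c1eq1}) must be combined carefully to preserve condition (\ref{c1eq5})(ii) at each scale and thereby enable the final application of Browder--Minty to the macroscopic monotone operator associated with $q$.
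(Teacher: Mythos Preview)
Your decoupling of the global variational identity into three scales by successive choices of test functions $(v_{0},v_{1},v_{2})$ matches the paper's derivation exactly: the paper obtains the three systems corresponding to $v_{2}$ alone, $v_{1}$ alone, $v_{0}$ alone, identifies $u_{2}$ and $u_{1}$ via the cell problems for $\pi_{2}$ and $\pi_{1}$, and arrives at the same weak macroscopic formulation.

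For uniqueness, the routes diverge slightly in presentation though the underlying mechanism is identical. You propose to establish structural properties of $q$ itself (strict monotonicity and growth) and then invoke Browder--Minty. The paper instead takes two solutions $u_{0},w_{0}$ directly, subtracts the weak formulations, and unfolds $q$ back through $h$ to $a$: using the cell equations for $\pi_{1}$ and $\pi_{2}$ one inserts the corrector differences for free, so that $\int_{Q}(q(Du_{0})-q(Dw_{0}))\cdot(Du_{0}-Dw_{0})$ becomes the full integral $\iiint(a(\cdot,A)-a(\cdot,B))\cdot(A-B)$ with $A,B$ the complete three-scale gradients, and then (\ref{c1eq5})(ii) for $a$ finishes. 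What you flag as the ``main obstacle'' is thus resolved by exactly this computation; once the cell equations are exploited, strict monotonicity of $a$ together with Jensen's inequality (convexity of $B$ and zero mean of the corrector gradients over each period) gives $(q(\xi)-q(\xi'))\cdot(\xi-\xi')\geq c\,B(|\xi-\xi'|)$ without any need for $\widetilde{B}\in\Delta'$ or the finer parts of (\ref{c1eq1}). Note also that the growth bound on $q$ you mention is unnecessary here: existence of $u_{0}$ is already supplied by the global problem, so only strict monotonicity is required for uniqueness.
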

\par The paper is organized as follows : Section \ref{sect2} deals with some preliminary results on Orlicz-Sobolev spaces. In section \ref{sect3}, we sketch out the existence and uniqueness, for each $\varepsilon$, of the solution of (\ref{c1eq2}). In section \ref{sect4}, after extending the main reiterated two-scale convergence result in our setting, we study in section \ref{sect5} the reiterated homogenization of problem (\ref{c1eq2}). Finally, in the Appendix, for the reader's convenience, we recall some trace results and we justify the well-posedness of abstract problem for (\ref{c1eq2}) under our set of assumptions.

\section{Preliminaries }\label{sect2}

In what follows $X$ and $V$ denote a locally compact space and a Banach space, respectively, and $\mathcal{C}(X; V)$ stands for the space of continuous functions from $X$ into $V$, and $\mathcal{C}_{b}(X; V)$ stands for those functions in $\mathcal{C}(X; V)$ that are bounded. The space $\mathcal{C}_{b}(X; V)$ is endowed with the supremum norm 
\begin{equation*}
\|u\|_{\infty} = \sup_{x\in X} \|u(x)\|,
\end{equation*}
where $\|\cdot\|$ denotes the norm in $V$, (in particular, given an open set $A\subset \mathbb{R}^{d}$ by $\mathcal{C}_{b}(A)$, we denote the space of real valued continuous and bounded functions defined in $A$). Likewise the spaces $L^{p}(X; V)$ and $L^{p}_{loc}(X; V)$ ($X$ provided with a positive Randon measure) are denoted by $L^{p}(X)$ and $L^{p}_{loc}(X)$, respectively, when $V=\mathbb{R}$ (we refer to \cite{fonse1} for integration theory). \\ In the sequel, we denote by $Y$ and $Z$ two identical copies of the unit cube $(0, 1)^{d}$.\\ In order to enlighten the space variable under consideration, we will adopt the notation $\mathbb{R}^{d}_{x}$, $\mathbb{R}^{d}_{y}$ or $\mathbb{R}^{d}_{z}$ to indicate where $x$, $y$ or $z$ belong to. \\
The family of open subsets in $\mathbb{R}^{d}_{x}$ will be denoted by $\mathcal{A}(\mathbb{R}^{d}_{x})$. \\
For any subset $E$ of $\mathbb{R}^{m}$, $m\in \mathbb{N}$, by $\overline{E}$, we denote its closure in the relative topology. \\
For every $x\in \mathbb{R}^{d}$, we denote by $[x]$ its integer part, namely, the vector in $\mathbb{Z}^{d}$, which has as components the integer parts of the components of $x$. 
By $\mathcal{L}^{d}$, we denote the Lebesgue measure in $\mathbb{R}^{d}$.  

\subsection{$N$-functions}

Let $\Phi : [0, \infty) \rightarrow  [0, \infty)$ be an $N$-function, i.e. : 
\begin{itemize}
	\item[i)] $\Phi$ is continuous and convex ,
	\item [ii)] $\Phi(t) > 0$ for $t > 0$,
	\item[iii)] $\displaystyle \lim_{t \to 0} \dfrac{\Phi(t)}{t} = 0$ and $\displaystyle \lim_{t \to \infty} \dfrac{\Phi(t)}{t} = \infty$.
\end{itemize}
Equivalently, $\Phi$ is of the form 
\begin{equation*}
\Phi(t) = \int_{0}^{t} \phi(\tau)d\tau,
\end{equation*}
 where $\phi : [0, \infty) \rightarrow  [0, \infty)$ is nondecreasing, right continuous, with $\phi(0)=0$, $\phi(t) >0$ if $t>0$ and $\phi(t) \to \infty$ if $t \to \infty$.\\
Given two $N$-functions $\Phi$ and $\Psi$, we say that $\Phi$ dominates $\Psi$ (denoted by $\Phi \succ \Psi$ or $\Psi \prec \Phi$) near infinity if there are $k>1$ and $t_{0}>0$ such that 
\begin{equation*}
\Psi(t) \leq \Phi(kt), \quad \forall t> t_{0}.
\end{equation*}
With this in mind, it is well known that if $\displaystyle \lim_{t\to +\infty} \frac{\Phi(t)}{\Psi(t)} = +\infty$ then $\Phi$ dominates $\Psi$ near infinity. \\
The Fenchel's conjugate (or complementary $N$-function) of the $N$-function $\Phi$, is an $N$-function denoted by $\widetilde{\Phi}$ and defined by :
\begin{equation*}
\widetilde{\Phi}(s) = \sup_{t\geq 0} \left[ ts - \Phi(t) \right], \qquad s\geq 0,
\end{equation*}
and we have the following Young inequality
\begin{equation*}
ts \leq \Phi(t)+\widetilde{\Phi}(s), \qquad t,s\geq 0.
\end{equation*}
An $N$-function $\Phi$ is said to satisfy the $\nabla_{2}$-condition for large $x$ (resp. for small $x$ or for all $x$), which is written as $\Phi \in \nabla_{2}(\infty)$ (resp. $\Phi \in \nabla_{2}(0)$ or $\Phi \in \nabla_{2}$), if there exist constants $x_{0} > 0$, $c > 2$ such that  
\begin{equation*}
\Phi(x) \leq \dfrac{1}{2c} \Phi(cx),
\end{equation*}
for $x \geq x_{0}$ (resp. for $0 \leq x \leq x_{0}$ or for all $x \geq 0$).	
Similarly,  $\Phi$ is said to satisfy the $\Delta_{2}$-condition for large $t$ (resp. for small $t$ or for all $t$), which is written as $\Phi \in \Delta_{2}(\infty)$ (resp. $\Phi \in \Delta_{2}(0)$ or $\Phi \in \Delta_{2}$), if there exist constants  $t_{0} > 0$, $k > 2$ such that  
\begin{equation*}
\Phi(2t) \leq k \Phi(t),
\end{equation*}
for $t \geq t_{0}$ (resp. for $0 \leq t \leq t_{0}$ or for all $t \geq 0$).	\\
Let  $\Phi \in \Delta_{2}$ be an $N$-function. Then there are $k> 0$ and $t_{0} \geq 0$ such that 
\begin{equation*} %\label{lem11}
\widetilde{\Phi}(\phi(t)) \leq k \, \Phi(t) \quad \textup{for} \, \textup{all} \, t \geq t_{0}.
\end{equation*}	
Let $t\rightarrow \Phi(t) = \int_{0}^{t} \phi(\tau) d\tau$ be an $N$-function, and let $\widetilde{\Phi}$ be the Fenchel's conjugate of $\Phi$. Then one has
\begin{equation*} %\label{lem10} 
\left\{ \begin{array}{l}
\dfrac{t\,\phi(t)}{\Phi(t)} \geq 1 \quad (\textup{resp.} \, > 1) \; \textup{if} \, \phi \, \textup{is} \, \textup{strictly} \, \textup{increasing}  \\
\widetilde{\Phi}(\phi(t)) \leq t\,\phi(t) \leq \Phi(2t)
\end{array}\right.
\end{equation*}
for all $t >0$. Moreover, an $N$-function $\Phi$ is said to satisfy the $\Delta'$-condition (or $\Phi$ belongs to the class $\Delta'$) denoted by $\Phi \in \Delta'$ if there exists $\beta > 0$ such that  
\begin{equation*}
\Phi(ts) \leq \beta\, \Phi(t)\Phi(s), \quad \forall t,s \geq 0.
\end{equation*}
\begin{ex}
 Let $p > 1$, the function $t \rightarrow \frac{t^{p}}{p}$ is an $N$-function which satisfy $\Delta_{2}$-condition, $\nabla_{2}$-condition, $\Delta'$-condition and its Fenchel's conjugate is the $N$-function $t \rightarrow \frac{t^{q}}{q}$, where $\frac{1}{p} + \frac{1}{q} = 1$. The function $t \rightarrow t^{p}\ln(1+t)$, ($p \geq 1$) is an $N$-function which satisfy $\Delta_{2}$-condition, while the $N$-functions $t \rightarrow t^{\ln t}$ and $t \rightarrow e^{t^{\alpha}} - 1$, ($\alpha >0$) are not of class $\Delta_{2}$.
\end{ex}
\begin{rem}
	It follows by (\ref{c1eq1}) that $N$-functions $B,\, \widetilde{B} \in \Delta_{2}$. \cite[Lemma C.3]{clem1}
\end{rem}

\subsection{Orlicz-Sobolev's spaces}

Let $U$ be a bounded open set in $\mathbb{R}^{d}$ (integer $d\geq 1$), and let $\Phi$ be an $N$-function. The Orlicz-class $\widehat{L}^{\Phi}(U)$ is defined to be the space of all measurable functions $u: U\rightarrow \mathbb{R}$ such that 
\begin{equation*}
\int_{U} \Phi\left(\dfrac{|u(x)|}{\delta} \right)dx < +\infty,
\end{equation*}
for some $\delta=\delta(u) >0$. \\
The Orlicz-space $L^{\Phi}(U)$ is the smallest vector space containing the Orlicz-class $\widehat{L}^{\Phi}(U)$.\\
On the space $L^{\Phi}(U)$ we define two equivalent norms :
\begin{itemize}
	\item[i)] the Luxemburg norm, 
	\begin{equation*}
	\lVert u \rVert_{L^{\Phi}(U)} = \inf \left\{ \delta>0 \; : \; \int_{U}^{} \Phi\left(\dfrac{|u(x)|}{\delta} \right)dx  \leq 1 \right\}, \quad \forall u \in L^{\Phi}(U),
	\end{equation*}
	\item[ii)] the Orcliz norm, 
	\begin{equation*}
	\begin{array}{rcc}
	\lVert u \rVert_{L^{(\Phi)}(U)}& =& \sup \left\{ \left| \int_{U} u(x)v(x) dx \right| \; : \; v \in L^{\widetilde{\Phi}}(U) \; \textup{and} \; \|v\|_{L^{\widetilde{\Phi}}(U)} \leq 1 \right\}, \\
	& &  \forall u \in L^{\Phi}(U),
	\end{array}
	\end{equation*}
\end{itemize}
which makes it a Banach space.  \\
Now we will give some properties of Orlicz spaces and we will refer to \cite{adam,adams} for more details.
Assume that $\Phi \in \Delta_{2}$. Then :
\begin{itemize}
	\item[i)] $\mathcal{D}(U)$ is dense in $L^{\Phi}(U)$
	\item[ii)] $L^{\Phi}(U)$ is separable and reflexive whenever $\widetilde{\Phi}\in \Delta_{2}$,
	\item[iii)] the dual of $L^{\Phi}(U)$ is identified with $L^{\widetilde{\Phi}}(U)$, and the dual norm on $L^{\widetilde{\Phi}}(U)$ is equivalent to $\lVert \cdot \rVert_{L^{\widetilde{\Phi}}(U)}$,
	\item[iv)] given $u \in L^{\Phi}(U)$ and $v \in L^{\widetilde{\Phi}}(U)$ the product $uv$ belongs to $L^{1}(U)$ with the generalized H\"{o}lder's inequality 
	\begin{equation}\label{c2eq19}
	\left| \int_{U} u(x)v(x)dx \right| \leq 2\, \lVert u \rVert_{L^{\Phi}(U)} \, \lVert v \rVert_{L^{\widetilde{\Phi}}(U)},
	\end{equation}
	\item[v)] given $v \in L^{\Phi}(U)$ the linear functional $L_{v}$ on $L^{\widetilde{\Phi}}(U)$ defined by 
	\begin{equation*}
	L_{v}(u) = \int_{U} u(x)v(x) dx, \quad u \in L^{\widetilde{\Phi}}(U),
	\end{equation*} 
	belongs to the dual $[L^{\widetilde{\Phi}}(U)]'$ with $\lVert v \rVert_{L^{(\Phi)}(U)} \leq \lVert L_{v} \rVert_{[L^{\widetilde{\Phi}}(U)]'} \leq 2 \lVert v \rVert_{L^{(\Phi)}(U)}$,
	\item[vi)] $L^{\Phi}(U) \hookrightarrow L^{1}(U) \hookrightarrow	L^{1}_{\textup{loc}}(U) \hookrightarrow \mathcal{D'}(U)$, each embedding being continuous,
	\item[vii)] given two $N$-functions $\Phi$ and $\Psi$, we have the continuous embbeding \\ $L^{\Phi}(U) \hookrightarrow L^{\Psi}(U)$ if and only if $\Phi \succ \Psi$ near infinity,
	\item[viii)] the product space $L^{\Phi}(U)^{d}= L^{\Phi}(U)\times L^{\Phi}(U) \times \cdots \times L^{\Phi}(U)$, ($d$-times), is endowed with the norm 
	\begin{equation*}
	\lVert \textbf{v} \rVert_{L^{(\Phi)}(U)^{d}} = \sum_{i=1}^{d} \lVert v_{i} \rVert_{L^{(\Phi)}(U)}, \quad \textbf{v}=(v_{i}) \in L^{\Phi}(U)^{d}.
	\end{equation*}
	\item[ix)]  If $U_{1} \subset \mathbb{R}^{d_{1}}$ and $U_{2} \subset \mathbb{R}^{d_{2}}$ are two bounded open sets with $d_{1}+d_{2}=d$, and if $u \in L^{\Phi}(U_{1}\times U_{2})$,
	then for almost all $x_{1}\in U_{1}$, $u(x_{1}, \cdot) \in L^{\Phi}(U_{2})$. If in addition $\widetilde{\Phi} \in \Delta'$ associate with a constant $\beta$, then the function $u$ belongs to $L^{\Phi}(U_{1}, L^{\Phi}(U_{2}))$, with	
	\begin{equation}\label{be1}
	\|u\|_{L^{\Phi}(U_{1}, L^{\Phi}(U_{2}))} \leq \iint_{U_{1}\times U_{2}} \Phi(|u(x_{1},x_{2})|) dx_{1}dx_{2} + \beta.
	\end{equation}
\end{itemize}
Moreover, we will be interested with the properties below :
\begin{lem}\label{c2lem2}
	Let $\Phi$ be the $N$-function of (\ref{c1eq1}). If $v\in L^{\Phi}(U)$ then \cite{miha1} : 
	\begin{itemize}
		\item[i)] $\|v\|_{\Phi,U} > 1$ implies $\|v\|_{\Phi,U}^{\rho_{1}} \leq \int_{U} \Phi(|v(x)|) dx \leq \|v\|_{\Phi,U}^{\rho_{2}}$
		\item[ii)] $\|v\|_{\Phi,U} < 1$ implies $\|v\|_{\Phi,U}^{\rho_{2}} \leq \int_{U} \Phi(|v(x)|) dx \leq \|v\|_{\Phi,U}^{\rho_{1}}$
	\end{itemize}
and if $v\in L^{\widetilde{\Phi}}(U)$ then \cite[Lemma C.7]{clem1} :
\begin{itemize}
	\item[iii)] $\|v\|_{\widetilde{\Phi},U} < 1$ implies $ \int_{U} \widetilde{\Phi}(|v(x)|) dx \leq \|v\|_{\widetilde{\Phi},U}^{\frac{\rho_{1}}{\rho_{1}-1}}$.
\end{itemize}
\end{lem}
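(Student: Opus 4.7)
The strategy is to extract from the bound $\rho_{1} \leq tb(t)/B(t) \leq \rho_{2}$ in (\ref{c1eq1}) a multiplicative scaling property of $\Phi$, apply it at the Luxemburg scale to obtain (i) and (ii), and then transfer it to $\widetilde{\Phi}$ via the Young equality $\widetilde{\Phi}(\phi(t)) = t\phi(t) - \Phi(t)$ to conclude (iii).

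For (i)--(ii), I would first establish the pointwise scaling identity: for fixed $t > 0$, the function $g(\lambda) := \Phi(\lambda t)$ satisfies
\[
\frac{g'(\lambda)}{g(\lambda)} = \frac{t\,\phi(\lambda t)}{\Phi(\lambda t)} = \frac{1}{\lambda}\cdot\frac{(\lambda t)\,\phi(\lambda t)}{\Phi(\lambda t)} \in [\rho_{1}/\lambda,\;\rho_{2}/\lambda]
\]
by (\ref{c1eq1}). Integrating $d(\ln g)/d\lambda$ from $1$ to $\lambda$ then yields, for every $t>0$,
\[
\lambda^{\rho_{1}}\,\Phi(t) \leq \Phi(\lambda t) \leq \lambda^{\rho_{2}}\,\Phi(t) \quad (\lambda \geq 1),
\]
\[
\lambda^{\rho_{2}}\,\Phi(t) \leq \Phi(\lambda t) \leq \lambda^{\rho_{1}}\,\Phi(t) \quad (0 < \lambda \leq 1).
\]
Since $\Phi = B \in \Delta_{2}$ by the remark preceding this subsection, the Luxemburg norm $\delta := \|v\|_{\Phi,U}$ is saturated for $v \neq 0$, that is $\int_{U} \Phi(|v(x)|/\delta)\,dx = 1$. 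Writing $|v(x)| = \delta\cdot(|v(x)|/\delta)$, applying the pointwise scaling with $\lambda = \delta$ and $t = |v(x)|/\delta$, and then integrating over $U$ produces (i) in the regime $\delta > 1$ and (ii) in the regime $\delta < 1$.

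For (iii), the plan is to repeat the argument with $\widetilde{\Phi}$ in place of $\Phi$, once the corresponding exponents have been identified. Using $\widetilde{\Phi}(b(t)) = tb(t) - \Phi(t)$ and the substitution $s = b(t)$, so that $\tilde b(s) := b^{-1}(s) = t$, one checks that
\[
\frac{s\,\tilde b(s)}{\widetilde{\Phi}(s)} = \frac{tb(t)}{tb(t) - \Phi(t)} = \frac{r}{r-1}, \qquad r := \frac{tb(t)}{\Phi(t)} \in [\rho_{1},\rho_{2}].
\]
Because $r \mapsto r/(r-1)$ is decreasing on $(1,\infty)$, the ratio $s\tilde b(s)/\widetilde{\Phi}(s)$ belongs to $[\rho_{2}/(\rho_{2}-1),\;\rho_{1}/(\rho_{1}-1)]$, so the scaling argument of the previous steps, now applied to $\widetilde{\Phi}$ with these new exponents, delivers (iii). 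The main technical point to confirm is the Luxemburg saturation for $\widetilde{\Phi}$ invoked in this last step: it requires $\widetilde{\Phi} \in \Delta_{2}$, which is guaranteed by the condition $\widetilde{B} \in \Delta'$ in (\ref{c1eq1}) together with the remark cited above.
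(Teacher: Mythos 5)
Your argument for (i) and (ii) is correct and is essentially the standard proof behind the references the paper cites for this lemma (the paper itself gives no proof, only the citations \cite{miha1} and \cite{clem1}): integrating $\frac{d}{d\lambda}\log\Phi(\lambda t)\in[\rho_{1}/\lambda,\rho_{2}/\lambda]$ to get the power-type scaling of $\Phi$, then evaluating at $\lambda=\|v\|_{\Phi,U}$ and using the saturation $\int_{U}\Phi(|v|/\|v\|_{\Phi,U})\,dx=1$, which is legitimate since $B\in\Delta_{2}$ by (\ref{c1eq1}). Your identification of the conjugate index, $s\widetilde{\phi}(s)/\widetilde{\Phi}(s)\in\bigl[\rho_{2}/(\rho_{2}-1),\,\rho_{1}/(\rho_{1}-1)\bigr]$ via Young's equality, is also correct.

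The gap is in the last step, where you assert that the same scaling argument "delivers (iii)". With the new indices, the lower index of $\widetilde{\Phi}$ is $\rho_{2}/(\rho_{2}-1)$ and the upper index is $\rho_{1}/(\rho_{1}-1)$, so the (ii)-type conclusion for $\delta=\|v\|_{\widetilde{\Phi},U}<1$ reads
\begin{equation*}
\|v\|_{\widetilde{\Phi},U}^{\frac{\rho_{1}}{\rho_{1}-1}}\;\leq\;\int_{U}\widetilde{\Phi}(|v(x)|)\,dx\;\leq\;\|v\|_{\widetilde{\Phi},U}^{\frac{\rho_{2}}{\rho_{2}-1}}.
\end{equation*}
This is not statement (iii) as printed: with the exponent $\rho_{1}/(\rho_{1}-1)$ your method produces the \emph{reverse} inequality, and in the stated direction it only gives the exponent $\rho_{2}/(\rho_{2}-1)$, which is weaker when $\|v\|_{\widetilde{\Phi},U}<1$; so the final sentence of your proof does not follow from what you established. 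Note that the inequality actually invoked later in the paper (in the proof of Proposition \ref{c3eq2}) is $\|v\|_{\widetilde{\Phi},U}^{\rho_{1}/(\rho_{1}-1)}\leq\int_{U}\widetilde{\Phi}(|v|)\,dx$, i.e. precisely the lower bound your scaling does yield, which strongly suggests the printed (iii) has the inequality reversed; but a correct submission must either prove the bound with exponent $\rho_{2}/(\rho_{2}-1)$, or explicitly point out and justify the corrected direction of (iii), rather than claim the two-sided estimate gives (iii) as stated.
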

Analogously, one can define the Orlicz-Sobolev function space as follows : 
\begin{equation*}
W^{1}L^{\Phi}(U) = \left\{ u\in L^{\Phi}(U) \,: \, \dfrac{\partial u}{\partial x_{i}} \in L^{\Phi}(U),\, 1 \leq i \leq d  \right\},
\end{equation*}
where derivatives are taken in the distributional sense on $U$. Endowed with the norm 
\begin{equation*}
\|u\|_{W^{1}L^{\Phi}(U)} = \|u\|_{L^{\Phi}(U)} + \sum_{i=1}^{d} \left\|\dfrac{\partial u}{\partial x_{i}}\right\|_{L^{\Phi}(U)}, \quad u \in W^{1}L^{\Phi}(U), 
\end{equation*}
$W^{1}L^{\Phi}(U)$ is a reflexive Banach space. If $\Phi \in \Delta_{2}$ and the boundary of $U$ is lipschitzian, then the canonical embedding $W^{1}L^{\Phi}(U) \subset L^{\Phi}(U)$ is compact \cite{adam}.
 We denote by $W^{1}_{0}L^{\Phi}(U)$, the closure of $\mathcal{D}(U)$ in $W^{1}L^{\Phi}(U)$ and the semi-norm 
\begin{equation*}
u \rightarrow \|u\|_{W^{1}_{0}L^{\Phi}(U)} = \|Du\|_{\Phi,U} = \sum_{i=1}^{d} \left\|\dfrac{\partial u}{\partial x_{i}}\right\|_{L^{\Phi}(U)}
\end{equation*}
is a norm on $W^{1}_{0}L^{\Phi}(U)$ equivalent to $\|\cdot\|_{W^{1}L^{\Phi}(U)}$. \\
By $W^{1}_{\#}L^{\Phi}(Y)$, we denote the space of functions $u \in W^{1}L^{\Phi}(U)$ such that 
\begin{equation*}
\int_{Y} u(y) dy = 0.
\end{equation*}
It is endowed with the gradient norm. \\
Given a function space $S$ defined in $Y$, $Z$ or $Y\times Z$, the subscript $_{per}$ stands for periodic, i.e., $S_{per}$ means that its elements are periodic in $Y$, $Z$ or $Y\times Z$, as it will be clear from the context. In particular, $\mathcal{C}_{per}(Y\times Z)$ denotes the space of periodic functions in $\mathcal{C}(\mathbb{R}^{d}_{y}\times \mathbb{R}^{d}_{z})$, i.e., 
\begin{equation*}
w(y+k, z+h) = w(y, z) \quad \textup{for} \, (y, z) \in \mathbb{R}^{d}\times \mathbb{R}^{d}\; \textup{and} \; (k, h) \in \mathbb{Z}^{d}\times \mathbb{Z}^{d}. 
\end{equation*}
\begin{equation*}
\mathcal{C}^{\infty}_{per}(Y\times Z) = \mathcal{C}_{per}(Y\times Z) \cap \mathcal{C}^{\infty}(\mathbb{R}^{d}_{y}\times \mathbb{R}^{d}_{z}),
\end{equation*}
and $L^{\Phi}_{per}(Y\times Z)$ is the space of $Y\times Z$-periodic functions in $L^{\Phi}_{loc}(\mathbb{R}^{d}_{y}\times \mathbb{R}^{d}_{z})$. In our subsequent analysis, we will denote by $L^{\Phi}(U; L^{\Phi}_{per}(Y))$ and $L^{\Phi}(U; L^{\Phi}_{per}(Y\times Z))$ the spaces of functions in $L^{\Phi}_{loc}(U\times Y)$ and $L^{\Phi}_{loc}(U\times \mathbb{R}^{d}_{y}\times\mathbb{R}^{d}_{z})$, respectively which are $Y$ and $Y\times Z$ periodic for a.e. $x\in U$, respectively and whose Luxemburg norm is finite in $U\times K$, with $K$ being any compact set in $Y$ and $Y\times Z$, respectively, in formulas 
\begin{equation*}
\begin{array}{l}
L^{\Phi}(U; L^{\Phi}_{per}(Y)) := \bigg\{ u \in L^{\Phi}_{loc}(U\times \mathbb{R}^{d}_{y}) : \, u(x, \cdot) \in L^{\Phi}_{per}(Y) \;\,
\textup{for\, a.e.}\, x\in U, \; \textup{and} \\
\qquad\qquad\qquad \qquad\qquad\qquad \iint_{U\times Y} \Phi(|u(x,y)|) dx dy < \infty  \bigg\},
\end{array}
\end{equation*}
\begin{equation*}
\begin{array}{l}
L^{\Phi}(U; L^{\Phi}_{per}(Y\times Z)) := \bigg\{ u \in L^{\Phi}_{loc}(U\times \mathbb{R}^{d}_{y}\times \mathbb{R}^{d}_{z}) : \, u(x, \cdot, \cdot) \in L^{\Phi}_{per}(Y\times Z) \;\,
\textup{for\, a.e.}\, x\in U, \; \textup{and} \\
 \qquad\qquad\qquad \qquad\qquad\qquad \iiint_{U\times Y\times Z} \Phi(|u(x,y,z)|) dx dy dz < \infty  \bigg\},
\end{array}
\end{equation*}
respectively. Furthrmore, we set
\begin{equation*}
L^{\Phi}_{D_{y}}(U; W^{1}_{\#}L^{\Phi}(Y)) := \left\{ u \in L^{1}(U; W^{1}_{\#}L^{\Phi}(Y)) \, :\, D_{y}u \in L^{\Phi}_{per}(U\times Y)^{d}  \right\},
\end{equation*}
\begin{equation*}
L^{\Phi}_{D_{z}}(U; L^{1}_{per}(Y;W^{1}_{\#}L^{\Phi}(Z))) := \left\{ u \in L^{1}(U;L^{1}_{per}(Y; W^{1}_{\#}L^{\Phi}(Z))) \, :\, D_{z}u \in L^{\Phi}_{per}(U\times Y\times Z)^{d}  \right\}.
\end{equation*}
 These spaces play an important role in the definition of reiterated two-scale convergence in the Orlicz setting.
In the next section, we are interested to the existence and uniqueness result for problem (\ref{c1eq2}).

\section{Existence and uniqueness result for (\ref{c1eq2})}\label{sect3}

This aim of this section consists in showing the existence and uniqueness of the solution to (\ref{c1eq2}), neglecting the periodicity assumption on $a$.
\par Let $\textbf{v}= (v_{i}) \in \mathcal{C}(\overline{Q};\mathbb{R})^{d} = \mathcal{C}(\overline{Q};\mathbb{R}) \times \cdots \times \mathcal{C}(\overline{Q};\mathbb{R})$ ($d$ times). One can check, using assumptions (\ref{c1eq3})-(\ref{c1eq1}), that the function $(x,t,y,\tau,z) \rightarrow a(y,\tau,z, \textbf{v}(x,t))$ belongs to $\mathcal{C}(\overline{Q}; L^{\infty}(\mathbb{R}^{d+1}_{y,\tau}; \mathcal{C}_{per}(Z)))^{d}$. Hence for each $\varepsilon>0$, the function $x\rightarrow a\left(\frac{x}{\varepsilon},\frac{t}{\varepsilon},\frac{x}{\varepsilon^{2}},\textbf{v}(x,t)\right)$ of $Q$ into $\mathbb{R}^{d}$, denoted by $a^{\varepsilon}(-,\cdot,\textbf{v})$, is well defined as an element of $L^{\infty}(Q;\mathbb{R})^{d}$ (see \cite{wou2}), and we have the following proposition and corollary.
\begin{prop}\label{c3eq2}
Let $a$ be the function in (\ref{c1eq2}). Assume that hypotheses (\ref{c1eq3})-(\ref{c1eq6}) hold. Then for each $\varepsilon>0$, the transformation $\textbf{v} \rightarrow a^{\varepsilon}(-,\cdot,\textbf{v})$ of $\mathcal{C}(\overline{Q};\mathbb{R})^{d}$ into $L^{\infty}(Q;\mathbb{R})^{d}$ extends by continuity to a mapping, still denoted by $\textbf{v} \rightarrow a^{\varepsilon}(-,\cdot,\textbf{v})$, of $L^{B}(Q;\mathbb{R})^{d}$	into $L^{\widetilde{B}}(Q;\mathbb{R})^{d}$. Moreover, for all $\textbf{v}, \textbf{w} \in L^{B}(Q;\mathbb{R})^{d}$, we have 
\begin{equation}\label{c3eq1}
\|a^{\varepsilon}(-,\cdot,\textbf{v}) - a^{\varepsilon}(-,\cdot,\textbf{w})\|_{L^{\widetilde{B}}(Q)^{d}} \leq c \|\textbf{v} - \textbf{w}\|^{\rho}_{L^{B}(Q)^{d}},
\end{equation}
where $\rho \in \{\rho_{1}, \rho_{2}, \rho_{1}-1, \frac{\rho_{2}}{\rho_{1}}(\rho_{1}-1) \}$, and $c= c(c_{0}, c_{1}, \rho) >0$.
\end{prop}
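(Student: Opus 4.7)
The plan is to derive a pointwise modular inequality from the structural condition (\ref{c1eq5})(i) and then translate it into a norm estimate via Lemma \ref{c2lem2}. Starting from
\[
|a(y,\tau,z,\lambda) - a(y,\tau,z,\lambda')| \leq c_0 \widetilde{B}^{-1}\bigl[B(c_1|\lambda - \lambda'|)\bigr],
\]
I would apply $\widetilde{B}$ to both sides, and use that both $B$ and $\widetilde{B}$ belong to $\Delta_2$ (the remark stated after the examples, which follows from (\ref{c1eq1})) to absorb the constants $c_0, c_1$ via $\widetilde{B}(c_0 s) \leq K_0 \widetilde{B}(s)$ and $B(c_1 s) \leq K_1 B(s)$, together with the identity $\widetilde{B}(\widetilde{B}^{-1}(B(\cdot))) = B(\cdot)$. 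This yields
\[
\widetilde{B}\bigl(|a(y,\tau,z,\lambda) - a(y,\tau,z,\lambda')|\bigr) \leq K\, B(|\lambda - \lambda'|) \qquad \textup{a.e.\ in } (y,\tau,z),
\]
for a constant $K = K(c_0, c_1)$ independent of $(\lambda,\lambda')$.

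Next, for $\mathbf{v},\mathbf{w} \in \mathcal{C}(\overline{Q};\mathbb{R})^d$ the substitution $(y,\tau,z) = (x/\varepsilon, t/\varepsilon, x/\varepsilon^2)$ is legitimate (as discussed in the paragraph preceding the proposition), so plugging in $\lambda = \mathbf{v}(x,t)$, $\lambda' = \mathbf{w}(x,t)$ and integrating over $Q$ gives the modular bound
\[
\int_{Q} \widetilde{B}\bigl(|a^{\varepsilon}(-,\cdot,\mathbf{v}) - a^{\varepsilon}(-,\cdot,\mathbf{w})|\bigr)\, dx\,dt \;\leq\; K \int_{Q} B(|\mathbf{v} - \mathbf{w}|)\, dx\,dt.
\]
Writing $I$ for the left-hand side and $J$ for the right-hand side, I would split into the four cases determined by whether $\|a^{\varepsilon}(-,\cdot,\mathbf{v}) - a^{\varepsilon}(-,\cdot,\mathbf{w})\|_{\widetilde{B},Q}$ and $\|\mathbf{v}-\mathbf{w}\|_{B,Q}$ are $\geq 1$ or $\leq 1$. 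In each case one combines a lower modular-to-norm estimate for $I$ (with Simonenko-type exponent either $\rho_2/(\rho_2-1)$ or $\rho_1/(\rho_1-1)$ for $\widetilde{B}$, inherited from (\ref{c1eq1})) with an upper modular-to-norm estimate for $J$ (exponent $\rho_1$ or $\rho_2$) from Lemma \ref{c2lem2}. After taking the appropriate root and simplifying ratios like $\rho_{2}/\!\left(\rho_1/(\rho_1-1)\right) = \frac{\rho_2}{\rho_1}(\rho_1-1)$ and $\rho_{1}/\!\left(\rho_1/(\rho_1-1)\right) = \rho_1-1$, one obtains exactly the exponents in the set $\{\rho_1, \rho_2, \rho_1-1, \frac{\rho_2}{\rho_1}(\rho_1-1)\}$.

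Finally, (\ref{c3eq1}) makes $\mathbf{v} \mapsto a^{\varepsilon}(-,\cdot,\mathbf{v})$ uniformly continuous on bounded subsets of $\mathcal{C}(\overline{Q};\mathbb{R})^d$ with the topology induced by $L^B(Q)^d$. Since $B \in \Delta_2$, the density of $\mathcal{D}(Q)$ in $L^B(Q)$ (property (i) following Lemma \ref{c2lem2}) implies that $\mathcal{C}(\overline{Q};\mathbb{R})^d$ is dense in $L^B(Q;\mathbb{R})^d$, so the mapping extends uniquely by continuity to $L^B(Q;\mathbb{R})^d \to L^{\widetilde{B}}(Q;\mathbb{R})^d$, and (\ref{c3eq1}) persists in the limit by a standard argument. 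The main obstacle is expected to be the careful bookkeeping across the four sub-cases so that the exponents fall exactly into the stated four-element set, and the verification that the modular control obtained for continuous arguments passes cleanly through the density extension.
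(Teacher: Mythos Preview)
Your overall strategy matches the paper's: obtain the pointwise bound from (\ref{c1eq5})(i), convert it to a modular inequality, and then pass to Luxemburg norms by a case analysis on whether the relevant norms exceed $1$. There is, however, a concrete bookkeeping gap in your exponent computation. In the case $\|a^\varepsilon(-,\cdot,\mathbf v)-a^\varepsilon(-,\cdot,\mathbf w)\|_{\widetilde B,Q}>1$, the Simonenko lower bound for $\widetilde B$ carries exponent $\rho_2/(\rho_2-1)$ (the smaller conjugate index), and the two resulting ratios are
\[
\frac{\rho_1}{\rho_2/(\rho_2-1)}=\frac{\rho_1(\rho_2-1)}{\rho_2}
\qquad\text{and}\qquad
\frac{\rho_2}{\rho_2/(\rho_2-1)}=\rho_2-1,
\]
neither of which lies in the set $\{\rho_1,\rho_2,\rho_1-1,\tfrac{\rho_2}{\rho_1}(\rho_1-1)\}$. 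You only worked out the two ratios with denominator $\rho_1/(\rho_1-1)$ and asserted the rest.

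The paper circumvents this by treating the ``large norm'' case differently: it first uses monotonicity of the Luxemburg norm under pointwise domination to pass to $c_0\,\|\widetilde B^{-1}(B(c_1|\mathbf v-\mathbf w|))\|_{\widetilde B,Q}$, and when this auxiliary norm exceeds $1$ it applies the elementary convexity estimate $\|g\|_{\widetilde B,Q}\le \int_Q \widetilde B(|g|)\,dx\,dt$ (valid whenever the right-hand side is $\ge 1$) rather than any Simonenko inequality. Since here $\widetilde B(|g|)=B(c_1|\mathbf v-\mathbf w|)$, this yields $\|a^\varepsilon(\ldots)\|_{\widetilde B,Q}\le c_0\int_Q B(c_1|\mathbf v-\mathbf w|)\,dx\,dt$, and Lemma~\ref{c2lem2}(i)--(ii) then produces precisely the exponents $\rho_1$ and $\rho_2$. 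Your four-way split can be repaired by making exactly this substitution in the two sub-cases with $\|a^\varepsilon(\ldots)\|_{\widetilde B,Q}>1$. As a secondary remark, the paper absorbs the constant $c_0$ via the assumption $\widetilde B\in\Delta'$ from (\ref{c1eq1}) (writing $\widetilde B(c_0 s)\le \eta\,\widetilde B(c_0)\,\widetilde B(s)$), not merely via $\Delta_2$; your $\Delta_2$ route is also legitimate, but it is a genuine difference in the mechanics.
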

\begin{proof}
	%The proof is entirely similar to the one of \cite[Proposition 1]{kenne1}.
	Arguing as in the proof of \cite[Proposition 1]{kenne1}. Putting 
	\begin{equation*}
	u(x,t,y,\tau,z) = c_{0} \widetilde{B}^{-1}(B(c_{1}|\textbf{v}(x,t)-\textbf{w}(x,t|)) - |a(y,\tau,z,\textbf{v}(x,t)) - a(y,\tau,z,\textbf{w}(x,t))|,
	\end{equation*}
	$\textbf{v}, \textbf{w} \in \mathcal{C}(\overline{Q};\mathbb{R}^{d})$, and for each $\varepsilon>0$, it follows by applying Lemma 2.2 in \cite{gabri2} that 
	\begin{equation*}
	|a^{\varepsilon}(-,\cdot,\textbf{v}) - a^{\varepsilon}(-,\cdot,\textbf{w})| \leq c_{0} \widetilde{B}^{-1}(B(c_{1}|\textbf{v}-\textbf{w}|)) \quad \textup{in} \;\; Q.
	\end{equation*}
	Since $\widetilde{B}$ is increasing, for $\delta>0$ we have 
	\begin{equation*}
	\int_{Q} \widetilde{B}\left(\dfrac{|a^{\varepsilon}(-,\cdot,\textbf{v}) - a^{\varepsilon}(-,\cdot,\textbf{w})|}{\delta}\right) dxdt \leq \int_{Q} \widetilde{B}\left(\dfrac{c_{0} \widetilde{B}^{-1}(B(c_{1}|\textbf{v}-\textbf{w}|))}{\delta}\right) dxdt.
	\end{equation*}
	If $\|a^{\varepsilon}(-,\cdot,\textbf{v}) - a^{\varepsilon}(-,\cdot,\textbf{w})\|_{\widetilde{B},Q}<1$, then the point (iii) of Lemma \ref{c2lem2} combined with the above inequality when $\delta=1$ and the fact that $\widetilde{B}\in \Delta'$ (with constant $\eta$) imply
	\begin{equation*}
	\begin{array}{rcl}
	 \|a^{\varepsilon}(-,\cdot,\textbf{v}) - a^{\varepsilon}(-,\cdot,\textbf{w})\|_{\widetilde{B},Q}^{\frac{\rho_{1}}{\rho_{1}-1}} & \leq & \int_{Q} \widetilde{B}\left(c_{0} \widetilde{B}^{-1}(B(c_{1}|\textbf{v}-\textbf{w}|))\right) dxdt \\
	  & \leq & \eta \widetilde{B}(c_{0}) \int_{Q} B\left(B(c_{1}|\textbf{v}-\textbf{w}|)\right) dxdt.
	\end{array}
	\end{equation*}
	Applying points (i)-(ii) of Lemma \ref{c2lem2} yields (with $\rho \in \{\rho_{1}-1, \frac{\rho_{2}}{\rho_{1}}(\rho_{1}-1)\}$) 
	\begin{equation*}
	\|a^{\varepsilon}(-,\cdot,\textbf{v}) - a^{\varepsilon}(-,\cdot,\textbf{w})\|_{\widetilde{B},Q} \leq \left[\eta \widetilde{B}(c_{0})\right]^{\frac{\rho_{1}-1}{\rho_{1}}} c_{1}^{\rho} \|\textbf{v}-\textbf{w}\|^{\rho}_{B,Q}.
	\end{equation*}
	Let us assume now that $\|a^{\varepsilon}(-,\cdot,\textbf{v}) - a^{\varepsilon}(-,\cdot,\textbf{w})\|_{\widetilde{B},Q} >1$. Since 
	\begin{equation*}
	\left\{\delta>0\, : \, \int_{Q} \widetilde{B}\left(\dfrac{c_{0} \widetilde{B}^{-1}(B(c_{1}|\textbf{v}-\textbf{w}|))}{\delta}\right) dxdt \leq 1  \right\}
	\end{equation*}
	is a subset of $\left\{\delta>0\, : \, \int_{Q} \widetilde{B}\left(\dfrac{|a^{\varepsilon}(-,\cdot,\textbf{v}) - a^{\varepsilon}(-,\cdot,\textbf{w})|}{\delta}\right) dxdt \leq 1  \right\}$, we get 
	\begin{equation*}
	\|a^{\varepsilon}(-,\cdot,\textbf{v}) - a^{\varepsilon}(-,\cdot,\textbf{w})\|_{\widetilde{B},Q} \leq c_{0} \|\widetilde{B}^{-1}(B(c_{1}|\textbf{v}-\textbf{w}|))\|_{\widetilde{B},Q}.
	\end{equation*}
	If $\|\widetilde{B}^{-1}(B(c_{1}|\textbf{v}-\textbf{w}|))\|_{\widetilde{B},Q} \leq 1$ then 
	\begin{equation*}
	\|\widetilde{B}^{-1}(B(c_{1}|\textbf{v}-\textbf{w}|))\|_{\widetilde{B},Q} \leq \left(\int_{Q} B(c_{1}|\textbf{v}-\textbf{w}|) dxdt\right)^{\frac{\rho_{1}-1}{\rho_{1}}} \leq c_{1}^{\rho} \|\textbf{v}-\textbf{w}\|^{\rho}_{B,Q},
	\end{equation*}
	where  $\rho \in \{ \rho_{1}-1, \frac{\rho_{2}}{\rho_{1}}(\rho_{1}-1) \}$. Suppose now $	\|\widetilde{B}^{-1}(B(c_{1}|\textbf{v}-\textbf{w}|))\|_{\widetilde{B},Q} >1$. Hence there is an integer $P_{0}$ such that $	\|\widetilde{B}^{-1}(B(c_{1}|\textbf{v}-\textbf{w}|))\|_{\widetilde{B},Q} - \frac{1}{n} >1$ for all integers $n\geq P_{0}$ and 
	\begin{equation*}
	1 < \int_{Q} \widetilde{B}\left(\left(\|\widetilde{B}^{-1}(B(c_{1}|\textbf{v}-\textbf{w}|))\|_{\widetilde{B},Q} - \frac{1}{n}\right)^{-1} \widetilde{B}^{-1}(B(c_{1}|\textbf{v}-\textbf{w}|)) \right) dxdt.
	\end{equation*}
	Using the convexity of $\widetilde{B}$ yields 
	\begin{equation*}
	\|\widetilde{B}^{-1}(B(c_{1}|\textbf{v}-\textbf{w}|))\|_{\widetilde{B},Q} - \frac{1}{n} < \int_{Q} B(c_{1}|\textbf{v}-\textbf{w}|) dxdt \quad \textup{for \, all}\; n\geq P_{0}.
	\end{equation*}
	Passing to the limit (as $n\to \infty$) and taking into account (i)-(ii) of Lemma \ref{c2lem2}, we are led to 
	\begin{equation*}
	\|\widetilde{B}^{-1}(B(c_{1}|\textbf{v}-\textbf{w}|))\|_{\widetilde{B},Q} \leq c_{1}^{\rho} \|\textbf{v}-\textbf{w}\|_{B,Q}^{\rho}, \quad \textup{with}\; \rho=\rho_{1}\, \textup{or} \, \rho_{2}.
	\end{equation*}
	Therefore (\ref{c3eq1}) follows for all $\textbf{v}, \textbf{w} \in \mathcal{C}(\overline{Q};\mathbb{R})^{d}$, then for all $\textbf{v}, \textbf{w} \in L^{B}(\overline{Q};\mathbb{R})^{d}$ by density and continuity.
\end{proof}

\begin{cor}\label{c3cor1}
	Let hypotheses be (\ref{c1eq3})-(\ref{c1eq6}). Given $v \in L^{B}(0,T ; W^{1}L^{B}(\Omega;\mathbb{R}))$, the function $(x,t)\rightarrow a\left(\frac{x}{\varepsilon}, \frac{t}{\varepsilon},\frac{x}{\varepsilon^{2}}, Dv(x,t)\right)$ from $Q$ into $\mathbb{R}^{d}$, denoted by $a^{\varepsilon}(-,\cdot,Dv)$, is well defined as an element $L^{\widetilde{B}}(Q;\mathbb{R})^{d}$. Moreover we have 
	\begin{equation}
	\begin{array}{l}
	a^{\varepsilon}(-,\cdot, \omega)=\omega \quad \textup{in}\, Q\, (\omega\, \textup{being the origin in} \in \mathbb{R}^{d}), \\
	
	\\
	\left[a^{\varepsilon}(-,\cdot, Dv) - a^{\varepsilon}(-,\cdot, Dw) \right]\cdot (Dv - Dw) \geq c_{2} B(|Dv - Dw|)\; \textup{in}\, Q 
	\end{array}
	\end{equation}
	for all $v, w \in L^{B}(0,T ; W^{1}L^{B}(\Omega;\mathbb{R}))$. 
\end{cor}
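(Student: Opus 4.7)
The plan is to deduce everything from Proposition \ref{c3eq2} combined with the pointwise structural assumptions on $a$. Since $v \in L^{B}(0,T;W^{1}L^{B}(\Omega;\mathbb{R}))$ just means $Dv \in L^{B}(Q;\mathbb{R})^{d}$, applying Proposition \ref{c3eq2} with $\mathbf{v}=Dv$ immediately gives that $a^{\varepsilon}(-,\cdot,Dv)$ is well defined as an element of $L^{\widetilde{B}}(Q;\mathbb{R})^{d}$. For the first identity $a^{\varepsilon}(-,\cdot,\omega)=\omega$, assumption (\ref{c1eq4}) gives $a(y,\tau,z,\omega)=\omega$ for a.e.\ $(y,\tau,z)$; since the exceptional set is $\mathbb{Z}^{d}\times\mathbb{Z}\times\mathbb{Z}^{d}$-periodic by (\ref{c1eq6})(i) and of measure zero, its pullback under $(x,t)\mapsto(x/\varepsilon,t/\varepsilon,x/\varepsilon^{2})$ remains a null set in $Q$, so the equality holds a.e.\ in $Q$.

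For the monotonicity inequality, I would argue in two stages. First take $\mathbf{v},\mathbf{w}\in \mathcal{C}(\overline{Q};\mathbb{R})^{d}$. The pointwise bound (\ref{c1eq5})(ii) applied with $\lambda=\mathbf{v}(x,t)$, $\lambda^{\prime}=\mathbf{w}(x,t)$, combined with the measurable-composition lemma (Lemma~2.2 of \cite{gabri2}) already invoked in the proof of Proposition \ref{c3eq2}, yields
\begin{equation*}
\left[a^{\varepsilon}(-,\cdot,\mathbf{v})-a^{\varepsilon}(-,\cdot,\mathbf{w})\right]\cdot(\mathbf{v}-\mathbf{w}) \geq c_{2}\,B(|\mathbf{v}-\mathbf{w}|) \quad \textup{in}\; Q.
\end{equation*}
To reach general $v,w\in L^{B}(0,T;W^{1}L^{B}(\Omega;\mathbb{R}))$, I approximate $Dv$ and $Dw$ in $L^{B}(Q;\mathbb{R})^{d}$ by sequences $\mathbf{v}_{n},\mathbf{w}_{n}\in \mathcal{C}(\overline{Q};\mathbb{R})^{d}$, available since $B\in\Delta_{2}$ makes $\mathcal{D}(Q)$ dense in $L^{B}(Q)$. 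By Proposition \ref{c3eq2}, $a^{\varepsilon}(-,\cdot,\mathbf{v}_{n})\to a^{\varepsilon}(-,\cdot,Dv)$ in $L^{\widetilde{B}}(Q;\mathbb{R})^{d}$, and similarly for $\mathbf{w}_{n}$. Passing to a common subsequence, all four convergences hold a.e.\ in $Q$, and the continuity of $B$ lets the pointwise inequality obtained in the smooth case pass to the limit a.e.\ in $Q$.

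The delicate point — and the one to watch — is the measurability/null-set issue when one moves from the pointwise hypothesis (\ref{c1eq5})(ii), which is stated for each \emph{fixed} pair $(\lambda,\lambda^{\prime})$ with an exceptional null set a priori depending on $(\lambda,\lambda^{\prime})$, to the same inequality with $\lambda,\lambda^{\prime}$ replaced by the measurable fields $\mathbf{v}(x,t),\mathbf{w}(x,t)$ and with $(y,\tau,z)$ realized as $(x/\varepsilon,t/\varepsilon,x/\varepsilon^{2})$. This is precisely what Lemma~2.2 of \cite{gabri2} is designed to handle, and it is what makes the first stage of the argument above rigorous; once that is secured, the density step to arbitrary $v,w$ is routine.
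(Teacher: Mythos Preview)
Your proposal is correct in outline and matches the natural route; the paper itself states the corollary without proof, treating it as an immediate consequence of Proposition~\ref{c3eq2} together with the pointwise hypotheses (\ref{c1eq4})--(\ref{c1eq5}). The density argument you give for the monotonicity inequality is exactly what is needed.

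There is one genuine gap, however, in your treatment of the identity $a^{\varepsilon}(-,\cdot,\omega)=\omega$. The claim that a periodic null set in $\mathbb{R}^{d}_{y}\times\mathbb{R}_{\tau}\times\mathbb{R}^{d}_{z}$ has null pullback under $(x,t)\mapsto(x/\varepsilon,t/\varepsilon,x/\varepsilon^{2})$ is false in general: for instance, with $d=1$ and $\varepsilon=1/2$, the set $N=\{(y,\tau,z):z-2y\in\mathbb{Z}\}$ is $\mathbb{Z}\times\mathbb{Z}\times\mathbb{Z}$-periodic and Lebesgue-null in $\mathbb{R}^{3}$, yet $(x/\varepsilon,t/\varepsilon,x/\varepsilon^{2})=(2x,2t,4x)\in N$ for \emph{every} $(x,t)$, so the pullback is all of $Q$. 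The remedy is simply to reuse the tool you already invoke for the monotonicity part: apply Lemma~2.2 of \cite{gabri2} to the nonnegative function $u(x,t,y,\tau,z)=|a(y,\tau,z,\omega)|$, which vanishes a.e.\ in $(y,\tau,z)$ by (\ref{c1eq4}), to conclude $|a^{\varepsilon}(-,\cdot,\omega)|=0$ a.e.\ in $Q$. Equivalently, observe that the trace operator $v\mapsto v^{\varepsilon}$ from $\mathcal{C}(\overline{Q};L^{\infty}(\mathbb{R}^{d+1}_{y,\tau}\times\mathbb{R}^{d}_{z}))$ to $L^{\infty}(Q)$ discussed in Section~\ref{sect4} is linear and well defined on equivalence classes, hence sends the zero class to zero.
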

\par Let $\theta \in \mathcal{D}(\mathbb{R})$ with $0\leq \theta \leq 1$, $\int \theta(t)dt=1$, $\theta$ supports in the closed interval $[-1,1]$. For each integer $n\geq 1$, we put $\theta_{n}(t) = n \theta(nt)$, $t\in\mathbb{R}$, so that the sequence $(\theta_{n})_{n\geq 1}$ is a mollifier on $\mathbb{R}$. For fixed $x\in\Omega$ and $\lambda\in\mathbb{R}^{d}$, let $\bar{a}^{\varepsilon}(x,\cdot,\lambda)$ and $\bar{f}(x,\cdot)$ be functions on $\mathbb{R}$ defined by 
\begin{equation*}
\bar{a}^{\varepsilon}_{i}(x,t,\lambda) = \left\{\begin{array}{l}
a_{i}\left(\frac{x}{\varepsilon},\frac{t}{\varepsilon},\frac{x}{\varepsilon^{2}}, \lambda\right)\;\; \textup{if}\, t\in (0,T) \\
0 \;\; \textup{if}\, t\in \mathbb{R}\backslash (0,T), 
\end{array} \right. \;\; \textup{and} \;\; 
\bar{f}(x,t) = \left\{\begin{array}{l}
f(x,t)\;\; \textup{if}\, t\in (0,T) \\
0 \;\; \textup{if}\, t\in \mathbb{R}\backslash (0,T). 
\end{array} \right.
\end{equation*}
For $n\geq 1$, let $a^{\varepsilon}_{in}(x,\cdot,\lambda)$, $1\leq i \leq d$, and $f_{n}(x,\cdot)$ be the restrictions in $(0,T)$ of functions 
\begin{equation*}
t \rightarrow \int \theta_{n}(\tau)\bar{a}_{i}^{\varepsilon}(x,t-\tau,\lambda) d\tau \;\; \textup{and} \;\; t \rightarrow \int \theta_{n}(\tau)\bar{f}(x,t-\tau) d\tau , \; \textup{(respectively)}.
\end{equation*}
Then $f_{n}(x,\cdot) \in \mathcal{C}([0,T])$ and $a^{\varepsilon}_{n}(x,\cdot,\lambda)= (a^{\varepsilon}_{in}(x,\cdot,\lambda)) \in \mathcal{C}([0,T])^{d}$. Next, it is obviuos that 
\begin{equation}\label{c2eq8}
\begin{array}{l}
a^{\varepsilon}_{n}(-,\cdot, \omega)=\omega, \\

\\
|a^{\varepsilon}_{n}(-,\cdot, \lambda) - a^{\varepsilon}_{n}(-,\cdot, \mu)|\leq c_{0} \widetilde{B}^{-1}(B(c_{1}|\lambda-\mu|)), \;\; \textup{a.e.\, in}\, Q  \\

\\
\left(a^{\varepsilon}(-,\cdot, \lambda) - a^{\varepsilon}(-,\cdot, \mu) \right)\cdot (\lambda - \mu) \geq c_{2} B(|\lambda-\mu|) 
\end{array}
\end{equation}
for all $\lambda,\mu \in \mathbb{R}^{d}$. Let us consider the problem 
\begin{equation}\label{c2eq9}
\left\{\begin{array}{l}
\dfrac{\partial u_{\varepsilon n}}{\partial t} - \textup{div}\, a^{\varepsilon}_{n}\left(-,\cdot, Du_{\varepsilon n}\right) = f_{n} \quad \textup{in}\; Q,  \\

\\
u_{\varepsilon n}(x,t) =0 \quad \textup{in} \; \partial\Omega\times (0,T), \\

\\
u_{\varepsilon n}(x,0)=0 \quad \textup{in}\; \Omega. 
\end{array} \right.
\end{equation}
Given $u\in L^{B}(0,T ; W^{1}L^{B}(\Omega,\mathbb{R}))$, the function $a^{\varepsilon}_{n}(-,\cdot, Du)$ belongs to $L^{\widetilde{B}}(Q;\mathbb{R})^{d}$. Next, by (\ref{c2eq8}) the transformation $u \rightarrow a^{\varepsilon}_{n}(-,\cdot, Du)$ from $L^{B}(0,T ; W^{1}L^{B}(\Omega,\mathbb{R}))$ into $L^{\widetilde{B}}(Q;\mathbb{R})^{d}$ is continuous with 
\begin{equation}\label{c2eq10}
\left\{\begin{array}{l}
\|a^{\varepsilon}_{n}(-,\cdot,Du) - a^{\varepsilon}_{n}(-,\cdot,Dv)\|_{L^{\widetilde{B}}(Q;\mathbb{R})^{d}} \leq c \|Du - Dv\|^{\rho}_{L^{B}(Q;\mathbb{R})^{d}},  \\

\\
\left[a^{\varepsilon}_{n}(-,\cdot, Du) - a^{\varepsilon}_{n}(-,\cdot, Dv) \right]\cdot (Du - Dv) \geq c_{2} B(|Du - Dv|)\; \textup{a.e.\, in}\, Q, 
\end{array} \right.
\end{equation}
for all $u, v\in L^{B}(0,T ; W^{1}L^{B}(\Omega,\mathbb{R}))$. Let $A^{\varepsilon}_{n}(t) : W^{1}L^{B}(\Omega,\mathbb{R}) \rightarrow W^{-1}L^{\widetilde{B}}(\Omega,\mathbb{R})$ be the operator defined by $A^{\varepsilon}_{n}(t)u = -\textup{div}a^{\varepsilon}_{n}(-,t, Du)$. Then (\ref{c2eq9}) is equivalent to the problem  
\begin{equation}\label{c2eq11}
\left\{\begin{array}{l}
u^{\prime}_{\varepsilon n}(t) + A_{n}^{\varepsilon}(t)u_{\varepsilon n}(t) = f_{n}(t), \quad 0< t\leq T, \\
u_{\varepsilon n}(0) = u_{0}.
\end{array}\right.
\end{equation}
For $V=W^{1}_{0}L^{B}(\Omega,\mathbb{R})$, $H=L^{2}(\Omega;\mathbb{R})$ and $A(t)=A_{n}^{\varepsilon}(t)$ $(0\leq t\leq T)$, the assumption (C) is satisfied (see \textbf{Appendix B}). Thanks to Theorem \ref{apenb1} we deduce the following one : 
\begin{thm}\label{c2eq12}
	For each $n\in \mathbb{N}^{\ast}$ and each $\varepsilon> 0$, there exists one and only one function $u_{\varepsilon n}$ from $[0,T]$ into $W^{1}_{0}L^{B}(\Omega,\mathbb{R})$ defined by (\ref{c2eq11}) and satisfying $u_{\varepsilon n} \in  L^{B}(0,T ; W^{1}L^{B}(\Omega;\mathbb{R})) \cap L^{\infty}(0,T ; L^{2}(\Omega,\mathbb{R}))$.
\end{thm}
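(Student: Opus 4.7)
The plan is to cast the regularized problem (\ref{c2eq11}) as an abstract evolution equation in the framework of Appendix B and invoke Theorem \ref{apenb1}; it then suffices to verify the conditions packaged under assumption (C) for the triple $(V,H,A^\varepsilon_n(\cdot))$ with $V = W^1_0L^B(\Omega,\mathbb{R})$ and $H = L^2(\Omega,\mathbb{R})$. Most of the ingredients are already in place: the pointwise bounds (\ref{c2eq8}) and the integral estimates (\ref{c2eq10}) inherited from Proposition \ref{c3eq2} and Corollary \ref{c3cor1} encode the structural properties of $A^\varepsilon_n(t)$, and the mollification in time is precisely what supplies the required regularity of $t\mapsto A^\varepsilon_n(t)u$.

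First I would check the functional setting. By (\ref{c1eq1}) one has $t^2\leq B(\rho_0 t)$, hence the continuous embedding $L^B(\Omega)\hookrightarrow L^2(\Omega)$ and therefore $V\hookrightarrow H$ continuously, with density coming from $\mathcal{D}(\Omega)\subset V$; the Gelfand triple $V\hookrightarrow H\equiv H'\hookrightarrow V'$ is thus in place. For each $t\in[0,T]$ and $u\in V$, Corollary \ref{c3cor1} gives $a^\varepsilon_n(-,t,Du)\in L^{\widetilde{B}}(\Omega,\mathbb{R})^d$, so that $A^\varepsilon_n(t)u = -\textup{div}\,a^\varepsilon_n(-,t,Du)$ is well defined as an element of $V'$, with growth controlled by $\|u\|_V$ via the Lipschitz-type bound in (\ref{c2eq8}), the normalization $a^\varepsilon_n(-,\cdot,\omega)=\omega$, and Lemma \ref{c2lem2}.

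The required structural properties of $A^\varepsilon_n(t)$ are then direct consequences of (\ref{c2eq8}) and (\ref{c2eq10}). Monotonicity is the third line of (\ref{c2eq8}) integrated over $\Omega$, giving $\langle A^\varepsilon_n(t)u-A^\varepsilon_n(t)v,u-v\rangle\geq c_2 \int_\Omega B(|Du-Dv|)\,dx$. Taking $v=0$ and using $a^\varepsilon_n(-,\cdot,\omega)=\omega$ yields $\langle A^\varepsilon_n(t)u,u\rangle\geq c_2\int_\Omega B(|Du|)\,dx$, which Lemma \ref{c2lem2} converts to a power of the $V$-norm and so provides coercivity with exponent $\rho_1>1$. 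Hemicontinuity, in fact full norm-continuity from $V$ into $V'$, follows from the second line of (\ref{c2eq8}) via the same Orlicz-duality argument as in the proof of Proposition \ref{c3eq2}.

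The principal subtlety, and the reason the problem has been mollified in time in the first place, is the $t$-dependence of $A^\varepsilon_n(\cdot)$: the raw coefficient $a(x/\varepsilon,t/\varepsilon,x/\varepsilon^2,\lambda)$ is only measurable in $t$, whereas assumption (C) demands strong measurability of $t\mapsto A^\varepsilon_n(t)u$ with values in $V'$. Here, for each $x\in\Omega$ and $\lambda\in\mathbb{R}^d$, the map $t\mapsto a^\varepsilon_n(x,t,\lambda)$ lies in $\mathcal{C}([0,T])^d$ by construction; combined with the uniform bound in (\ref{c2eq8}) and a dominated-convergence argument this upgrades $t\mapsto A^\varepsilon_n(t)u$ to a strongly continuous map from $[0,T]$ into $V'$ for every fixed $u\in V$, with coercivity and growth constants uniform in $t$. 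Since $u_0=0\in H$ and $f_n\in L^{\widetilde{B}}(0,T;V')$ (because $f_n(x,\cdot)\in\mathcal{C}([0,T])$), all components of (C) are verified and Theorem \ref{apenb1} produces a unique $u_{\varepsilon n}\in L^B(0,T;W^1_0L^B(\Omega))\cap L^\infty(0,T;L^2(\Omega))$ solving (\ref{c2eq11}), which is exactly the claim.
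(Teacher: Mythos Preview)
Your approach is exactly the paper's: set $V=W^1_0L^B(\Omega,\mathbb{R})$, $H=L^2(\Omega,\mathbb{R})$, $A(t)=A^\varepsilon_n(t)$, verify assumption (C) of Appendix B, and invoke Theorem \ref{apenb1}. The paper in fact dispatches this in a single sentence (``the assumption (C) is satisfied \dots\ Thanks to Theorem \ref{apenb1} we deduce \dots''), whereas you spell out the Gelfand triple, the monotonicity and coercivity from (\ref{c2eq8}), the growth estimate, and the crucial gain of $t$-continuity from the mollification---so your write-up is more detailed than the paper's own justification. One small remark: you do not explicitly address item (v) of (C) (weak sequential continuity of $v\mapsto A(\cdot)v(\cdot)$), but the paper does not either, and the remaining items you verify are the substantive ones.
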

\par Let us pass to the limit in (\ref{c2eq11}) when $n\to \infty$. Equation (\ref{c2eq11}) takes the variational formulation 
\begin{equation}\label{c2eq13}
\left(u^{\prime}_{\varepsilon n}(t), v\right) + \int_{\Omega} a^{\varepsilon}_{n}(-,t, Du_{\varepsilon n})\cdot Dv\, dx = \left(f_{n}(t), v\right), 
\end{equation}
with $0< t\leq T$, for all $v\in W^{1}_{0}L^{B}(\Omega;\mathbb{R})$. Letting $v = u_{\varepsilon n}(t)$, $0< t< T$, and arguing as in \cite[Page 220]{kenne1} we have 
\begin{equation}\label{c2eq14}
\|u_{\varepsilon n}(t)\|^{2}_{L^{2}(\Omega)} + c \int_{0}^{T} B\left(\|Du_{\varepsilon n}(t)\|_{L^{B}(\Omega)^{d}}\right) dt \leq \int_{0}^{T} \widetilde{B}\left(\gamma\|f_{n}(t)\|_{W^{-1}L^{\widetilde{B}}(\Omega;\mathbb{R})}\right) dt,
\end{equation} 
with $\gamma= \frac{2}{c}$ if $0<c <1$ and $\gamma=2c$ if $c\geq 1$.\\
Recalling that there is an integer $P\geq 1$ such that $\|f_{n} - f\|_{L^{\widetilde{B}}(0,T ; W^{-1}L^{\widetilde{B}}(\Omega;\mathbb{R}))} \leq 1$ for all integer $n\geq P$; we are derived to 
\begin{equation*}
\sup_{n\in \mathbb{N}}\left(\|u_{\varepsilon n}\|_{L^{\infty}(0,T ; L^{2}(\Omega))}\right) <\infty \;\; \textup{and} \;\; \sup_{n\in \mathbb{N}}\left(\|Du_{\varepsilon n}\|_{L^{B}(0,T ; L^{B}(\Omega))^{d}}\right) <\infty.
\end{equation*}
There exists a subsequence $(u_{\varepsilon k_{n}})_{n\in \mathbb{N}^{\ast}}$ of $(u_{\varepsilon n})_{n\in \mathbb{N}^{\ast}}$ such that, as $n\rightarrow \infty$,
\begin{equation}\label{c2eq15}
\left\{\begin{array}{l}
u_{\varepsilon k_{n}} \rightarrow u_{\varepsilon} \;\; \textup{in} \; L^{\infty}(0,T ; L^{2}(\Omega))-\textup{weak}^{\ast},  \\

\\
\dfrac{\partial u_{\varepsilon k_{n}}}{\partial x_{i}} \rightarrow \dfrac{\partial u_{\varepsilon}}{\partial x_{i}} \;\; \textup{in} \; L^{B}(Q)-\textup{weak} \; (1\leq i\leq d).
\end{array}\right.
\end{equation}
This being so, there is no difficulty in proving that $a^{\varepsilon}_{n}(-,\cdot, Du_{\varepsilon}) \rightarrow a^{\varepsilon}(-,\cdot, Du_{\varepsilon})$ in $L^{\widetilde{B}}(Q)^{d}$ when $n\rightarrow \infty$. Besides, according to (\ref{c2eq10}) and (\ref{c2eq14}), $a^{\varepsilon}(-,\cdot, Du_{\varepsilon k_{n}}) \rightarrow a^{\varepsilon}(-,\cdot, Du_{\varepsilon})$ in $L^{\widetilde{B}}(Q)^{d}-\textup{weak}$ when $n\to \infty$. Therefore, fixing $\varepsilon>0$, $v \in W^{1}_{0}L^{B}(\Omega;\mathbb{R})$ and $\varphi$ in $\mathcal{D}(0,T)$ with $\|v\varphi\|_{L^{B}(0,T ; W^{1}_{0}L^{B}(\Omega))} >0$, there exists $P\in\mathbb{N}$ such that 
\begin{equation*}
\|a^{\varepsilon}_{k_{n}}(-,\cdot, Du_{\varepsilon}) - a^{\varepsilon}(-,\cdot, Du_{\varepsilon})\|_{L^{\widetilde{B}}(Q)^{d}} \leq \dfrac{\varepsilon}{4 \|v\varphi\|_{L^{B}(0,T ; W^{1}_{0}L^{B}(\Omega))}} \;\; \textup{and}
\end{equation*}
\begin{equation*}
\left|\int_{Q}\left(a^{\varepsilon}(-,\cdot, Du_{\varepsilon k_{n}}) - a^{\varepsilon}(-,\cdot, Du_{\varepsilon})\right)\cdot Dv\varphi\, dxdt\right| < \dfrac{\varepsilon}{2}
\end{equation*}
for all integer $n\geq P$. Since 
\begin{equation*}
\begin{array}{l}
\left|\int_{Q}\left(a^{\varepsilon}_{k_{n}}(-,\cdot, Du_{\varepsilon k_{n}}) - a^{\varepsilon}(-,\cdot, Du_{\varepsilon})\right)\cdot Dv\varphi\, dxdt\right| \\  
\quad \leq \int_{-1}^{1} \left(\theta_{k_{n}}(\tau)\left|\int_{Q}\left(a^{\varepsilon}(-,t-\tau, Du_{\varepsilon k_{n}}) - a^{\varepsilon}(-,t-\tau, Du_{\varepsilon})\right)\cdot Dv\varphi\, dxdt\right| \right) d\tau  \\
\quad\quad + \left|\int_{Q}\left(a^{\varepsilon}_{k_{n}}(-,\cdot, Du_{\varepsilon}) - a^{\varepsilon}(-,\cdot, Du_{\varepsilon})\right)\cdot Dv\varphi\, dxdt\right|  \\
\quad \leq \int_{-1}^{1} \left|\int_{Q}\left(a^{\varepsilon}(-,t-\tau, Du_{\varepsilon k_{n}}) - a^{\varepsilon}(-,t-\tau, Du_{\varepsilon})\right)\cdot Dv\varphi\, dxdt\right|  d\tau  \\
\quad\quad + 2 \|a^{\varepsilon}_{k_{n}}(-,\cdot, Du_{\varepsilon}) - a^{\varepsilon}(-,\cdot, Du_{\varepsilon})\|_{L^{\widetilde{B}}(Q)^{d}} \|v\varphi\|_{L^{B}(0,T ; W^{1}_{0}L^{B}(\Omega))}, 
\end{array}
\end{equation*}
and $a^{\varepsilon}(-,t-\tau, Du_{\varepsilon})=a^{\varepsilon}(-,t-\tau, Du_{\varepsilon k_{n}})=0$ if $t-\tau \notin (0,T)$, we are led to 
\begin{equation*}
\left|\int_{Q}\left(a^{\varepsilon}_{k_{n}}(-,\cdot, Du_{\varepsilon k_{n}}) - a^{\varepsilon}(-,\cdot, Du_{\varepsilon})\right)\cdot Dv\varphi\, dxdt\right| \leq \varepsilon \;\; \textup{for \, all\, integer}\; n\geq P.
\end{equation*}
Therefore, as $n\to \infty$, we have 
\begin{equation}\label{c2eq16}
a^{\varepsilon}_{k_{n}}(-,\cdot, Du_{\varepsilon k_{n}}) \rightarrow a^{\varepsilon}(-,\cdot, Du_{\varepsilon})\;\, \textup{in} \; L^{\widetilde{B}}(Q)^{d}-\textup{weak}.
\end{equation}
Multiplying (\ref{c2eq13}) by $\varphi(t)$, $0< t< T$, $\varphi \in \mathcal{D}(0,T)$, and integrating from $0$ to $T$ (where $n$ is replaced by $k_{n}$), it follows
\begin{equation*}
-\int_{Q}u_{\varepsilon k_{n}}(t) v \varphi^{\prime}(t)\, dxdt + \int_{Q} a^{\varepsilon}_{k_{n}}\left(-,t, Du_{\varepsilon k_{n}}(t)\right)\cdot Dv\varphi(t)\, dxdt = \int_{Q}f_{k_{n}}(t), v\varphi(t)\, dxdt.
\end{equation*}
Passing to the limit, as $n\to \infty$, and using (\ref{c2eq15})-(\ref{c2eq16}) yields 
\begin{equation*}
-\int_{Q}u_{\varepsilon}(t) v \varphi^{\prime}(t)\, dxdt + \int_{Q} a^{\varepsilon}\left(-,t, Du_{\varepsilon}(t)\right)\cdot Dv\varphi(t)\, dxdt = \int_{Q}f(t)v\varphi(t)\, dxdt,
\end{equation*}
which drives us to 
\begin{equation}\label{c2eq17}
u^{\prime}_{\varepsilon}(t) + A^{\varepsilon}(t)u_{\varepsilon}(t) = f(t), \quad 0< t\leq T,
\end{equation}
where $A^{\varepsilon}(t)u = -\textup{div} a^{\varepsilon}(-,t, Du)$. It is obvious to prove that 
\begin{equation}\label{c2eq18}
u_{\varepsilon}(0)=0,
\end{equation}
and (\ref{c2eq17})-(\ref{c2eq18}) are equivalent to (\ref{c1eq2}). Assuming that $f=0$, it follows by (\ref{apenb3}) that $\|u^{\prime}_{\varepsilon}(t)\|_{L^{2}(\Omega)} \leq 0$ a.e. in $(0,T)$, that is, $u_{\varepsilon}=0$. Clearly, we have the following 
\begin{thm}\label{c3eq3}
	For each $\varepsilon>0$, there exists a unique function $u_{\varepsilon}$ from $[0,T]$ into $W^{1}_{0}L^{B}(\Omega;\mathbb{R})$ defined by (\ref{c1eq2}) and satisfying $u \in \mathcal{W}(0,T ; W^{1}_{0}L^{B}(\Omega;\mathbb{R})) \cap L^{\infty}(0,T ; L^{2}(\Omega;\mathbb{R}))$.
\end{thm}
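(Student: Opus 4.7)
The plan is to approximate problem (\ref{c1eq2}) by a family of problems in which the coefficient $a^{\varepsilon}$ is regularized in the time variable by convolution with a standard mollifier $\theta_{n}$, producing $a^{\varepsilon}_{n}(-,\cdot,\lambda)$ that is continuous in $t$. For each fixed $n$ and $\varepsilon$, the monotonicity and continuity properties in (\ref{c2eq8})--(\ref{c2eq10}) (inherited from (\ref{c1eq5}) via Corollary \ref{c3cor1}) allow one to define the operator $A^{\varepsilon}_{n}(t)u=-\operatorname{div} a^{\varepsilon}_{n}(-,t,Du)$ from $V=W^{1}_{0}L^{B}(\Omega)$ to $V'=W^{-1}L^{\widetilde{B}}(\Omega)$ and to verify the abstract hypothesis (C) in Appendix B. Theorem \ref{apenb1} then yields Theorem \ref{c2eq12}, i.e. a unique $u_{\varepsilon n}\in L^{B}(0,T;W^{1}_{0}L^{B}(\Omega))\cap L^{\infty}(0,T;L^{2}(\Omega))$ solving (\ref{c2eq11}).

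Next, I would test (\ref{c2eq13}) against $v=u_{\varepsilon n}(t)$, use the coercivity bound $a^{\varepsilon}_{n}(-,\cdot,\lambda)\cdot\lambda\geq c_{2}B(|\lambda|)$ coming from (ii) of (\ref{c1eq5}), the fact that $a^{\varepsilon}_{n}(-,\cdot,\omega)=\omega$, and Young's inequality on the right-hand side to produce the energy estimate (\ref{c2eq14}). Since the mollification is uniformly bounded on $L^{\widetilde{B}}(0,T;W^{-1}L^{\widetilde{B}}(\Omega))$, one obtains uniform-in-$n$ bounds on $u_{\varepsilon n}$ in $L^{\infty}(0,T;L^{2}(\Omega))$ and on $Du_{\varepsilon n}$ in $L^{B}(Q)^{d}$, using Lemma \ref{c2lem2} to pass from modular to norm estimates. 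Reflexivity of $L^{B}(Q)$ (since $B,\widetilde{B}\in\Delta_{2}$ by (\ref{c1eq1})) then yields, up to a subsequence, the convergences (\ref{c2eq15}) and, by Proposition \ref{c3eq2}, a weak limit $\chi$ of $a^{\varepsilon}_{k_{n}}(-,\cdot,Du_{\varepsilon k_{n}})$ in $L^{\widetilde{B}}(Q)^{d}$.

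The principal obstacle is identifying $\chi=a^{\varepsilon}(-,\cdot,Du_{\varepsilon})$, since $a$ is nonlinear and $Du_{\varepsilon k_{n}}$ only converges weakly. I would handle this in two stages, as indicated in the text preceding the theorem: first show the strong convergence $a^{\varepsilon}_{n}(-,\cdot,Du_{\varepsilon})\to a^{\varepsilon}(-,\cdot,Du_{\varepsilon})$ in $L^{\widetilde{B}}(Q)^{d}$ directly from the mollifier properties and the continuity given by Proposition \ref{c3eq2}; then bound the difference $a^{\varepsilon}_{k_{n}}(-,\cdot,Du_{\varepsilon k_{n}})-a^{\varepsilon}(-,\cdot,Du_{\varepsilon})$ against a test function $v\varphi$ by splitting it through the intermediate term $a^{\varepsilon}(-,t-\tau,Du_{\varepsilon})$ and using the generalized H\"older inequality (\ref{c2eq19}), as displayed just before (\ref{c2eq16}). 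Multiplying (\ref{c2eq13}) against $v\varphi$ with $v\in W^{1}_{0}L^{B}(\Omega)$ and $\varphi\in\mathcal{D}(0,T)$ and passing to the limit then gives (\ref{c2eq17}); the initial condition (\ref{c2eq18}) follows from the compactness/trace results of the appendix applied to $\mathcal{W}(0,T;W^{1}_{0}L^{B}(\Omega))$.

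For uniqueness, I would subtract the equations satisfied by two solutions $u_{\varepsilon}$ and $\tilde u_{\varepsilon}$, test with $u_{\varepsilon}-\tilde u_{\varepsilon}$, and invoke the monotonicity (ii) of (\ref{c1eq5}) together with the integration-by-parts identity from Appendix B (the computation alluded to by (\ref{apenb3})) to deduce $\|u_{\varepsilon}(t)-\tilde u_{\varepsilon}(t)\|_{L^{2}(\Omega)}^{2}\leq 0$ for a.e.\ $t$. This, together with the regularity $u_{\varepsilon}\in\mathcal{W}(0,T;W^{1}_{0}L^{B}(\Omega))\cap L^{\infty}(0,T;L^{2}(\Omega))$ already obtained in the limiting step, closes the proof.
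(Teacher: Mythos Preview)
Your proposal is correct and follows essentially the same route as the paper: mollification in time to obtain the regularized problems (\ref{c2eq9}), application of the abstract Theorem \ref{apenb1} to get $u_{\varepsilon n}$, the energy estimate (\ref{c2eq14}) yielding uniform bounds, weak compactness (\ref{c2eq15}), the identification of the nonlinear limit via the splitting through $a^{\varepsilon}_{k_n}(-,\cdot,Du_{\varepsilon})$ displayed just before (\ref{c2eq16}), and uniqueness by monotonicity and (\ref{apenb3}). The only minor remark is that your phrase ``splitting it through the intermediate term $a^{\varepsilon}(-,t-\tau,Du_{\varepsilon})$'' slightly misidentifies the pivot: the paper first inserts $a^{\varepsilon}_{k_n}(-,\cdot,Du_{\varepsilon})$ and only afterwards unfolds the convolution to bring out $a^{\varepsilon}(-,t-\tau,\cdot)$, but this is exactly the computation you point to.
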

\begin{rem}\label{c3rem1}
	We denote by $\mathcal{W}(0,T ; W^{1}_{0}L^{B}(\Omega;\mathbb{R}))$ the Banach space of functions $v \in L^{B}(0,T ; W^{1}_{0}L^{B}(\Omega;\mathbb{R}))$ such that $v^{\prime} \in  L^{\widetilde{B}}(0,T ; W^{-1}L^{\widetilde{B}}(\Omega;\mathbb{R}))$ for the norm $v \rightarrow \|v\|_{ L^{B}(0,T ; W^{1}_{0}L^{B}(\Omega;\mathbb{R}))} + \|v^{\prime}\|_{L^{\widetilde{B}}(0,T ; W^{-1}L^{\widetilde{B}}(\Omega;\mathbb{R}))}$. By (\ref{c1eq1}) the embedding $\mathcal{W}(0,T ; W^{1}_{0}L^{B}(\Omega;\mathbb{R})) \subset \mathcal{C}([0,T] ; L^{2}(\Omega;\mathbb{R}))$ is continuous.
\end{rem}
\par The reiterated periodic homogenization of problems (\ref{c1eq2}) amounts to find a homogenized problem, by using the reiterated two-scale convergence method (see \cite{tacha3,tacha4}), such that the sequence of solutions $u_{\varepsilon}$ converges to a limit $\textbf{u}$, which is precisely the solution of the homogenized problem. For this purpose, we need to recall the notion of reiterated two-scale convergence in $L^{B}(Q)$.

\section{Fundamentals of reiterated periodic homogenization}\label{sect4}

In all that follows, $B$ is the $N$-function in (\ref{c1eq1}), $Y=(0,1)^{d}$, $Z=(0,1)^{d}$, $\varTheta=(0,1)$ and $\Gamma=Y\times \varTheta$. The letter $\varepsilon$ throughout will denote a family of positive real numbers admitting $0$ as an accumulation point. When $\varepsilon=(\varepsilon_{n})_{n\in\mathbb{N}}$ with $0< \varepsilon_{n}\leq 1$ and $\varepsilon_{n}\rightarrow 0$ as $n\rightarrow\infty$, we will refer to $\varepsilon$ as a fundamental sequence. We put 
\begin{equation*}
\mathcal{C}_{per}(\Gamma\times Z) = \left\{v\in \mathcal{C}(\mathbb{R}^{d+1}_{y,\tau}\times\mathbb{R}^{d}_{z}) \, :\, v \,\textup{is}\, \Gamma\times Z-\textup{periodic} \right\},
\end{equation*}
\begin{equation*}
L^{B}_{per}(\Gamma\times Z) = \left\{v\in L^{B}_{loc}(\mathbb{R}^{d+1}_{y,\tau}\times\mathbb{R}^{d}_{z}) \, :\, v \,\textup{is}\, \Gamma\times Z-\textup{periodic} \right\},
\end{equation*}
then $L^{B}_{per}(\Gamma\times Z)$ is a Banach space under the Luxemburg norm $\|\cdot\|_{B,\Gamma\times Z}$, and $\mathcal{C}_{per}(\Gamma\times Z)$ is dense in $L^{B}_{per}(\Gamma\times Z)$. For $v\in L^{B}_{per}(\Gamma\times Z)$, letting 
\begin{equation*}
v^{\varepsilon}(x,t)= v\left(\dfrac{x}{\varepsilon},\dfrac{t}{\varepsilon},\dfrac{x}{\varepsilon^{2}}\right) \quad (x,t) \in \mathbb{R}^{d}\times\mathbb{R},
\end{equation*}
we have $v^{\varepsilon}\rightarrow \iint_{\Gamma\times Z} v(y,\tau,z) dyd\tau dz$ in $L^{B}(Q)$-weak as $\varepsilon\to 0$. Given
$v\in L^{B}_{loc}(Q\times \mathbb{R}^{d+1}_{y,\tau}\times\mathbb{R}^{d}_{z})$ and $\varepsilon>0$, we put  
\begin{equation}\label{c4eq1}
v^{\varepsilon}(x,t)= v\left(x,t,\dfrac{x}{\varepsilon},\dfrac{t}{\varepsilon},\dfrac{x}{\varepsilon^{2}}\right) \quad (x,t) \in Q,
\end{equation}
when $v\in \mathcal{C}(Q\times \mathbb{R}^{d+1}_{y,\tau}\times\mathbb{R}^{d}_{z})$. Since $\mathcal{C}(\overline{Q}; \mathcal{C}_{b}(\mathbb{R}^{d+1}_{y,\tau}\times\mathbb{R}^{d}_{z}))$ will legitimately be viewed as a subspace of $\mathcal{C}(Q\times \mathbb{R}^{d+1}_{y,\tau}\times\mathbb{R}^{d}_{z})$ (see \textbf{Apendix A}), as in \cite{tacha3} given $v\in \mathcal{C}(\overline{Q}; \mathcal{C}_{b}(\mathbb{R}^{d+1}_{y,\tau}\times\mathbb{R}^{d}_{z}))$, the trace $(x,t)\rightarrow v\left(x,t,\dfrac{x}{\varepsilon},\dfrac{t}{\varepsilon},\dfrac{x}{\varepsilon^{2}}\right)$ denoted by $v^{\varepsilon}$, is an element of $\mathcal{C}_{b}(Q)$. Moreover the transformation $v\rightarrow v^{\varepsilon}$ of $\mathcal{C}(\overline{Q}; \mathcal{C}_{b}(\mathbb{R}^{d+1}_{y,\tau}\times\mathbb{R}^{d}_{z}))$ into $\mathcal{C}_{b}(Q)$ extends by continuity to a unique linear mapping, still denoted $v\rightarrow v^{\varepsilon}$, of $L^{B}(Q; \mathcal{C}_{b}(\mathbb{R}^{d+1}_{y,\tau}\times\mathbb{R}^{d}_{z}))$  into $L^{B}(Q)$ with $\|v^{\varepsilon}\|_{B,Q} \leq \|v\|_{L^{B}(Q; \mathcal{C}_{b}(\mathbb{R}^{d+1}_{y,\tau}\times\mathbb{R}^{d}_{z}))}$ for all $v \in L^{B}(Q; \mathcal{C}_{b}(\mathbb{R}^{d+1}_{y,\tau}\times\mathbb{R}^{d}_{z}))$. 
\par The right-hand side of (\ref{c4eq1}) also makes sense when $v \in \mathcal{C}(\overline{Q}; L^{\infty}(\mathbb{R}^{d+1}_{y,\tau}\times\mathbb{R}^{d}_{z}))$. Moreover the mapping $v\rightarrow v^{\varepsilon}$ sends linearly and continuously $\mathcal{C}(\overline{Q}; L^{\infty}(\mathbb{R}^{d+1}_{y,\tau}\times\mathbb{R}^{d}_{z}))$ to $L^{\infty}(Q)$ with 
$\|v^{\varepsilon}\|_{L^{\infty}(Q)} \leq \sup_{(x,t)\in \overline{Q}} \|v(x,t)\|_{L^{\infty}(\mathbb{R}^{d})}$ for all $v\in \mathcal{C}(\overline{Q}; L^{\infty}(\mathbb{R}^{d+1}_{y,\tau}\times\mathbb{R}^{d}_{z}))$. \\
Recalling the spaces introduced in \textbf{Appendix A}, we start by defining reiterated two-scale convergence.
\begin{defn}
	A sequence of functions $(u_{\varepsilon})_{\varepsilon}\subset L^{B}(Q)$ is said to be :
	\begin{itemize}
		\item[(i)] weakly reiteratively two-scale convergent in $L^{B}(Q)$ to some $u_{0}\in L^{B}(Q; L^{B}_{per}(\Gamma\times Z))$ if, as $\varepsilon\to 0$, we have
		\begin{equation}\label{c4eq2}
		\int_{Q} u_{\varepsilon}\varphi^{\varepsilon} dxdt \rightarrow \iiint_{Q\times \Gamma\times Z} u_{0}\varphi dxdt dyd\tau dz
		\end{equation}
		for every $\varphi \in L^{\widetilde{B}}(Q; \mathcal{C}_{per}(\Gamma\times Z))$, where $\varphi^{\varepsilon}$ is defined as in (\ref{c4eq1}); 
		\item[(ii)] strongly reiteratively two-scale convergent in $L^{B}(Q)$ to some $u_{0}\in L^{B}(Q; L^{B}_{per}(\Gamma\times Z))$ if for $\eta>0$ and $\varphi \in L^{\widetilde{B}}(Q; \mathcal{C}_{per}(\Gamma\times Z))$ verifying $\|u_{0}-\varphi\|_{B,Q\times\Gamma\times Z} \leq \dfrac{\eta}{2}$ there exists $\rho>0$ such that 
		\begin{equation}\label{c4eq4}
		\|u_{\varepsilon}-\varphi^{\varepsilon}\|_{B,Q} \leq \eta\;\; \; \textup{for\, all}\; 0<\varepsilon\leq \rho.
		\end{equation}
	\end{itemize}
\end{defn}
When (\ref{c4eq2}) happens, we denote it by ``$u_{\varepsilon}\rightharpoonup u_{0}$ in $L^{B}(Q)$-weakly reiteratively 2s" and we say that $u_{0}$ is the weak reiterated two-scale limit in $L^{B}(Q)$ of the sequence $(u_{\varepsilon})_{\varepsilon}$. Also, when (\ref{c4eq4}) happens, we denote it by ``$u_{\varepsilon}\rightarrow u_{0}$ in $L^{B}(Q)$-strongly reiteratively 2s". 
\begin{rem}
	The above definition is given in the scalar setting, but it extends to vector valued functions, arguing in components.
\end{rem}
Since $B\in \Delta_{2}$, we have the following results whose proofs are an adaptation of those of \cite{tacha3}.
\begin{prop}
	If $u \in L^{B}(Q; \mathcal{C}_{per}(\Gamma\times Z))$, then (with the notation in (\ref{c4eq1})) $u^{\varepsilon}\rightharpoonup u$ in $L^{B}(Q)$-weakly reiteratively 2s, and we have 
	\begin{equation*}
	\lim_{\varepsilon\to 0} \|u^{\varepsilon}\|_{B,Q} = \|u\|_{B,Q\times\Gamma\times Z}.
	\end{equation*}
\end{prop}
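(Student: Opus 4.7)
The plan is to prove the two assertions — weak reiterated two-scale convergence of $u^{\varepsilon}$ to $u$, and the convergence of the Luxemburg norms — by reducing both to one common tool, namely the three-scale periodic averaging lemma. This lemma asserts that for any $v\in\mathcal{C}(\overline{Q};\mathcal{C}_{per}(\Gamma\times Z))$ one has
\[
\int_Q v^{\varepsilon}\,dx\,dt\;\longrightarrow\;\iiint_{Q\times\Gamma\times Z} v(x,t,y,\tau,z)\,dx\,dt\,dy\,d\tau\,dz
\]
as $\varepsilon\to 0$. It is the three-scale analogue of Nguetseng's classical averaging result and is the analytic core of \cite{tacha3}; its proof reduces, via density of trigonometric polynomials in $\mathcal{C}_{per}(\Gamma\times Z)$, to an elementary Riemann--Lebesgue estimate at the two scales $\varepsilon$ and $\varepsilon^{2}$.

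For the weak two-scale statement, I would first treat the case $u,\varphi\in\mathcal{C}(\overline{Q};\mathcal{C}_{per}(\Gamma\times Z))$: since $u\varphi$ lies in the same class, the averaging lemma delivers
\[
\int_Q u^{\varepsilon}\varphi^{\varepsilon}\,dx\,dt=\int_Q (u\varphi)^{\varepsilon}\,dx\,dt\;\longrightarrow\;\iiint_{Q\times\Gamma\times Z} u\varphi\,dx\,dt\,dy\,d\tau\,dz.
\]
To cover arbitrary $u\in L^{B}(Q;\mathcal{C}_{per}(\Gamma\times Z))$ and $\varphi\in L^{\widetilde{B}}(Q;\mathcal{C}_{per}(\Gamma\times Z))$, I would combine density of $\mathcal{C}(\overline{Q};\mathcal{C}_{per}(\Gamma\times Z))$ in those ambient Orlicz spaces, which holds thanks to $B,\widetilde{B}\in\Delta_{2}$ (from (\ref{c1eq1}) via the remark following it), with the uniform continuity of the trace map $v\mapsto v^{\varepsilon}$ from $L^{B}(Q;\mathcal{C}_{b}(\mathbb{R}^{d+1}_{y,\tau}\times\mathbb{R}^{d}_{z}))$ into $L^{B}(Q)$ recalled in Section~\ref{sect4}. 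The standard three-term splitting around smooth approximants $u_{\eta},\varphi_{\eta}$, together with the generalized H\"older inequality (\ref{c2eq19}), controls the error uniformly in $\varepsilon$ and yields the full statement.

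For the norm statement, I would apply the averaging lemma to the scalar integrand $w_{\delta}(x,t,y,\tau,z):=B(|u(x,t,y,\tau,z)|/\delta)$, $\delta>0$. Continuity and periodicity of $u(x,t,\cdot,\cdot,\cdot)$ together with $B\in\Delta_{2}$ ensure $w_{\delta}\in L^{1}(Q;\mathcal{C}_{per}(\Gamma\times Z))$, so the lemma gives the modular convergence
\[
\int_Q B\bigl(|u^{\varepsilon}|/\delta\bigr)\,dx\,dt\;\longrightarrow\;\iiint_{Q\times\Gamma\times Z} B(|u|/\delta)\,dx\,dt\,dy\,d\tau\,dz.
\]
Denote $\delta_{0}:=\|u\|_{B,Q\times\Gamma\times Z}$; by $B\in\Delta_{2}$ the right-hand side is a continuous, strictly decreasing function of $\delta$ equal to $1$ at $\delta=\delta_{0}$. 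For every $\eta\in(0,\delta_{0})$ it is therefore strictly less than $1$ at $\delta_{0}+\eta$ and strictly greater than $1$ at $\delta_{0}-\eta$, so for $\varepsilon$ small the pre-limit inherits the same strict inequalities. By the definition of the Luxemburg norm this yields $\delta_{0}-\eta\leq\|u^{\varepsilon}\|_{B,Q}\leq\delta_{0}+\eta$, and letting $\eta\to 0$ closes the argument.

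The delicate point is the density step in the first part: approximating a test function $\varphi\in L^{\widetilde{B}}(Q;\mathcal{C}_{per}(\Gamma\times Z))$ by smooth periodic ones in the strong Orlicz topology requires $\widetilde{B}\in\Delta_{2}$, which is precisely where the hypothesis (\ref{c1eq1}) enters crucially. A secondary subtlety is the strict monotonicity of the modular $\delta\mapsto\iiint B(|u|/\delta)$ exploited in the norm argument; this again follows from $B\in\Delta_{2}$ via dominated convergence. With these two verifications in place, the proof is essentially a parabolic transcription of the corresponding stationary argument of \cite{tacha3}, with $\Gamma=Y\times\varTheta$ replacing $Y$ and space-time integration replacing spatial integration.
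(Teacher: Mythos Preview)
Your proposal is correct and follows essentially the same route the paper intends: the statement is not proved in the paper itself but is deferred to an adaptation of \cite{tacha3}, and your argument---averaging lemma for smooth tests, density in $L^{B}$ and $L^{\widetilde{B}}$ via $B,\widetilde{B}\in\Delta_{2}$, and modular convergence for the norm---is precisely that adaptation carried out in the parabolic setting. The only cosmetic point is that you state the averaging lemma for $v\in\mathcal{C}(\overline{Q};\mathcal{C}_{per}(\Gamma\times Z))$ but then invoke it for $w_{\delta}\in L^{1}(Q;\mathcal{C}_{per}(\Gamma\times Z))$; the extension is immediate from the uniform trace bound $\|v^{\varepsilon}\|_{L^{1}(Q)}\leq\|v\|_{L^{1}(Q;\mathcal{C}_{per})}$ and density, but it would be cleaner to say so explicitly.
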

Next, we give a sequential compactness result in the Orlicz setting.
\begin{prop}
	Given a bounded sequence $(u_{\varepsilon})_{\varepsilon} \subset L^{B}(Q)$, one can extract a not relabeled subsequence such that $(u_{\varepsilon})_{\varepsilon}$ is weakly reiteratively two-scale convergent in $L^{B}(Q)$.
\end{prop}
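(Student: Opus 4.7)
The plan is to mimic the standard two-scale compactness argument but adapted to the Orlicz setting, exploiting the reflexivity of $L^{B}(Q)$ provided by Remark after \eqref{c1eq1} (both $B$ and $\widetilde{B}$ are in $\Delta_{2}$).

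First, I would associate to the bounded sequence the family of linear forms
\begin{equation*}
L_{\varepsilon} : L^{\widetilde{B}}(Q;\mathcal{C}_{per}(\Gamma\times Z)) \longrightarrow \mathbb{R},\qquad L_{\varepsilon}(\varphi) = \int_{Q} u_{\varepsilon}\,\varphi^{\varepsilon}\,dx\,dt,
\end{equation*}
where $\varphi^{\varepsilon}$ is the trace defined in \eqref{c4eq1}. Using the generalized H\"older inequality \eqref{c2eq19} together with the continuity estimate $\|\varphi^{\varepsilon}\|_{\widetilde{B},Q} \leq \|\varphi\|_{L^{\widetilde{B}}(Q;\mathcal{C}_{b}(\mathbb{R}^{d+1}_{y,\tau}\times\mathbb{R}^{d}_{z}))}$ recalled just before the definition of reiterated two-scale convergence, I get
\begin{equation*}
|L_{\varepsilon}(\varphi)| \leq 2\,\|u_{\varepsilon}\|_{B,Q}\,\|\varphi\|_{L^{\widetilde{B}}(Q;\mathcal{C}_{per}(\Gamma\times Z))} \leq C\,\|\varphi\|_{L^{\widetilde{B}}(Q;\mathcal{C}_{per}(\Gamma\times Z))},
\end{equation*}
so that the family $(L_{\varepsilon})_{\varepsilon}$ is equibounded.

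Next, since $\widetilde{B} \in \Delta_{2}$, the space $L^{\widetilde{B}}(Q;\mathcal{C}_{per}(\Gamma\times Z))$ is separable; pick a countable dense subset $\{\varphi_{k}\}_{k\geq 1}$. A Cantor diagonal extraction yields a (not relabeled) subsequence along which $L_{\varepsilon}(\varphi_{k})$ converges in $\mathbb{R}$ for every $k$. The uniform bound then lets me extend the limit by density to a bounded linear functional
\begin{equation*}
L : L^{\widetilde{B}}(Q;\mathcal{C}_{per}(\Gamma\times Z)) \longrightarrow \mathbb{R},\qquad L(\varphi) = \lim_{\varepsilon\to 0} L_{\varepsilon}(\varphi).
\end{equation*}

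It remains to represent $L$ by an element $u_{0}\in L^{B}(Q;L^{B}_{per}(\Gamma\times Z))$. The inclusion $L^{\widetilde{B}}(Q;\mathcal{C}_{per}(\Gamma\times Z)) \hookrightarrow L^{\widetilde{B}}(Q\times\Gamma\times Z)$ (via periodic extension/restriction to the fundamental cell) is continuous and has dense image, again because $\widetilde{B}\in\Delta_{2}$ makes smooth periodic functions dense in $L^{\widetilde{B}}_{per}(\Gamma\times Z)$. Hence $L$ extends uniquely to a bounded linear functional on $L^{\widetilde{B}}(Q\times\Gamma\times Z)$. By reflexivity of $L^{B}(Q\times\Gamma\times Z)$ (property (iii) of the list after Lemma \ref{c2lem2}), the dual of $L^{\widetilde{B}}(Q\times\Gamma\times Z)$ is identified with $L^{B}(Q\times\Gamma\times Z)$, yielding $u_{0}\in L^{B}(Q\times\Gamma\times Z) = L^{B}(Q;L^{B}_{per}(\Gamma\times Z))$ such that
\begin{equation*}
L(\varphi) = \iiint_{Q\times\Gamma\times Z} u_{0}\,\varphi\,dx\,dt\,dy\,d\tau\,dz
\end{equation*}
for every admissible test function, which is precisely \eqref{c4eq2}.

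The main technical hurdle I anticipate is the justification of the continuous dense embedding $L^{\widetilde{B}}(Q;\mathcal{C}_{per}(\Gamma\times Z)) \hookrightarrow L^{\widetilde{B}}(Q\times\Gamma\times Z)$; this requires invoking property (ix) in the list after Lemma \ref{c2lem2} (with $\widetilde{B}\in\Delta'$, guaranteed by \eqref{c1eq1}) to handle integrability of $\varphi(x,t,\cdot,\cdot,\cdot)$ over the product cell, together with density of $\mathcal{C}_{per}$ in $L^{\widetilde{B}}_{per}$ for the periodic variables. Once these ingredients are in place, the rest is a routine application of separability plus Alaoglu-style diagonal extraction.
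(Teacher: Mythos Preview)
Your overall strategy (uniform bound on the linear forms, diagonal extraction on a separable space, representation by duality) is the same route as the paper's reference \cite{tacha3}, but the step where you ``extend $L$ to $L^{\widetilde{B}}(Q\times\Gamma\times Z)$'' contains a genuine gap. From the continuous dense inclusion $L^{\widetilde{B}}(Q;\mathcal{C}_{per}(\Gamma\times Z)) \hookrightarrow L^{\widetilde{B}}(Q\times\Gamma\times Z)$ and the bound $|L(\varphi)|\le C\|\varphi\|_{L^{\widetilde{B}}(Q;\mathcal{C}_{per}(\Gamma\times Z))}$ you \emph{cannot} conclude that $L$ extends continuously to the larger space: a functional bounded for the stronger norm need not be bounded for the weaker one (think of a Dirac mass on $\mathcal{C}([0,1])\hookrightarrow L^{1}([0,1])$). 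What you must show is the sharper estimate $|L(\varphi)|\le C\,\|\varphi\|_{\widetilde{B},\,Q\times\Gamma\times Z}$, and your H\"older step only gives the sup-norm in the periodic variables, which is too crude.

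The missing ingredient is precisely the preceding proposition in the paper (applied with $\widetilde{B}$, which is legitimate since $\widetilde{B}\in\Delta_{2}$ as well): for $\varphi\in L^{\widetilde{B}}(Q;\mathcal{C}_{per}(\Gamma\times Z))$ one has $\lim_{\varepsilon\to 0}\|\varphi^{\varepsilon}\|_{\widetilde{B},Q}=\|\varphi\|_{\widetilde{B},\,Q\times\Gamma\times Z}$. Feeding this into H\"older \emph{before} passing to the limit yields
\[
|L(\varphi)|=\lim_{\varepsilon\to 0}\Big|\int_{Q}u_{\varepsilon}\varphi^{\varepsilon}\,dx\,dt\Big|\le 2\Big(\sup_{\varepsilon}\|u_{\varepsilon}\|_{B,Q}\Big)\lim_{\varepsilon\to 0}\|\varphi^{\varepsilon}\|_{\widetilde{B},Q}=2C\,\|\varphi\|_{\widetilde{B},\,Q\times\Gamma\times Z},
\]
which is the correct continuity estimate. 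Only then does density allow the extension, and the Riesz-type identification $(L^{\widetilde{B}})'=L^{B}$ produces the two-scale limit $u_{0}$. The ``technical hurdle'' you flagged (continuity and density of the embedding) is in fact routine; the real hurdle is the norm-convergence of traces, and that is exactly what the paper isolates in the proposition immediately preceding this one.
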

The results in the sequel follow as a consequence of density results in the `standard' setting.
\begin{prop}
	If a sequence $(u_{\varepsilon})_{\varepsilon}$ is weakly reiteratively two-scale convergent in $L^{B}(Q)$ to $u_{0} \in L^{B}(Q; L^{B}_{per}(\Gamma\times Z))$, then 
	\begin{itemize}
		\item[(i)] $u_{\varepsilon}\rightharpoonup \iint_{\varTheta\times Z} u_{0}(-,\cdot,\tau,z) d\tau dz$ in $L^{B}(Q)$-weakly two-scale\footnote{In the sense of \cite[Definition 4.1]{tacha1}}, and 
		 \item[(ii)] $u_{\varepsilon} \rightharpoonup \widetilde{u_{0}}$ in $L^{B}(Q)$-weakly as $\varepsilon\to 0$ where 
		 \begin{equation*}
		 \widetilde{u_{0}}(x,t) = \iint_{\Gamma\times Z} u_{0}(x,t, \cdot,\cdot,\cdot) dy d\tau dz.
		 \end{equation*}
	\end{itemize}
\end{prop}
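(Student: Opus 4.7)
Both parts follow from the definition (\ref{c4eq2}) of weak reiterated two-scale convergence by restricting the test functions to those that are independent of some of the microscopic variables. The key observation is that a function depending only on $(x,t,y)$ (respectively only on $(x,t)$) can be viewed as a $\Gamma\times Z$-periodic function of $(y,\tau,z)$ that is trivially constant in $(\tau,z)$ (respectively in $(y,\tau,z)$); such constant extensions still lie in $L^{\widetilde B}(Q;\mathcal{C}_{per}(\Gamma\times Z))$, and their $\varepsilon$-traces collapse to exactly the test expressions used in the target notions of convergence. Fubini--Tonelli is lawful on $Q\times\Gamma\times Z$ because $B,\widetilde B\in\Delta_2$ by (\ref{c1eq1}), so the generalised H\"older inequality (\ref{c2eq19}) places $u_0\varphi$ in $L^1$.

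For (i), given any $\psi\in L^{\widetilde B}(Q;\mathcal{C}_{per}(Y))$, set $\varphi(x,t,y,\tau,z):=\psi(x,t,y)$. Then $\varphi\in L^{\widetilde B}(Q;\mathcal{C}_{per}(\Gamma\times Z))$ and $\varphi^\varepsilon(x,t)=\psi(x,t,x/\varepsilon)$, so (\ref{c4eq2}) followed by Fubini yields
\begin{equation*}
\int_Q u_\varepsilon\,\psi\!\left(x,t,\tfrac{x}{\varepsilon}\right)dxdt \longrightarrow \iint_{Q\times Y}\psi(x,t,y)\left(\iint_{\varTheta\times Z}u_0(x,t,y,\tau,z)\,d\tau dz\right)dxdtdy,
\end{equation*}
which is precisely the asserted weak two-scale convergence to $\iint_{\varTheta\times Z}u_0(-,\cdot,\tau,z)\,d\tau dz$ in the sense of \cite[Definition 4.1]{tacha1}. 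For (ii), given $\psi\in L^{\widetilde B}(Q)$, set $\varphi(x,t,y,\tau,z):=\psi(x,t)$, which lies in $L^{\widetilde B}(Q;\mathcal{C}_{per}(\Gamma\times Z))$ with $\varphi^\varepsilon=\psi$. Then (\ref{c4eq2}) and Fubini give
\begin{equation*}
\int_Q u_\varepsilon\,\psi\,dxdt \longrightarrow \int_Q \psi(x,t)\,\widetilde{u_0}(x,t)\,dxdt,
\end{equation*}
and since $B\in\Delta_2$ ensures $(L^B(Q))'\simeq L^{\widetilde B}(Q)$, this is exactly $u_\varepsilon\rightharpoonup \widetilde{u_0}$ in $L^B(Q)$-weakly.

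The only obstacle is bookkeeping: one must verify that the constant-in-the-micro-variables extensions indeed belong to $L^{\widetilde B}(Q;\mathcal{C}_{per}(\Gamma\times Z))$ and that Fubini genuinely applies to the iterated integrals over $Q\times\Gamma\times Z$. Both points are immediate from the membership criteria collected in Section \ref{sect2} together with $B,\widetilde B\in\Delta_2$, so the proof reduces essentially to reading off the conclusion from the reiterated definition.
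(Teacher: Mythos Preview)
Your argument is correct and is exactly the standard one: restrict the class of test functions in (\ref{c4eq2}) to those constant in the finer micro-variables, and use Fubini together with the identification $(L^{B}(Q))'\simeq L^{\widetilde B}(Q)$. The paper does not spell out a proof here either; it simply remarks that the result ``follows as a consequence of density results in the `standard' setting'' (i.e.\ it is the Orlicz adaptation of the classical $L^p$ fact), so your write-up is if anything more explicit than what the paper provides.
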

In the sequel, we will consider the space 
\begin{equation}\label{c4eq5}
\mathfrak{X}^{B,\infty}_{per}(\mathbb{R}^{d+1}_{y,\tau}; \mathcal{C}_{b}(\mathbb{R}^{d}_{z})) := \mathfrak{X}^{B}_{per}(\mathbb{R}^{d+1}_{y,\tau}; \mathcal{C}_{b}(\mathbb{R}^{d}_{z})) \cap L^{\infty}(\mathbb{R}^{d+1}_{y,\tau}; \mathcal{C}_{b}(\mathbb{R}^{d}_{z})),
\end{equation}
endowed with the $L^{\infty}$ norm.
\begin{prop}\label{c4eq6}
	If $(u_{\varepsilon})_{\varepsilon}$ is weakly reiteratively two-scale convergent in $L^{B}(Q)$ to $u_{0} \in L^{B}(Q; L^{B}_{per}(\Gamma\times Z))$, then 
	\begin{equation*}
	\int_{Q} u_{\varepsilon} \varphi^{\varepsilon} dxdt \longrightarrow \iiint_{Q\times \Gamma\times Z} u_{0} \varphi dxdt dyd\tau dz,
	\end{equation*}
	for all $\varphi \in \mathcal{C}(\overline{Q})\otimes \mathfrak{X}^{B,\infty}_{per}(\mathbb{R}^{d+1}_{y,\tau}; \mathcal{C}_{b}(\mathbb{R}^{d}_{z}))$. 
	\par Moreover, if $v\in \mathcal{C}(\overline{Q};\mathfrak{X}^{B,\infty}_{per}(\mathbb{R}^{d+1}_{y,\tau}; \mathcal{C}_{b}(\mathbb{R}^{d}_{z})))$, then $v^{\varepsilon} \rightharpoonup v$ in $L^{B}(Q)$-weakly reiteratively 2s, as $\varepsilon\to 0$.
\end{prop}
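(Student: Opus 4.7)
The plan is to prove the extension of the class of admissible test functions from $L^{\widetilde{B}}(Q;\mathcal{C}_{per}(\Gamma\times Z))$ to $\mathcal{C}(\overline{Q})\otimes \mathfrak{X}^{B,\infty}_{per}(\mathbb{R}^{d+1}_{y,\tau};\mathcal{C}_{b}(\mathbb{R}^d_z))$ by a density plus cut–off argument, and then deduce the trace statement by combining sequential compactness with the extended convergence. By linearity it suffices to treat an elementary tensor $\varphi=\psi\otimes \chi$ with $\psi\in \mathcal{C}(\overline{Q})$ and $\chi\in \mathfrak{X}^{B,\infty}_{per}(\mathbb{R}^{d+1}_{y,\tau};\mathcal{C}_{b}(\mathbb{R}^d_z))$. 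Since $\mathcal{C}_{per}(\Gamma\times Z)$ is dense in $L^{\widetilde{B}}_{per}(\Gamma\times Z)$ and $\widetilde{B}\in\Delta_{2}$ by Remark~1.1 attached to (\ref{c1eq1}), one obtains a sequence $\chi_{n}\in \mathcal{C}_{per}(\Gamma\times Z)$ (e.g.\ through periodic mollification in the $(y,\tau)$ variables, followed by truncation at level $\|\chi\|_{\infty}$ which preserves continuity) such that $\chi_{n}\to\chi$ in $L^{\widetilde{B}}_{per}(\Gamma\times Z)$ with the uniform bound $\|\chi_{n}\|_{\infty}\leq\|\chi\|_{\infty}$.

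The decomposition
\begin{equation*}
\int_{Q} u_{\varepsilon}(\psi\chi)^{\varepsilon}\,dxdt
=\int_{Q} u_{\varepsilon}(\psi\chi_{n})^{\varepsilon}\,dxdt
+\int_{Q} u_{\varepsilon}\bigl(\psi(\chi-\chi_{n})\bigr)^{\varepsilon}\,dxdt
\end{equation*}
is then handled as follows. Since $\psi\chi_{n}\in \mathcal{C}(\overline{Q};\mathcal{C}_{per}(\Gamma\times Z))\subset L^{\widetilde{B}}(Q;\mathcal{C}_{per}(\Gamma\times Z))$, the hypothesis $u_{\varepsilon}\rightharpoonup u_{0}$ weakly reiteratively 2s drives the first term to $\iiint_{Q\times\Gamma\times Z}u_{0}\psi\chi_{n}$. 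For the remainder, the generalized Hölder inequality (\ref{c2eq19}) together with the continuity of the trace mapping $v\mapsto v^{\varepsilon}$ extended to $L^{\widetilde{B}}(Q;\mathcal{C}_{b})$ (the $L^{\widetilde{B}}$-counterpart of the statement recalled in Section \ref{sect4}) yields
\begin{equation*}
\Bigl|\int_{Q} u_{\varepsilon}\bigl(\psi(\chi-\chi_{n})\bigr)^{\varepsilon}dxdt\Bigr|
\leq 2\|u_{\varepsilon}\|_{B,Q}\,\|\psi(\chi-\chi_{n})\|_{L^{\widetilde{B}}(Q;\mathcal{C}_{b})}.
\end{equation*}
The sequence $(u_{\varepsilon})$ is bounded in $L^{B}(Q)$ by the uniform boundedness principle applied to the defining functionals, while the $\Delta'$–condition on $\widetilde{B}$ in (\ref{c1eq1}) together with the $L^{\infty}$–bound on $\chi_{n}$ allows one to bound $\|\psi(\chi-\chi_{n})\|_{L^{\widetilde{B}}(Q;\mathcal{C}_{b})}$ by a constant times $\|\chi-\chi_{n}\|_{\widetilde{B},\Gamma\times Z}$. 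Letting first $\varepsilon\to 0$ and then $n\to\infty$, and using that $\iiint u_{0}\psi\chi_{n}\to\iiint u_{0}\psi\chi$ by the Hölder inequality on $Q\times\Gamma\times Z$, delivers the first assertion.

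For the second assertion, fix $v\in \mathcal{C}(\overline{Q};\mathfrak{X}^{B,\infty}_{per}(\mathbb{R}^{d+1}_{y,\tau};\mathcal{C}_{b}(\mathbb{R}^d_z)))$. The trace $v^{\varepsilon}$ is bounded in $L^{\infty}(Q)$ by $\sup_{(x,t)\in\overline{Q}}\|v(x,t)\|_{\infty}$, hence bounded in $L^{B}(Q)$. The sequential compactness proposition yields a (not relabeled) subsequence and some $v_{0}\in L^{B}(Q;L^{B}_{per}(\Gamma\times Z))$ with $v^{\varepsilon}\rightharpoonup v_{0}$ weakly reiteratively 2s. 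To identify $v_{0}=v$, approximate $v$ by $v_{m}\in \mathcal{C}(\overline{Q};\mathcal{C}_{per}(\Gamma\times Z))$ (mollifying only in $(y,\tau)$ and exploiting that $v$ is already in $\mathcal{C}_{b}$ in $z$), and apply the proposition recalled earlier stating that $w^{\varepsilon}\rightharpoonup w$ weakly reiteratively 2s whenever $w\in L^{B}(Q;\mathcal{C}_{per}(\Gamma\times Z))$; writing $v^{\varepsilon}=v_{m}^{\varepsilon}+(v-v_{m})^{\varepsilon}$ and testing against any $\varphi\in L^{\widetilde{B}}(Q;\mathcal{C}_{per}(\Gamma\times Z))$, the remainder is controlled by the $L^{\infty}$–bound on $v-v_{m}$ together with Hölder, and passage to the limit in $m$ identifies $\int_{Q}v^{\varepsilon}\varphi^{\varepsilon}dxdt\to\iiint v\varphi$. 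Uniqueness of the weak reiterated two-scale limit then forces $v_{0}=v$ a.e., and the whole family converges.

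The main technical obstacle lies in the second term of the splitting: ensuring that $\|\psi(\chi-\chi_{n})\|_{L^{\widetilde{B}}(Q;\mathcal{C}_{b})}$ (or the analogous quantity for the trace) actually goes to zero, since $\chi-\chi_{n}$ is only controlled in $L^{\widetilde{B}}_{per}$ and not in $L^{\infty}$. This is precisely where the $\Delta'$–hypothesis (\ref{c1eq1}) on $\widetilde{B}$ is essential, since it factors the Orlicz modular of the product into a product of modulars, enabling the small $L^{\widetilde{B}}$–norm of $\chi-\chi_{n}$ to dominate the otherwise only bounded contributions from $\psi$ and from the $L^{\infty}$–bound preserved by the truncated approximants.
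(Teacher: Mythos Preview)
Your overall density-plus-remainder strategy is the natural one and is in the spirit of the argument the paper imports from \cite{tacha3}, but the step controlling the remainder term contains a real gap.

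You bound the remainder via the trace estimate $\|w^{\varepsilon}\|_{\widetilde B,Q}\le \|w\|_{L^{\widetilde B}(Q;\mathcal{C}_b)}$ applied to $w=\psi(\chi-\chi_n)$, and then claim that the $\Delta'$-condition on $\widetilde B$ yields $\|\psi(\chi-\chi_n)\|_{L^{\widetilde B}(Q;\mathcal{C}_b)}\lesssim \|\chi-\chi_n\|_{\widetilde B,\Gamma\times Z}$. This is false: the norm on the left is built from $\sup_{(y,\tau,z)}|\psi(x,t)(\chi-\chi_n)(y,\tau,z)|=|\psi(x,t)|\,\|\chi-\chi_n\|_{\infty}$, so after factoring with $\Delta'$ you obtain a bound by $\widetilde B(\|\chi-\chi_n\|_{\infty})$, not by $\|\chi-\chi_n\|_{\widetilde B,\Gamma\times Z}$. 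Your truncation gives only $\|\chi-\chi_n\|_{\infty}\le 2\|\chi\|_{\infty}$, which is bounded but not small, so the remainder does not vanish by this route. The same issue reappears in your proof of the second assertion, where you assume $(v-v_m)$ is small in $L^{\infty}$; membership of $v(x,t,\cdot)$ in $\mathfrak{X}^{B}_{per}$ only guarantees approximation in the $\Xi^{B}$ norm, not in $L^{\infty}$.

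The tool the paper (via \cite{tacha3}) actually relies on is precisely the $\Xi$-norm built into the definition of $\mathfrak{X}^{B}_{per}$: since $\chi$ lies in the $\Xi^{B}$-closure of $\mathcal{C}_{per}(\Gamma\times Z)$, Lemma~\ref{c2eq5} gives $\|(\chi-\chi_n)^{\varepsilon}\|_{B,Q}\le C\|\chi-\chi_n\|_{\Xi^{B}}\to 0$ \emph{uniformly in $\varepsilon$}. This is exactly the uniform trace control that your use of $L^{\widetilde B}(Q;\mathcal{C}_b)$ cannot supply. From there one combines the uniform $L^{\infty}$ bound on $\chi-\chi_n$ (coming from truncation) with the smallness in $L^{B}(Q)$ to conclude smallness in $L^{\widetilde B}(Q)$ on the bounded set $Q$ (an elementary interpolation of Orlicz modulars on finite measure), and then H\"older finishes the estimate. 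Alternatively, and this is how the space is used later in Proposition~\ref{c5eq3}, one works directly with $\mathfrak{X}^{\widetilde B}_{per}$ so that Lemma~\ref{c2eq5} already yields the $L^{\widetilde B}$ trace bound without interpolation. Either way, the essential point you are missing is that the trace of the remainder must be controlled through the $\Xi$-structure of $\mathfrak{X}_{per}$, not through the sup-norm trace inequality.
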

\begin{rem}
	\begin{itemize}
		\item[(1)] If $v\in L^{B}(Q; \mathcal{C}_{per}(\Gamma\times Z))$, then $v^{\varepsilon}\rightarrow v$ in $L^{B}(Q)$-strongly reiteratively 2s, as $\varepsilon\to 0$.
		\item[(2)] If $(u_{\varepsilon})_{\varepsilon} \subset L^{B}(Q)$ is strongly reiteratively two-scale convergent in $L^{B}(Q)$ to $u_{0} \in L^{B}(Q; \mathcal{C}_{per}(\Gamma\times Z))$, then 
		\begin{itemize}
			\item[(i)] $u_{\varepsilon} \rightharpoonup u_{0}$ in $L^{B}(Q)$-weakly reiteratively 2s, as $\varepsilon\to 0$;
			\item[(ii)] $\|u_{\varepsilon}\|_{B,Q} \rightarrow \|u_{0}\|_{B,Q\times\Gamma\times Z}$ as $\varepsilon\to 0$.
		\end{itemize}
	\end{itemize}
\end{rem}
The following result is key to define weak reiterated two-scale convergence in Orlicz-Sobolev spaces, also providing a sequential compactness result in $W^{1}L^{B}(Q)$. It is an adaptation of \cite[Proposition 2.12]{tacha3}
\begin{prop}\label{c3eq4}
	Assume that $\varepsilon$ is a fundamental sequence, and that $(u_{\varepsilon})_{\varepsilon}$ is a bounded sequence in $L^{B}(0,T ; W^{1}L^{B}(\Omega))$. Then, a subsequence can be extracted, still denoted $\varepsilon$, such that as $\varepsilon\to 0$,
	\begin{equation}
	\begin{array}{l}
	u_{\varepsilon} \rightharpoonup u_{0} \quad \textup{in} \;\; L^{B}(0,T ; W^{1}L^{B}(\Omega))-\textup{weakly}, \\
	u_{\varepsilon} \rightharpoonup u_{0} \quad \textup{in} \;\; L^{B}(Q)-\textup{weakly\; reiteratively\; 2s}, \\
	\dfrac{\partial u_{\varepsilon}}{\partial x_{i}} \rightharpoonup \dfrac{\partial u_{0}}{\partial x_{i}} + \dfrac{\partial u_{1}}{\partial y_{i}} + \dfrac{\partial u_{2}}{\partial z_{i}} \quad \textup{in} \;\; L^{B}(Q)-\textup{weakly\; reiteratively\; 2s}\; (1\leq i\leq d),
	\end{array}
	\end{equation}
	where $u_{0} \in L^{B}(0,T ; W^{1}L^{B}(\Omega))$, $u_{1}\in L^{B}(Q\times\varTheta ; W^{1}_{\#}L^{B}_{per}(Y))$ and $u_{2} \in L^{B}(Q\times\varTheta ; L^{B}_{per}(Y; W^{1}_{\#}L^{B}(Z)))$. Futhermore if $(u_{\varepsilon})_{\varepsilon} \subset L^{B}(0,T ; W^{1}_{0}L^{B}(\Omega))$ then the weak reiterated two-scale limit $u_{0}$ lies in $L^{B}(0,T ; W^{1}_{0}L^{B}(\Omega))$. 
\end{prop}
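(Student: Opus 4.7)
My plan is to follow the spatial-only strategy of \cite{tacha3}, adapted to the time-dependent setting by carrying along the macroscopic time variable $t$ and its fast counterpart $\tau$. Since $B,\widetilde B\in\Delta_2$ (Remark 2.2), the space $L^B(0,T;W^1L^B(\Omega))$ is reflexive, so up to a subsequence $u_\varepsilon\rightharpoonup u_0$ in that space; in particular $u_\varepsilon$ and each $\partial u_\varepsilon/\partial x_i$ are bounded in $L^B(Q)$. Applying the sequential compactness result for weak reiterated two-scale convergence (stated earlier in this section) and a diagonal extraction over $i=1,\dots,d$ then yields a further subsequence along which $u_\varepsilon\rightharpoonup v_0$ and $\partial u_\varepsilon/\partial x_i\rightharpoonup w_i$ in $L^B(Q)$ weakly reiteratively 2s, with $v_0,w_i\in L^B(Q;L^B_{per}(\Gamma\times Z))$.

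For the identification $v_0=u_0$, I first invoke the proposition linking 2s-weak limits to weak limits in $L^B(Q)$, which, by uniqueness of weak limits, gives $\iint_{\Gamma\times Z}v_0\,dyd\tau dz=u_0$. To rule out oscillating components, I pick $\psi\in\mathcal D(Q)$, $g\in\mathcal C^\infty_{per}(\Gamma\times Z)$ and exploit the chain rule identity
\[
(\partial_{z_i}g)^\varepsilon=\varepsilon^2\partial_{x_i}(g^\varepsilon)-\varepsilon\,(\partial_{y_i}g)^\varepsilon.
\]
Integrating $\int_Q u_\varepsilon\psi(\partial_{z_i}g)^\varepsilon\,dxdt$ by parts, the $\varepsilon^2$ and $\varepsilon$ prefactors on the right side annihilate the corresponding contributions via the $L^B(Q)$-bounds on $u_\varepsilon$ and $\partial u_\varepsilon/\partial x_i$; passing to the reiterated 2s limit forces $\iiint v_0\psi\partial_{z_i}g\,dxdtdyd\tau dz=0$, hence $\partial_{z_i}v_0=0$. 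Repeating with $g$ independent of $z$ gives $\partial_{y_i}v_0=0$, so $v_0$ depends only on $(x,t,\tau)$. The residual $\tau$-dependence is eliminated by time-mollifying $u_\varepsilon$ and invoking the compact embedding $W^1L^B(\Omega)\hookrightarrow\hookrightarrow L^B(\Omega)$, exactly as in the time-independent proof of \cite{tacha3}.

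For the corrector decomposition $w_i=\partial u_0/\partial x_i+\partial u_1/\partial y_i+\partial u_2/\partial z_i$, I proceed in two stages. First, testing $\partial u_\varepsilon/\partial x_i$ against $\varepsilon\psi(x,t)g(x/\varepsilon,t/\varepsilon)$ with $g\in\mathcal C^\infty_{per}(\Gamma)$, integrating by parts in $x_i$ and passing to the 2s limit, produces an orthogonality relation between the $z$-average of $w_i-\partial u_0/\partial x_i$ and arbitrary $y$-gradients; an Orlicz version of de Rham's lemma (established in \cite{tacha3} and valid precisely because $B,\widetilde B\in\Delta_2$) then yields $u_1\in L^B(Q\times\varTheta;W^1_{\#}L^B(Y))$. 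Repeating with the test function $\varepsilon^2\psi(x,t)g(x/\varepsilon,t/\varepsilon,x/\varepsilon^2)$ isolates the remaining $z$-oscillation and, by the same de Rham argument applied $y$-pointwise, produces $u_2\in L^B(Q\times\varTheta;L^B_{per}(Y;W^1_{\#}L^B(Z)))$. The final clause, that $u_0\in L^B(0,T;W^1_0L^B(\Omega))$ whenever $(u_\varepsilon)_\varepsilon$ lies in this subspace, follows from its weak closedness. The main obstacle is the de Rham step: extracting two nested Orlicz-Sobolev correctors with the correct periodicity and mean-zero normalizations from purely distributional information, where the full force of the $\Delta_2$ hypotheses on $B$ and $\widetilde B$ is used.
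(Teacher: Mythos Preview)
The paper gives no detailed proof here; it simply declares the result ``an adaptation of \cite[Proposition 2.12]{tacha3}''. Your outline follows that same scheme, and the spatial portions---the reflexive extraction, the chain-rule tests giving $\partial_{z_i}v_0=\partial_{y_i}v_0=0$, and the two-stage de Rham construction of $u_1$ and $u_2$---are correct and match what is intended.

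The gap is your treatment of the $\tau$-independence of $v_0$. The sentence ``The residual $\tau$-dependence is eliminated by time-mollifying $u_\varepsilon$ and invoking the compact embedding $W^1L^B(\Omega)\hookrightarrow\hookrightarrow L^B(\Omega)$, exactly as in the time-independent proof of \cite{tacha3}'' does not work: the embedding you invoke is purely spatial, time-mollification yields nothing without a bound on $\partial u_\varepsilon/\partial t$, and the time-independent result you cite has no $\tau$ variable to remove. In fact, under the stated hypothesis alone the second line of the display can fail: for $u_\varepsilon(x,t)=g(t/\varepsilon)h(x)$ with non-constant $g\in\mathcal C^\infty_{per}(\varTheta)$ and $h\in W^1L^B(\Omega)$, the sequence is bounded in $L^B(0,T;W^1L^B(\Omega))$, its weak limit there is $\bigl(\int_\varTheta g\bigr)h$, but its reiterated two-scale limit is $g(\tau)h(x)$. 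What is actually required is a bound on $\partial u_\varepsilon/\partial t$---available in the paper's application, where $(u_\varepsilon)$ is bounded in $\mathcal W_0(0,T;W^1_0L^B(\Omega))$---so that, writing $(\partial_\tau g)^\varepsilon=\varepsilon\,\partial_t(g^\varepsilon)$ and integrating by parts in $t$, the term $\varepsilon\int_0^T\langle\partial_t u_\varepsilon,\psi g^\varepsilon\rangle\,dt$ is $O(\varepsilon)$ and one concludes $\partial_\tau v_0=0$. No spatial argument can replace this step.
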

In view of the next applications, we underline that, under the assumptions of the above proposition, the canonical injection $W^{1}L^{B}(Q) \hookrightarrow L^{B}(Q)$ is compact.
\begin{cor}
	If a sequence $(u_{\varepsilon})_{\varepsilon}$ is such that $u_{\varepsilon} \rightharpoonup v_{0}$ weakly reiteratively two-scale  in $W^{1}L^{B}(Q)$, we have  
	\begin{itemize}
		\item[(i)] $u_{\varepsilon}\rightharpoonup \iint_{\varTheta\times Z} v_{0}(-,\cdot,\tau,z) d\tau dz$ weakly two-scale in $W^{1}L^{B}(Q)$,
		\item[(ii)] $u_{\varepsilon} \rightharpoonup \widetilde{v_{0}}$ in $W^{1}L^{B}(Q)$-weakly as $\varepsilon\to 0$, where 
		\begin{equation*}
		\widetilde{v_{0}}(x,t) = \iint_{\Gamma\times Z} v_{0}(x,t, \cdot,\cdot,\cdot) dy d\tau dz.
		\end{equation*}
	\end{itemize}
\end{cor}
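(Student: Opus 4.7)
The plan is to reduce the statement to the previously established $L^{B}(Q)$-version of the result (namely, the proposition stating that weak reiterated two-scale convergence in $L^{B}(Q)$ implies weak two-scale convergence after $(\tau,z)$-averaging and weak $L^{B}(Q)$-convergence after full averaging), applied separately to $u_{\varepsilon}$ itself and to each of its distributional partial derivatives, and then to identify the resulting limits via the uniqueness of the distributional derivative.

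First, by the vector-valued extension of the definition of weak reiterated two-scale convergence, the hypothesis gives that $u_{\varepsilon}\rightharpoonup v_{0}$ weakly reiteratively two-scale in $L^{B}(Q)$ and that, up to a not relabeled subsequence (via the sequential compactness result recalled above), $\partial u_{\varepsilon}/\partial x_{i}\rightharpoonup w_{i}$ weakly reiteratively two-scale in $L^{B}(Q)$ for some $w_{i}\in L^{B}(Q; L^{B}_{per}(\Gamma\times Z))$, $1\leq i\leq d$. Applying the aforementioned $L^{B}(Q)$-proposition to $u_{\varepsilon}$ immediately yields
\begin{equation*}
u_{\varepsilon}\rightharpoonup \iint_{\varTheta\times Z} v_{0}(-,\cdot,\tau,z)\,d\tau dz\quad \textup{weakly two-scale in}\; L^{B}(Q),
\end{equation*}
together with $u_{\varepsilon}\rightharpoonup \widetilde{v_{0}}$ in $L^{B}(Q)$-weak; applying the same proposition to each $\partial u_{\varepsilon}/\partial x_{i}$ gives the analogous limits with $w_{i}$ in place of $v_{0}$.

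To upgrade these $L^{B}(Q)$-statements to $W^{1}L^{B}(Q)$-statements, one identifies the gradient limits. For (ii), this is direct: since distributional differentiation is continuous from $L^{B}(Q)$-weak to $\mathcal{D}'(Q)$, from $u_{\varepsilon}\rightharpoonup \widetilde{v_{0}}$ and $\partial u_{\varepsilon}/\partial x_{i}\rightharpoonup \widetilde{w_{i}}$ in $L^{B}(Q)$-weak we obtain $\widetilde{w_{i}}=\partial \widetilde{v_{0}}/\partial x_{i}$ by uniqueness of the distributional derivative, which yields $u_{\varepsilon}\rightharpoonup \widetilde{v_{0}}$ in $W^{1}L^{B}(Q)$-weak. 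For (i), the analogous identification is made by testing against $\varphi(x,t,y)=\psi(x,t)\chi(y)$ with $\psi\in\mathcal{D}(Q)$ and $\chi\in\mathcal{C}^{\infty}_{per}(Y)$, passing to the weak two-scale limit in the relation $\int_{Q}(\partial u_{\varepsilon}/\partial x_{i})\varphi^{\varepsilon}\,dxdt = -\int_{Q}u_{\varepsilon}(\partial_{x_{i}}\varphi+\varepsilon^{-1}\partial_{y_{i}}\varphi)^{\varepsilon}\,dxdt$, and exploiting the $Z$-periodicity of $v_{0}$ (so that averaging $\partial_{z_{i}}v_{0}$ over $Z$ vanishes).

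The principal obstacle lies in the identification step for (i): unlike in the pure $L^{B}$-setting, the weak two-scale limit of $\partial u_{\varepsilon}/\partial x_{i}$ is not the classical derivative of the weak two-scale limit of $u_{\varepsilon}$, but carries an additional microscopic $y$-corrector inherited from the reiterated structure. Verifying that after partial averaging over $(\tau,z)$ this corrector survives intact and matches the stated form of the limit requires a careful use of Fubini's theorem together with the $Y$- and $Z$-periodicities, keeping track of the vanishing of the $z$-divergence of $v_{0}$ under the $Z$-average; this is where the bulk of the work lies, with everything else reducing to a direct application of previously recalled results.
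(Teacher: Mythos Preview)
The paper states this corollary without proof, regarding it as an immediate consequence of the $L^{B}(Q)$-proposition together with the structure of the reiterated two-scale limits of gradients given in Proposition~\ref{c3eq4}. Your reduction to the $L^{B}$-version, applied componentwise to $u_{\varepsilon}$ and to each $\partial u_{\varepsilon}/\partial x_{i}$, is exactly the intended route, and your argument for (ii) via uniqueness of the distributional derivative is correct.

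Two points deserve comment. First, the subsequence extraction for the derivatives is unnecessary: the hypothesis ``$u_{\varepsilon}\rightharpoonup v_{0}$ weakly reiteratively two-scale in $W^{1}L^{B}(Q)$'' already encodes, by definition (arguing in components, as the paper remarks), the weak reiterated two-scale convergence of each $\partial u_{\varepsilon}/\partial x_{i}$ to a limit of the form $\partial_{x_{i}}u_{0}+\partial_{y_{i}}u_{1}+\partial_{z_{i}}u_{2}$; you may take $w_{i}$ to be this expression from the start. Second, the ``principal obstacle'' you describe for (i) is much lighter than you suggest. Once you know that $\partial u_{\varepsilon}/\partial x_{i}$ has reiterated two-scale limit $\partial_{x_{i}}u_{0}+\partial_{y_{i}}u_{1}+\partial_{z_{i}}u_{2}$, the $L^{B}$-proposition gives that its weak two-scale limit is the $(\tau,z)$-average of this expression. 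By $Z$-periodicity of $u_{2}$, $\int_{Z}\partial_{z_{i}}u_{2}\,dz=0$, so the $(\tau,z)$-average reduces to $\partial_{x_{i}}u_{0}+\partial_{y_{i}}\bigl(\int_{\varTheta}u_{1}\,d\tau\bigr)$, which is precisely the form required for weak two-scale convergence in $W^{1}L^{B}(Q)$ in the sense of \cite{tacha1}. No delicate test-function manipulation with $\varepsilon^{-1}\partial_{y_{i}}\varphi$ is needed; the identification is a one-line application of Fubini and periodicity.
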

\begin{rem}
	If $(v_{\varepsilon})_{\varepsilon}\subset L^{B}(Q)$ and $v_{\varepsilon} \rightharpoonup v_{0}$ weakly reiteratively two-scale in $L^{B}(Q)$, as $\varepsilon\to 0$ and for any $\varepsilon>0 \, : \, v_{\varepsilon}\geq 0$ a.e. in $Q\times\Gamma\times Z$, then $v_{0} \geq 0$ a.e. in $Q\times\Gamma\times Z$.
\end{rem}
In the next section, we investigate to the reiterated homogenization of problem (\ref{c1eq2}).

\section{Homogenization results for (\ref{c1eq2})}\label{sect5}

We intend to investigate the limiting behavior, as $0<\varepsilon \to 0$, of a sequence of solutions $u_{\varepsilon}$ of (\ref{c1eq2}), taking into account all the assumptions in Section \ref{sect1}, in particular requiring that the coefficients $a$ satisfy (\ref{c1eq3})-(\ref{c1eq1}) together with the periodicity assumption (\ref{c1eq6}). Hence, we start by showing in suitable spaces of regular functions, where the compositions of functions in the weak formulation of (\ref{c1eq2}) are meaningful, whose convergence is appropriate to consider limits as $\varepsilon\to 0$. Let us begin by some preliminaries.

\subsection{Preliminaries}

Given $\textbf{v} = (v_{i}) \in \mathcal{C}_{per}(\Gamma\times Z; \mathbb{R})^{d}$, taking account traces results in Section \ref{sect4} and making use of the notation in (\ref{c4eq5}), under the extra assumption of periodicity on the first third variables of $a$ and for $\textbf{w} = (w_{i}) \in \mathcal{C}(\overline{Q};\mathcal{C}_{per}(\Gamma\times Z; \mathbb{R})^{d})$, we conclude that, for every $1\leq i\leq d$, the map $(x,t) \rightarrow a_{i}(-,\cdot,\textbf{w}(x,t,-,\cdot))$ belongs to $\mathcal{C}(\overline{Q}; \mathfrak{X}^{\widetilde{B},\infty}_{per}(\mathbb{R}^{d+1}_{y,\tau},\mathcal{C}_{b}(\mathbb{R}^{d}_{z})))$, where $a_{i}(-,\cdot,\textbf{v}(x,t,-,\cdot))$ is the function $(y,t,z)\rightarrow a_{i}(y,t,z, \textbf{v}(y,t,z))$. Therefore, for fixed $\varepsilon>0$, one defines the function $(x,t)\rightarrow a\left(\dfrac{x}{\varepsilon},\dfrac{t}{\varepsilon},\dfrac{x}{\varepsilon^{2}}, \textbf{v}\left(x,t,\dfrac{x}{\varepsilon},\dfrac{t}{\varepsilon},\dfrac{x}{\varepsilon^{2}}\right)\right)$ of $Q$ to $\mathbb{R}^{d}$ (denoted $a^{\varepsilon}(-,\cdot, \textbf{v}^{\varepsilon})$) as an element of $L^{\infty}(Q;\mathbb{R})^{d}$. 
Hence, we are in position to state our first convergence result.
\begin{prop}\label{c5eq3}
	For $\textbf{v} \in \mathcal{C}(\overline{Q};\mathcal{C}_{per}(\Gamma\times Z; \mathbb{R})^{d})$, let $a= (a_{i})_{1\leq i\leq d} : \mathbb{R}^{d}\times\mathbb{R}\times\mathbb{R}^{d}\times\mathbb{R}^{d} \rightarrow \mathbb{R}^{d}$ satisfy (\ref{c1eq3})-(\ref{c1eq6}), we have that 
	\begin{itemize}
		\item[(i)] for each $1\leq i \leq d$,
		\begin{equation*}
		\begin{array}{l}
		a_{i}(y,t,z, \textbf{v}(x,t,y,\tau,z)) \in \mathcal{C}(\overline{Q}; \mathfrak{X}^{\widetilde{B},\infty}_{per}(\mathbb{R}^{d+1}_{y,\tau},\mathcal{C}_{b}(\mathbb{R}^{d}_{z}))) \quad \textup{and} \\
		
		\\
		a_{i}^{\varepsilon}(-,\cdot, \textbf{v}^{\varepsilon}) \rightharpoonup a_{i}(-,\cdot, \textbf{v}) \quad \textup{in} \; L^{\widetilde{B}}(Q)-\textup{weakly\, reiteratively\, 2s,\; as}\; \varepsilon\to 0.
		\end{array}
		\end{equation*} 
		\item[(ii)] The function $\textbf{v}\rightarrow a(-,\cdot, \textbf{v})$ from $\mathcal{C}(\overline{Q}; \mathcal{C}_{per}(\Gamma\times Z; \mathbb{R})^{d})$ to $L^{\widetilde{B}}(Q; L^{\widetilde{B}}_{per}(\Gamma\times Z,\mathbb{R}))^{d}$ extends by continuity to a (not relabeled) function from $L^{B}(Q; L^{B}_{per}(\Gamma\times Z,\mathbb{R}))^{d}$ to $L^{\widetilde{B}}(Q; L^{\widetilde{B}}_{per}(\Gamma\times Z,\mathbb{R}))^{d}$, and satisfying
		\begin{equation}\label{c5eq1}
		\|a(-,\cdot, \textbf{v}) - a(-,\cdot, \textbf{w})\|_{L^{\widetilde{B}}(Q; L^{\widetilde{B}}_{per}(\Gamma\times Z,\mathbb{R}))^{d}} \leq c\, \|\textbf{v} - \textbf{w}\|^{\alpha}_{L^{B}(Q; L^{B}_{per}(\Gamma\times Z,\mathbb{R}))^{d}}
		\end{equation}
		and 
		\begin{equation}\label{c5eq2}
		\left(a(-,\cdot, \textbf{v}) - a(-,\cdot, \textbf{w})\, , \, \textbf{v}-\textbf{w}\right) \geq c_{2} B(|\textbf{v}-\textbf{w}|) \quad \textup{a.e.\, in}\; \overline{Q}\times\Gamma\times Z,
		\end{equation}
		for all $\textbf{v}, \textbf{w} \in L^{B}(Q; L^{B}_{per}(\Gamma\times Z,\mathbb{R}))^{d}$.
	\end{itemize}
\end{prop}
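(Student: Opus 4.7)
The plan is to establish the two assertions separately, following the strategy used for Proposition \ref{c3eq2} but carried out in the augmented setting that now includes the microscopic variables $(y,\tau,z)$. For part (i), fix $\textbf{v}\in \mathcal{C}(\overline{Q};\mathcal{C}_{per}(\Gamma\times Z;\mathbb{R})^{d})$. Since $\overline{Q}\times\Gamma\times Z$ is compact, $\textbf{v}$ is bounded, and combining (\ref{c1eq4}) with the Lipschitz-type estimate (\ref{c1eq5})(i) yields a uniform $L^{\infty}$ bound on $a_{i}(y,\tau,z,\textbf{v}(x,t,y,\tau,z))$. The $\Gamma\times Z$-periodicity in $(y,\tau,z)$ is inherited from (\ref{c1eq6})(i) combined with the periodicity of $\textbf{v}$ in the same variables. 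Continuity in $(x,t)$ with values in $\mathfrak{X}^{\widetilde{B},\infty}_{per}(\mathbb{R}^{d+1}_{y,\tau};\mathcal{C}_{b}(\mathbb{R}^{d}_{z}))$ follows from the uniform continuity of $\textbf{v}$ on $\overline{Q}\times\Gamma\times Z$, the estimate (\ref{c1eq5})(i), and the local continuity hypothesis (\ref{c1eq6})(ii) in the variable $y$. Once membership in $\mathcal{C}(\overline{Q};\mathfrak{X}^{\widetilde{B},\infty}_{per}(\mathbb{R}^{d+1}_{y,\tau};\mathcal{C}_{b}(\mathbb{R}^{d}_{z})))$ is secured, the weak reiterated two-scale convergence $a_{i}^{\varepsilon}(-,\cdot,\textbf{v}^{\varepsilon})\rightharpoonup a_{i}(-,\cdot,\textbf{v})$ in $L^{\widetilde{B}}(Q)$ is a direct consequence of the $\widetilde{B}$-analog of the second assertion in Proposition \ref{c4eq6}, which is legitimate because $\widetilde{B}\in\Delta_{2}$ by (\ref{c1eq1}).

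For part (ii), the estimate (\ref{c5eq1}) is obtained by replicating the modular computation performed in Proposition \ref{c3eq2}, with integration now carried out over the enlarged set $Q\times\Gamma\times Z$. Starting from the pointwise bound $|a(y,\tau,z,\textbf{v})-a(y,\tau,z,\textbf{w})|\leq c_{0}\widetilde{B}^{-1}(B(c_{1}|\textbf{v}-\textbf{w}|))$ supplied by (\ref{c1eq5})(i), one integrates over $Q\times\Gamma\times Z$, splits the argument according to whether the Luxemburg norm of the left-hand side is below or above $1$, and invokes the $\Delta'$ condition on $\widetilde{B}$ to dominate $\widetilde{B}(c_{0}\widetilde{B}^{-1}(B(c_{1}t)))$ by $\eta\,\widetilde{B}(c_{0})\,B(B(c_{1}t))$ for a suitable $\Delta'$ constant $\eta$. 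Converting the resulting modular bounds into norm bounds via Lemma \ref{c2lem2} yields (\ref{c5eq1}) on the smooth class $\mathcal{C}(\overline{Q};\mathcal{C}_{per}(\Gamma\times Z;\mathbb{R}))^{d}$ with the exponent $\alpha\in\{\rho_{1},\rho_{2},\rho_{1}-1,\frac{\rho_{2}}{\rho_{1}}(\rho_{1}-1)\}$, and density of this class in $L^{B}(Q;L^{B}_{per}(\Gamma\times Z;\mathbb{R}))^{d}$ provides the continuous extension. The inequality (\ref{c5eq2}) is then derived by integrating the pointwise monotonicity statement (\ref{c1eq5})(ii) for continuous representatives and passing to the limit via the just-established continuity of the extension.

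I expect the principal technical obstacle to be the careful bookkeeping of the norm-versus-modular dichotomy in the derivation of (\ref{c5eq1}). The value of $\alpha$ depends on which of the subregimes $\|\,\cdot\,\|_{\widetilde{B}}<1$ or $\|\,\cdot\,\|_{\widetilde{B}}>1$ one is working in, and at each intermediate step the appropriate item of Lemma \ref{c2lem2} must be invoked with the correct branch; the $\Delta'$ condition on $\widetilde{B}$ is indispensable to close the chain of inequalities, since without it no modular comparison between $\widetilde{B}(c_{0}\widetilde{B}^{-1}(B(c_{1}t)))$ and $B(c_{1}t)$ is available. This is the same difficulty overcome in Proposition \ref{c3eq2}, here lifted to the enlarged product space $Q\times\Gamma\times Z$; once it is handled, density of smooth periodic functions together with the continuity of the extension automatically yield both (\ref{c5eq1}) on the whole space and the monotonicity inequality (\ref{c5eq2}).
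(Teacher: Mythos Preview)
Your proposal is correct and follows essentially the same approach as the paper: for (i) you verify membership in $\mathcal{C}(\overline{Q};\mathfrak{X}^{\widetilde{B},\infty}_{per}(\mathbb{R}^{d+1}_{y,\tau};\mathcal{C}_{b}(\mathbb{R}^{d}_{z})))$ and then invoke Proposition~\ref{c4eq6}, and for (ii) you transplant the modular computation of Proposition~\ref{c3eq2} to the enlarged domain $Q\times\Gamma\times Z$ and conclude by density---this is exactly what the paper does (indeed, the paper's proof is terser and simply refers back to Proposition~\ref{c3eq2} without rewriting the case split). Your explicit remark that the $\widetilde{B}$-version of Proposition~\ref{c4eq6} is legitimate because $\widetilde{B}\in\Delta_{2}$ is a detail the paper leaves implicit.
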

\begin{proof}
	It is easily seen that, for every $1\leq i \leq d$, when $\textbf{v} \in \mathcal{C}(\overline{Q};\mathcal{C}_{per}(\Gamma\times Z; \mathbb{R})^{d})$, $a_{i}(y,t,z, \textbf{v}(x,t,y,\tau,z)) \in \mathcal{C}(\overline{Q}; \mathfrak{X}^{\widetilde{B},\infty}_{per}(\mathbb{R}^{d+1}_{y,\tau},\mathcal{C}_{b}(\mathbb{R}^{d}_{z})))$. Thus, by Proposition \ref{c4eq6}, it follows that the sequence 
	\begin{equation*}
	a_{i}^{\varepsilon}(-,\cdot, \textbf{v}^{\varepsilon}) \rightharpoonup a_{i}(-,\cdot, \textbf{v}) \quad \textup{in} \; L^{\widetilde{B}}(Q)-\textup{weakly\, reiteratively\, 2s,\; as}\; \varepsilon\to 0.
	\end{equation*}
Moreover, by (\ref{c1eq3})-(\ref{c1eq6}), we get
\begin{equation*}
|a^{\varepsilon}(-,\cdot, \textbf{v}^{\varepsilon})- a^{\varepsilon}(-,\cdot, \textbf{v'}^{\varepsilon})|\leq c_{0}\,\widetilde{B}^{-1}(B(c_{1}|\textbf{v}^{\varepsilon}-\textbf{v'}^{\varepsilon}|)),
\end{equation*}
\begin{equation*}
\left(a^{\varepsilon}(-,\cdot, \textbf{v}^{\varepsilon}) - a^{\varepsilon}(-,\cdot, \textbf{v'}^{\varepsilon})\, , \, \textbf{v}^{\varepsilon}-\textbf{v'}^{\varepsilon}\right) \geq c_{2} B(|\textbf{v}^{\varepsilon}-\textbf{v'}^{\varepsilon}|) \quad \textup{a.e.\, in}\; \overline{Q},\; \textup{for\, any}\; \varepsilon>0.
\end{equation*}
On the other hand, taking into account (\ref{c1eq5}) and arguing as in the proof of Proposition \ref{c3eq2}, there exist constants $c_{0}, c_{2}$ such that (\ref{c5eq1}) and (\ref{c5eq2}) hold true for all $\textbf{v}, \textbf{w} \in \mathcal{C}(\overline{Q}; \mathcal{C}_{per}(\Gamma\times Z;\mathbb{R}))^{d}$. We end the proof by continuity and density arguments.
\end{proof}
\begin{cor}
	Under the same assumptions of Proposition \ref{c5eq3}. Let
	\begin{equation*}
	\phi_{\varepsilon}(x,t) = \varphi_{0}(x,t) + \varepsilon \varphi_{1}\left(x,\frac{x}{\varepsilon},\frac{t}{\varepsilon}\right) + \varepsilon^{2} \varphi_{2}\left(x,\frac{x}{\varepsilon},\frac{t}{\varepsilon},\frac{x}{\varepsilon^{2}}\right) \quad (\varepsilon>0,\;  (x,t) \in Q),
	\end{equation*}
	 where $\varphi_{0} \in \mathcal{D}(Q;\mathbb{R})$, $\varphi_{1} \in \mathcal{D}(Q;\mathbb{R})\otimes\mathcal{C}^{\infty}_{per}(\Gamma;\mathbb{R})$ and $\varphi_{2} \in \mathcal{D}(Q;\mathbb{R})\otimes\mathcal{C}^{\infty}_{per}(\Gamma;\mathbb{R})\otimes\mathcal{C}_{per}(Z)$ ($\phi_{\varepsilon} = \varphi_{0} + \varepsilon \varphi_{1}^{\varepsilon} + \varepsilon^{2} \varphi_{2}^{\varepsilon}$, shortly). Then, as $\varepsilon\to 0$, 
	 \begin{equation*}
	 a^{\varepsilon}(-,\cdot, D\phi_{\varepsilon}) \rightharpoonup a(-,\cdot, D\varphi_{0} + D_{y}\varphi_{1} + D_{z}\varphi_{2}) \quad \textup{in}\, L^{B}(Q)^{d}-\textup{weakly\, reiteratively\, 2s}.
	 \end{equation*}
	 Futhermore, given $(v_{\varepsilon})_{\varepsilon}\subset L^{B}(Q)^{d}$ such that $v_{\varepsilon}  \rightharpoonup v_{0}$ in $L^{B}(Q)^{d}$-weakly reiteratively 2s as $\varepsilon\to 0$, one has 
	 \begin{equation*}
	 \lim_{\varepsilon\to 0} \int_{Q} a^{\varepsilon}(-,\cdot, D\phi_{\varepsilon})\,v_{\varepsilon}\, dxdt = \iiint_{Q\times \Gamma\times Z} a(-,\cdot, D\varphi_{0} + D_{y}\varphi_{1} + D_{z}\varphi_{2})\,v_{0}\,dxdt dy d\tau dz.
	 \end{equation*} 
\end{cor}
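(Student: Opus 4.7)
The plan is to reduce the statement to Propositions \ref{c3eq2} and \ref{c5eq3} by identifying the leading-order behavior of $D\phi_{\varepsilon}$. Introduce the profile
\begin{equation*}
\Psi(x,t,y,\tau,z) := D\varphi_{0}(x,t) + D_{y}\varphi_{1}(x,y,\tau) + D_{z}\varphi_{2}(x,y,\tau,z);
\end{equation*}
the regularity of the $\varphi_{i}$ ensures $\Psi \in \mathcal{C}(\overline{Q}; \mathcal{C}_{per}(\Gamma\times Z;\mathbb{R}))^{d}$. A direct chain-rule computation yields
\begin{equation*}
D\phi_{\varepsilon} = \Psi^{\varepsilon} + \varepsilon\, r_{\varepsilon},
\end{equation*}
with $r_{\varepsilon}$ uniformly bounded in $L^{\infty}(Q;\mathbb{R})^{d}$; consequently $D\phi_{\varepsilon} - \Psi^{\varepsilon} \to 0$ in $L^{B}(Q;\mathbb{R})^{d}$ as $\varepsilon\to 0$.

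For the first assertion, the continuity estimate of Proposition \ref{c3eq2} gives
\begin{equation*}
\|a^{\varepsilon}(-,\cdot,D\phi_{\varepsilon}) - a^{\varepsilon}(-,\cdot,\Psi^{\varepsilon})\|_{L^{\widetilde{B}}(Q)^{d}} \leq c\,\|D\phi_{\varepsilon} - \Psi^{\varepsilon}\|_{L^{B}(Q)^{d}}^{\rho} \longrightarrow 0.
\end{equation*}
Since $\Psi \in \mathcal{C}(\overline{Q}; \mathcal{C}_{per}(\Gamma\times Z;\mathbb{R}))^{d}$, Proposition \ref{c5eq3}(i) applied componentwise yields $a^{\varepsilon}(-,\cdot,\Psi^{\varepsilon}) \rightharpoonup a(-,\cdot,\Psi)$ in $L^{\widetilde{B}}(Q)^{d}$-weakly reiteratively 2s. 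Adding a strongly vanishing perturbation to a weakly 2s convergent sequence preserves its limit, whence
\begin{equation*}
a^{\varepsilon}(-,\cdot,D\phi_{\varepsilon}) \rightharpoonup a(-,\cdot, D\varphi_{0} + D_{y}\varphi_{1} + D_{z}\varphi_{2}) \quad \textup{in}\; L^{\widetilde{B}}(Q)^{d}\textup{-weakly reiteratively 2s,}
\end{equation*}
which is the first claim.

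For the second claim I split
\begin{equation*}
\int_{Q} a^{\varepsilon}(-,\cdot,D\phi_{\varepsilon})\cdot v_{\varepsilon}\,dxdt = I_{1}(\varepsilon) + I_{2}(\varepsilon)
\end{equation*}
by inserting and subtracting $a^{\varepsilon}(-,\cdot,\Psi^{\varepsilon})$. The term $I_{1}(\varepsilon)$ vanishes by the generalized H\"{o}lder inequality (\ref{c2eq19}), the strong $L^{\widetilde{B}}$-convergence to zero established above, and the uniform $L^{B}$-bound on $(v_{\varepsilon})$ inherited from its weak reiterated two-scale convergence. For $I_{2}(\varepsilon) = \int_{Q} g^{\varepsilon}\cdot v_{\varepsilon}\,dxdt$ with $g(x,t,y,\tau,z) := a(y,\tau,z,\Psi(x,t,y,\tau,z))$, Proposition \ref{c5eq3}(i) guarantees $g \in \mathcal{C}(\overline{Q}; \mathfrak{X}^{\widetilde{B},\infty}_{per}(\mathbb{R}^{d+1}_{y,\tau}; \mathcal{C}_{b}(\mathbb{R}^{d}_{z})))^{d}$, and treating $g^{\varepsilon}$ as an admissible test function against the weakly reiteratively 2s convergent sequence $(v_{\varepsilon})$ delivers
\begin{equation*}
I_{2}(\varepsilon) \longrightarrow \iiint_{Q\times \Gamma\times Z} a(-,\cdot, D\varphi_{0}+D_{y}\varphi_{1}+D_{z}\varphi_{2})\cdot v_{0}\,dxdt\,dyd\tau dz.
\end{equation*}

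The main obstacle is this last step: Proposition \ref{c4eq6} supplies the required convergence $\int_{Q} g^{\varepsilon}\cdot v_{\varepsilon}\,dxdt \to \iiint g\cdot v_{0}$ only for test functions in the algebraic tensor product $\mathcal{C}(\overline{Q})\otimes \mathfrak{X}^{\widetilde{B},\infty}_{per}(\mathbb{R}^{d+1}_{y,\tau}; \mathcal{C}_{b}(\mathbb{R}^{d}_{z}))$, whereas $g$ sits only in the larger completion $\mathcal{C}(\overline{Q}; \mathfrak{X}^{\widetilde{B},\infty}_{per}(\mathbb{R}^{d+1}_{y,\tau}; \mathcal{C}_{b}(\mathbb{R}^{d}_{z})))$ because under (\ref{c1eq6}) the function $a$ is only measurable in its first three arguments. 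The gap is bridged by a standard density argument: approximate $g$ uniformly by a sequence $(g_{n})$ drawn from the tensor product, invoke Proposition \ref{c4eq6} for each $g_{n}$, and close the double limit through the trace bound $\|(g-g_{n})^{\varepsilon}\|_{L^{\widetilde{B}}(Q)^{d}} \leq c\|g-g_{n}\|$ and the $L^{B}$-boundedness of $(v_{\varepsilon})$.
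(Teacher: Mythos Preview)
Your argument is correct and follows the same route as the paper: split off the leading profile $\Psi^{\varepsilon}$, control the remainder via Proposition \ref{c3eq2}, and invoke Proposition \ref{c5eq3}(i) / Proposition \ref{c4eq6} for the main term. The paper writes this out by introducing $I(\varepsilon)=I_{1}(\varepsilon)+I_{2}(\varepsilon)$ (and $I'(\varepsilon)=I'_{1}(\varepsilon)+I'_{2}(\varepsilon)$ for the second claim) in exactly the way you do.

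One point worth noting: the gap you flag in the last paragraph --- that Proposition \ref{c4eq6} is stated only for test functions in the algebraic tensor product $\mathcal{C}(\overline{Q})\otimes \mathfrak{X}^{\widetilde{B},\infty}_{per}$ while $g$ lies in the completion $\mathcal{C}(\overline{Q};\mathfrak{X}^{\widetilde{B},\infty}_{per})$ --- is real, and the paper simply asserts ``by Proposition \ref{c4eq6} one gets $\lim_{\varepsilon\to 0} I'_{2}(\varepsilon)=0$'' without addressing it. Your density argument (uniform approximation of $g$ by tensor-product functions on the compact $\overline{Q}$, combined with the trace bound and the $L^{B}$-boundedness of $(v_{\varepsilon})$) is exactly what is needed and is more careful than the paper on this step.
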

\begin{proof}
	Let $\mathbf{\varphi}\in L^{B}(Q;\mathcal{C}_{per}(\Gamma\times Z;\mathbb{R}))^{d}$ and put 
	\begin{equation*}
	I(\varepsilon)= \int_{Q} a^{\varepsilon}(-,\cdot, D\phi_{\varepsilon})\,\mathbf{\varphi}^{\varepsilon}\, dxdt - \iiint_{Q\times \Gamma\times Z} a(-,\cdot, D\varphi_{0} + D_{y}\varphi_{1} + D_{z}\varphi_{2})\,\mathbf{\varphi}\,dxdt dy d\tau dz, \;\; \varepsilon>0.
	\end{equation*}
	we have $I(\varepsilon) = I_{1}(\varepsilon) + I2(\varepsilon)$, where 
	\begin{equation*}
	\begin{array}{l}
	I_{1}(\varepsilon) = \int_{Q} \left[a^{\varepsilon}(-,\cdot, D\phi_{\varepsilon}) - a^{\varepsilon}(-,\cdot, D\varphi_{0}+ (D_{y}\varphi_{1})^{\varepsilon} + (D_{z}\varphi_{2})^{\varepsilon})\right]\mathbf{\varphi}^{\varepsilon}\, dxdt \;\;\; \textup{and} \\
	I_{2}(\varepsilon) = \int_{Q} a^{\varepsilon}(-,\cdot, D\varphi_{0}+ (D_{y}\varphi_{1})^{\varepsilon} + (D_{z}\varphi_{2})^{\varepsilon})\mathbf{\varphi}^{\varepsilon}\, dxdt - \iiint_{Q\times \Gamma\times Z} a(-,\cdot, D\varphi_{0} + D_{y}\varphi_{1} + D_{z}\varphi_{2})\,\mathbf{\varphi}\,dxdt dy d\tau dz.
	\end{array}
	\end{equation*}
	By Proposition \ref{c4eq6}, we get $a^{\varepsilon}(-,\cdot, D\varphi_{0}+ (D_{y}\varphi_{1})^{\varepsilon} + (D_{z}\varphi_{2})^{\varepsilon}) \rightharpoonup a(-,\cdot, D\varphi_{0}+ D_{y}\varphi_{1} + D_{z}\varphi_{2})$ in  $L^{B}(Q)^{d}$-weakly reiteratively 2s when $\varepsilon\to 0$; that is, $\lim_{\varepsilon\to 0} I_{2}(\varepsilon)=0$. For $I_{1}(\varepsilon)$, since 
	\begin{equation*}
	D\phi_{\varepsilon} =  D\varphi_{0}+ (D_{y}\varphi_{1})^{\varepsilon} + (D_{z}\varphi_{2})^{\varepsilon} + \varepsilon(D_{x}\varphi_{1})^{\varepsilon} + \varepsilon(D_{y}\varphi_{2})^{\varepsilon} + \varepsilon^{2}(D_{x}\varphi_{2})^{\varepsilon},
	\end{equation*}
	 applying the H\"{o}lder's inequality (\ref{c2eq19}) and taking account Proposition \ref{c3eq2} yields 
	 \begin{equation*}
	 |I_{1}(\varepsilon)| \leq 2c \|\mathbf{\varphi}\|_{L^{B}(Q;\mathcal{C}_{per}(\Gamma\times Z;\mathbb{R}))^{d}} \|\varepsilon D_{x}\varphi_{1}^{\varepsilon} + \varepsilon D_{y}\varphi_{2}^{\varepsilon} + \varepsilon^{2} D_{x}\varphi_{2}^{\varepsilon}\|^{\rho}_{B,Q}
	 \end{equation*} 
	 and $ |I_{1}(\varepsilon)| \rightarrow 0$ as $\varepsilon\to 0$, and this proves the first statement. 
	 \par For the second limit, we repeat a similar decomposition where $\mathbf{\varphi}^{\varepsilon}$ is replaced by $v_{\varepsilon}$. Indeed one can define 
	 \begin{equation*}
	 I'(\varepsilon)= \int_{Q} a^{\varepsilon}(-,\cdot, D\phi_{\varepsilon})\,v_{\varepsilon}\, dxdt - \iiint_{Q\times \Gamma\times Z} a(-,\cdot, D\varphi_{0} + D_{y}\varphi_{1} + D_{z}\varphi_{2})\,v_{0}\,dxdt dy d\tau dz,
	 \end{equation*}
	 thus the proof will be concluded if we show that $\lim_{\varepsilon\to 0} I'(\varepsilon)=0$. However $I'(\varepsilon)= I^{\prime}_{1}(\varepsilon) + I^{\prime}_{2}(\varepsilon)$. Since $a(-,\cdot, D\varphi_{0} + D_{y}\varphi_{1} + D_{z}\varphi_{2})$ lies in $\mathcal{C}(\overline{Q}; \mathfrak{X}^{\widetilde{B},\infty}_{per}(\mathbb{R}^{d+1}_{y,\tau},\mathcal{C}_{b}(\mathbb{R}^{d}_{z})))$, by Proposition \ref{c4eq6} one gets $\lim_{\varepsilon\to 0} I^{'}_{2}(\varepsilon)=0$. By Proposition \ref{c3eq2}, we have 
	 \begin{equation*}
	 \|a^{\varepsilon}(-,\cdot, D\phi_{\varepsilon}) - a^{\varepsilon}(-,\cdot, D\varphi_{0}+ (D_{y}\varphi_{1})^{\varepsilon} + (D_{z}\varphi_{2})^{\varepsilon})\|_{\widetilde{B},Q} \leq c\,\|\varepsilon D_{x}\varphi_{1}^{\varepsilon} + \varepsilon D_{y}\varphi_{2}^{\varepsilon} + \varepsilon^{2} D_{x}\varphi_{2}^{\varepsilon}\|^{\rho}_{B,Q}.
	 \end{equation*}
	 Hence, $\lim_{\varepsilon\to 0} I^{\prime}_{1}(\varepsilon)=0$, and the proof is completed.
\end{proof}

\subsection{Main result}
Following the notation in Section \ref{sect2}, we set   
\begin{equation*}
\mathbb{F}^{1}_{0}L^{B} = \mathcal{W}_{0}\left(0,T; W^{1}_{0}L^{B}(\Omega;\mathbb{R})\right)\times L^{B}\left(Q\times\varTheta; W^{1}_{\#}L^{B}_{per}(Y;\mathbb{R})\right)\times L^{B}(Q\times\varTheta ; L^{B}_{per}(Y; W^{1}_{\#}L^{B}(Z)))
\end{equation*}
where $\mathcal{W}_{0}\left(0,T; W^{1}_{0}L^{B}(\Omega;\mathbb{R})\right) = \left\{v\in\mathcal{W}\left(0,T; W^{1}_{0}L^{B}(\Omega;\mathbb{R})\right) \, : \, v(0)=0 \right\}$ (see Remark \ref{c3rem1}), and then $\mathbb{F}^{1}_{0}L^{B}$ is a Banach space under the norm 
\begin{equation*}
\|\mathbf{u}\|_{\mathbb{F}^{1}_{0}L^{B}} = \|Du_{0}\|_{B,Q} + \|D_{y}u_{1}\|_{B,Q\times \Gamma} + \|D_{z}u_{2}\|_{B,Q\times \Gamma\times Z} \quad \left(\mathbf{u}=(u_{0},u_{1},u_{2})\in \mathbb{F}^{1}_{0}L^{B} \right).
\end{equation*}
Besides, thanks to the density of $\mathcal{D}(Q;\mathbb{R})$ in $\mathcal{W}\left(0,T; W^{1}_{0}L^{B}(\Omega;\mathbb{R})\right)$ and that of $\mathcal{C}^{\infty}_{per}(Y)/\mathbb{C}$ (resp. $\mathcal{C}^{\infty}_{per}(Z)/\mathbb{C}$) in $W^{1}_{\#}L^{B}_{per}(Y)$ (resp. $W^{1}_{\#}L^{B}_{per}(Z)$), the space 
\begin{equation*}
F^{\infty}_{0} = \mathcal{D}(Q;\mathbb{R}) \times \left[\mathcal{D}(Q;\mathbb{R})\otimes\mathcal{C}^{\infty}_{per}(\varTheta)\otimes\mathcal{C}^{\infty}_{per}(Y)/\mathbb{C} \right] \times \left[\mathcal{D}(Q;\mathbb{R})\otimes\mathcal{C}^{\infty}_{per}(\varTheta)\otimes\mathcal{C}^{\infty}_{per}(Y)/\mathbb{C}\otimes\mathcal{C}^{\infty}_{per}(Z)/\mathbb{C} \right]
\end{equation*}
is dense in $\mathbb{F}^{1}_{0}L^{B}$. Finally, for $\mathbf{v}=(v_{0},v_{1},v_{2})\in \mathbb{F}^{1}_{0}L^{B}$ denote $\mathbb{D}\mathbf{v} = Dv_{0} + D_{y}v_{1} + D_{z}v_{2}$, hypotheses (\ref{c1eq3})-(\ref{c1eq6}) drive to 
\begin{lem}\label{c5lem1}
	The variational problem
	\begin{equation}\label{c5lem2}
	\left\{\begin{array}{l}
	\mathbf{u}=(u_{0},u_{1},u_{2})\in \mathbb{F}^{1}_{0}L^{B} \\
	
	\\
	\int_{0}^{T} \left(u^{\prime}_{0}(t), v_{0}(t)\right) dt + \iiint_{Q\times \Gamma\times Z} a(-,\cdot, \mathbb{D}\mathbf{u})\cdot\mathbb{D}\mathbf{v}\, dxdt dyd\tau dz = \int_{0}^{T} \left(f(t), v_{0}(t)\right) dt,
	\end{array}\right.
	\end{equation}
	for all $\mathbf{v} = (v_{0}, v_{1}, v_{2}) \in \mathbb{F}^{1}_{0}L^{B}$, has at most one solution. 
\end{lem}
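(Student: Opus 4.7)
The plan is the standard energy/monotonicity argument in the three-scale setting. Suppose $\mathbf{u}=(u_0,u_1,u_2)$ and $\mathbf{u}'=(u_0',u_1',u_2')$ are two solutions of (\ref{c5lem2}), and let $\mathbf{w}=(w_0,w_1,w_2):=\mathbf{u}-\mathbf{u}'\in\mathbb{F}^{1}_{0}L^{B}$. Subtracting the two weak formulations and testing with $\mathbf{v}=\mathbf{w}$, the right-hand sides cancel and one obtains
\begin{equation*}
\int_{0}^{T}\bigl(w_{0}'(t),w_{0}(t)\bigr)\,dt
+\iiint_{Q\times\Gamma\times Z}\bigl[a(-,\cdot,\mathbb{D}\mathbf{u})-a(-,\cdot,\mathbb{D}\mathbf{u}')\bigr]\cdot\mathbb{D}\mathbf{w}\,dxdt\,dy\,d\tau\,dz=0.
\end{equation*}

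First I would handle the time-derivative term. Since $w_0\in\mathcal{W}_0(0,T;W^{1}_{0}L^{B}(\Omega))$ with $w_{0}(0)=0$, Remark \ref{c3rem1} gives the continuous embedding into $\mathcal{C}([0,T];L^{2}(\Omega))$, and the standard Lions-type integration-by-parts identity yields
\begin{equation*}
\int_{0}^{T}\bigl(w_{0}'(t),w_{0}(t)\bigr)\,dt=\tfrac{1}{2}\|w_{0}(T)\|_{L^{2}(\Omega)}^{2}.
\end{equation*}
Next, I would apply the monotonicity inequality (\ref{c5eq2}) of Proposition \ref{c5eq3} to the second term, producing the lower bound $c_{2}\iiint_{Q\times\Gamma\times Z}B(|\mathbb{D}\mathbf{w}|)\,dxdt\,dy\,d\tau\,dz$. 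Combining the two estimates forces both $\|w_{0}(T)\|_{L^{2}(\Omega)}=0$ and $B(|\mathbb{D}\mathbf{w}|)=0$ almost everywhere, hence $\mathbb{D}\mathbf{w}=Dw_{0}+D_{y}w_{1}+D_{z}w_{2}=0$ a.e.\ in $Q\times Y\times\varTheta\times Z$.

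The final step is the usual scale-separation peeling, which is where one must be slightly careful because the components live in different Orlicz spaces. Integrating the identity $Dw_{0}+D_{y}w_{1}+D_{z}w_{2}=0$ over $Y\times Z$ kills the last two pieces by the periodicity of $w_{1}$ in $Y$ and of $w_{2}$ in $Z$, leaving $Dw_{0}=0$; since $w_{0}(t,\cdot)\in W^{1}_{0}L^{B}(\Omega)$ for a.e.\ $t$, Poincar\'e (or simply the zero trace) yields $w_{0}\equiv 0$. The residual identity $D_{y}w_{1}+D_{z}w_{2}=0$ integrated over $Z$ gives $D_{y}w_{1}=0$; together with $w_{1}\in W^{1}_{\#}L^{B}_{per}(Y)$ (zero $Y$-mean) this forces $w_{1}\equiv 0$. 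Finally $D_{z}w_{2}=0$ and the zero $Z$-mean condition yield $w_{2}\equiv 0$, whence $\mathbf{u}=\mathbf{u}'$.

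The main obstacle I expect is purely of a technical nature: justifying the integration-by-parts formula $\int_0^T(w_0',w_0)dt=\tfrac12\|w_0(T)\|_{L^2}^2$ in the Orlicz-Bochner framework $\mathcal{W}_0(0,T;W^{1}_{0}L^{B}(\Omega))$, since the classical Lions lemma is usually stated for $L^{p}$-Bochner spaces. This reduces to approximating $w_0$ by smooth time-truncations in $\mathcal{D}([0,T];W^{1}_{0}L^{B}(\Omega))$ and invoking the continuous embedding into $\mathcal{C}([0,T];L^{2}(\Omega))$ from Remark \ref{c3rem1}; everything else is essentially a direct application of Proposition \ref{c5eq3}.
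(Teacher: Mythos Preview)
Your argument is correct and follows essentially the same route as the paper: the paper's proof (given in Remark~\ref{rem1} rather than immediately after the lemma) subtracts the two formulations, tests with the difference, uses the identity $\int_0^T(w_0',w_0)\,dt=\tfrac12\|w_0(T)\|_{L^2(\Omega)}^2$, and applies the strict monotonicity (\ref{c1eq5})(ii) to force $\mathbb{D}\mathbf{w}=0$, then appeals to \cite[Remark~4.4]{tacha4} for the scale-separation step you spell out explicitly. Your detailed peeling via successive integrations over $Y\times Z$ and $Z$ is exactly what that reference contains, and your caveat about the Lions integration-by-parts formula in the Orlicz--Bochner setting is one the paper itself glosses over.
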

We are now in a position to state and prove the main result in the present section.
\begin{thm}
	Under hypoyheses (\ref{c1eq3})-(\ref{c1eq6}), and for each $\varepsilon>0$, let $u_{\varepsilon}$ be the unique solution to the initial boundary value problem (\ref{c1eq2}) (see Theorem \ref{c3eq3}). Then, as $\varepsilon\to 0$,
	\begin{equation}\label{c5eq5}
	u_{\varepsilon} \rightharpoonup u_{0} \;\;\; \textup{in}\; L^{B}(0,T; W^{1}_{0}L^{B}(\Omega;\mathbb{R}))-\textup{weakly},
	\end{equation}
	\begin{equation}\label{c5eq6}
	\dfrac{\partial u_{\varepsilon}}{\partial t} \rightharpoonup \dfrac{\partial u_{0}}{\partial t} \;\;\; \textup{in}\; L^{\widetilde{B}}(0,T; W^{-1}L^{\widetilde{B}}(\Omega;\mathbb{R}))-\textup{weakly},
	\end{equation}
	\begin{equation}\label{c5eq7}
	Du_{\varepsilon} \rightharpoonup \mathbb{D}\mathbf{u} = Du_{0} + D_{y}u_{1} + D_{z}u_{2} \;\;\; \textup{in}\; L^{B}(Q)^{d}-\textup{weakly\, reiteratively\, 2s}
	\end{equation}
	where $\mathbf{u}=(u_{0},u_{1},u_{2})\in \mathbb{F}^{1}_{0}L^{B}$ is the unique solution to the variational problem in Lemma \ref{c5lem1}.
\end{thm}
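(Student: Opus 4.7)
The argument proceeds in four main stages: a priori estimates, extraction of two-scale limits, passage to the limit for test functions of Allaire--Briane type, and a Minty--type monotonicity argument to identify the nonlinear limit.

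\textbf{Stage 1 (a priori estimates).} Using $u_\varepsilon$ itself as test function in the weak formulation of (\ref{c1eq2}) and invoking (\ref{c1eq4})--(\ref{c1eq5})(ii) with $\lambda' = 0$ to obtain $a^\varepsilon(-,\cdot,Du_\varepsilon)\cdot Du_\varepsilon \geq c_2 B(|Du_\varepsilon|)$, together with Young's inequality on the right-hand side duality pairing, I would derive
\begin{equation*}
\tfrac{1}{2}\|u_\varepsilon(T)\|_{L^2(\Omega)}^2 + c_2\int_Q B(|Du_\varepsilon|)\,dx dt \;\leq\; \int_0^T \widetilde{B}\bigl(\gamma\|f(t)\|_{W^{-1}L^{\widetilde{B}}}\bigr)\,dt,
\end{equation*}
as in the derivation of (\ref{c2eq14}). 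Lemma \ref{c2lem2} then yields a uniform bound for $(Du_\varepsilon)$ in $L^B(Q)^d$, so $(u_\varepsilon)$ is bounded in $L^B(0,T;W^1_0L^B(\Omega))$. From the equation, $\partial_t u_\varepsilon = f + \mathrm{div}\,a^\varepsilon(-,\cdot,Du_\varepsilon)$, and the growth bound (\ref{c1eq5})(i) combined with Proposition \ref{c3eq2} gives a uniform bound on $\partial_t u_\varepsilon$ in $L^{\widetilde{B}}(0,T;W^{-1}L^{\widetilde{B}}(\Omega))$.

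\textbf{Stage 2 (compactness).} Applying Proposition \ref{c3eq4} to $(u_\varepsilon)$ yields, up to a subsequence, the three convergences (\ref{c5eq5})--(\ref{c5eq7}) for some $(u_0,u_1,u_2) \in \mathbb{F}^{1}_{0}L^B$. By Proposition \ref{c3eq2} the sequence $a^\varepsilon(-,\cdot,Du_\varepsilon)$ is bounded in $L^{\widetilde{B}}(Q)^d$, hence (up to a further subsequence) weakly reiteratively two-scale converges to some $\sigma \in L^{\widetilde{B}}(Q;L^{\widetilde{B}}_{per}(\Gamma\times Z))^d$.

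\textbf{Stage 3 (limit equation with oscillating test functions).} For $(\varphi_0,\varphi_1,\varphi_2) \in F^{\infty}_{0}$, set $\phi_\varepsilon = \varphi_0 + \varepsilon\varphi_1^\varepsilon + \varepsilon^2\varphi_2^\varepsilon$. Using $\phi_\varepsilon$ as test function in (\ref{c1eq2}) and passing to the limit using (\ref{c5eq5})--(\ref{c5eq6}) for the time derivative term, the reiterated two-scale convergence of $a^\varepsilon(-,\cdot,Du_\varepsilon)$ to $\sigma$, and the strong reiterated two-scale convergence of $D\phi_\varepsilon$ to $D\varphi_0 + D_y\varphi_1 + D_z\varphi_2 = \mathbb{D}\boldsymbol{\varphi}$, I would obtain
\begin{equation*}
\int_0^T \bigl(u_0'(t),\varphi_0(t)\bigr)dt + \iiint_{Q\times\Gamma\times Z} \sigma\cdot\mathbb{D}\boldsymbol{\varphi}\,dxdt\,dy d\tau dz = \int_0^T \bigl(f(t),\varphi_0(t)\bigr)dt,
\end{equation*}
which extends by density of $F^{\infty}_{0}$ in $\mathbb{F}^{1}_{0}L^B$ to all test triples in $\mathbb{F}^{1}_{0}L^B$. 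It remains to show $\sigma = a(-,\cdot,\mathbb{D}\mathbf{u})$.

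\textbf{Stage 4 (Minty's monotonicity trick --- the main obstacle).} The identification of $\sigma$ is the technically delicate step, because the nonlinear $a$ is only $\widetilde{B}$-continuous and one cannot pass to the limit directly in $a^\varepsilon(-,\cdot,Du_\varepsilon)$. Following the standard Minty--Browder scheme adapted to the Orlicz setting, I would:
\begin{itemize}
\item Test (\ref{c1eq2}) with $u_\varepsilon$ to produce $\int_Q a^\varepsilon(-,\cdot,Du_\varepsilon)\cdot Du_\varepsilon\,dxdt$, then use the chain rule identity $\int_0^T (u_\varepsilon'(t),u_\varepsilon(t))dt = \tfrac{1}{2}\|u_\varepsilon(T)\|_{L^2}^2$ together with lower semicontinuity to obtain
\begin{equation*}
\limsup_{\varepsilon\to 0}\int_Q a^\varepsilon(-,\cdot,Du_\varepsilon)\cdot Du_\varepsilon\,dxdt \;\leq\; \int_0^T \bigl(f,u_0\bigr)dt - \tfrac{1}{2}\|u_0(T)\|_{L^2}^2.
\end{equation*}
\item Combine with the limit equation tested by $\mathbf{u}$ (after justifying admissibility of this test, since $\mathbf{u} \in \mathbb{F}^{1}_{0}L^B$) to obtain $\limsup_{\varepsilon\to 0}\int_Q a^\varepsilon(-,\cdot,Du_\varepsilon)\cdot Du_\varepsilon\,dxdt \leq \iiint_{Q\times\Gamma\times Z}\sigma\cdot\mathbb{D}\mathbf{u}\,dxdt\,dyd\tau dz$.
\item For any $\boldsymbol{\psi} = (\psi_0,\psi_1,\psi_2) \in F^{\infty}_{0}$ set $\Psi_\varepsilon = \psi_0 + \varepsilon\psi_1^\varepsilon + \varepsilon^2\psi_2^\varepsilon$ and exploit monotonicity (\ref{c1eq5})(ii) to write $\int_Q [a^\varepsilon(-,\cdot,Du_\varepsilon)-a^\varepsilon(-,\cdot,D\Psi_\varepsilon)]\cdot(Du_\varepsilon - D\Psi_\varepsilon)\,dxdt \geq 0$, then pass to the limit using Proposition~\ref{c5eq3} and the corollary that follows it.
\item This yields $\iiint_{Q\times\Gamma\times Z}\bigl[\sigma - a(-,\cdot,\mathbb{D}\boldsymbol{\psi})\bigr]\cdot(\mathbb{D}\mathbf{u} - \mathbb{D}\boldsymbol{\psi})\,dxdt\,dyd\tau dz \geq 0$ for all $\boldsymbol{\psi} \in F^{\infty}_{0}$, hence for all $\boldsymbol{\psi} \in \mathbb{F}^{1}_{0}L^B$. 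Replacing $\boldsymbol{\psi}$ by $\mathbf{u} \pm \lambda\boldsymbol{\psi}$ and letting $\lambda \to 0^+$ gives $\sigma = a(-,\cdot,\mathbb{D}\mathbf{u})$ a.e.
\end{itemize}
The main technical difficulty is the Minty argument in the Orlicz-Sobolev setting: in particular, ensuring that the energy identity arising from testing with $u_\varepsilon$ passes to the limit correctly (handling the $L^2$-term at $t=T$ requires the embedding mentioned in Remark \ref{c3rem1}), and that the approximation of $\mathbf{u}$ by smooth triples $\boldsymbol{\psi}$ is compatible with (\ref{c5eq1}) so that $a(-,\cdot,\mathbb{D}\boldsymbol{\psi}) \to a(-,\cdot,\mathbb{D}\mathbf{u})$ strongly in $L^{\widetilde{B}}$. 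Finally, uniqueness for (\ref{c5lem2}) from Lemma \ref{c5lem1} guarantees that the whole sequence (not just a subsequence) converges, completing the proof.
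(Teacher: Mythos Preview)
Your proof is correct and follows the same overall strategy as the paper: a priori estimates via testing with $u_\varepsilon$, reiterated two-scale compactness from Proposition~\ref{c3eq4}, oscillating test functions $\phi_\varepsilon=\psi_0+\varepsilon\psi_1^\varepsilon+\varepsilon^2\psi_2^\varepsilon$, and a Minty--Browder linearization to identify the nonlinear limit, with uniqueness from Lemma~\ref{c5lem1} upgrading subsequential to full convergence.

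The one organizational difference worth noting: you introduce the auxiliary weak reiterated two-scale limit $\sigma$ of $a^\varepsilon(-,\cdot,Du_\varepsilon)$, derive the limit equation with $\sigma$, and then run Minty's trick to show $\iiint(\sigma-a(-,\cdot,\mathbb{D}\mathbf{u}))\cdot\mathbb{D}\mathbf{w}=0$. The paper instead bypasses $\sigma$ entirely: it writes the monotonicity inequality
\[
\int_0^T\!\bigl(f-u_\varepsilon',\,u_\varepsilon-\phi_\varepsilon\bigr)dt-\int_Q a^\varepsilon(-,\cdot,D\phi_\varepsilon)\cdot(Du_\varepsilon-D\phi_\varepsilon)\,dxdt\;\ge\;0
\]
and passes to the limit directly in this inequality, using the corollary after Proposition~\ref{c5eq3} (which handles the product $a^\varepsilon(-,\cdot,D\phi_\varepsilon)\cdot v_\varepsilon$ for \emph{any} weakly reiteratively two-scale convergent $v_\varepsilon$) together with the lower semicontinuity of $\|u_\varepsilon(T)\|_{L^2(\Omega)}^2$ that you also invoke. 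Both routes land on the same key inequality (\ref{c5eq9}) and finish with the same substitution $\mathbf{v}=\mathbf{u}-\zeta\mathbf{w}$, $\zeta\to 0^+$. The paper's variant is marginally more economical (one fewer compactness extraction, no need to test the $\sigma$-equation with $\mathbf{u}$ itself), while yours is the more classical textbook presentation of Minty's method; the mathematical content is the same.
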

\begin{proof}
The first point is to check that the sequence $(u_{\varepsilon})_{\varepsilon>0}$ is bounded in $\mathcal{W}_{0}(0,T; W^{1}_{0}L^{B}(\Omega;\mathbb{R}))$. To this end, observe that, for $0<\varepsilon\leq 1$ arbitrarily fixed, and reffering to (\ref{c2eq17}), the initial problem (\ref{c1eq2}) implies the variational formulation 
\begin{equation}\label{c5eq4}
\left(u^{\prime}_{\varepsilon}(t), v\right) + \int_{\Omega} a^{\varepsilon}(-,\cdot, Du_{\varepsilon})\cdot Dv\, dx = \int_{\Omega} f(t)v\,dx
\end{equation}	
for all $v\in W^{1}_{0}L^{B}(\Omega;\mathbb{R})$. Taking $v=u_{\varepsilon}(t)$ $(0<t< T)$ and using properties (\ref{c1eq3})-(\ref{c1eq5}), it follows as in a priori estimate (\ref{apenb3}) (see Appendix B) : 
\begin{equation*}
\|u_{\varepsilon}(t)\|^{2}_{L^{2}(\Omega)} + c_{2}\int_{0}^{T} B\left(\|Du_{\varepsilon}(t)\|_{L^{B}(\Omega)}\right) dt \leq \int_{0}^{T} \widetilde{B}\left(\gamma\|f(t)\|_{W^{-1}L^{\widetilde{B}}(\Omega;\mathbb{R})}\right) dt,
\end{equation*}
where $\gamma=\frac{2}{c_{2}}$ if $0<c_{2}<1$ and $\gamma=2c_{2}$ if $c_{2}\geq 1$. Hence, by Lemma \ref{c2lem2}, one has 
\begin{equation*}
\|u_{\varepsilon}(t)\|^{2}_{L^{2}(\Omega)} + c_{2}\|u_{\varepsilon}(t)\|^{\rho}_{L^{B}(0,T; W^{1}_{0}L^{B}(\Omega;\mathbb{R}))} \leq \int_{0}^{T} \widetilde{B}\left(\gamma\|f(t)\|_{W^{-1}L^{\widetilde{B}}(\Omega;\mathbb{R})}\right) dt,
\end{equation*}
$(0<t<T)$ with $\rho=\rho_{1}$ or $\rho_{2}$. Thus 
\begin{equation*}
\sup_{0<\varepsilon\leq 1} \left(\|u_{\varepsilon}(t)\|_{L^{B}(0,T; W^{1}_{0}L^{B}(\Omega;\mathbb{R}))}\right) < \infty,
\end{equation*}
which, combined with (\ref{c1eq4}) and (\ref{c3eq1}), implies 
\begin{equation*}
\sup_{0<\varepsilon\leq 1} \left(\|a^{\varepsilon}(-,\cdot, Du_{\varepsilon})\|_{L^{\widetilde{B}}(Q)^{d}}\right) < \infty.
\end{equation*}
This being so, we deduce that
\begin{equation*}
\sup_{0<\varepsilon\leq 1}  \left(\left\|\dfrac{\partial u_{\varepsilon}}{\partial t} \right\|_{L^{\widetilde{B}}(0,T; W^{-1}L^{\widetilde{B}}(\Omega;\mathbb{R}))}  , \, \|\textup{div}a^{\varepsilon}(-,\cdot, Du_{\varepsilon})\|_{L^{\widetilde{B}}(0,T; W^{-1}L^{\widetilde{B}}(\Omega;\mathbb{R}))}\right) < \infty.
\end{equation*}
Therefore, the sequence $(u_{\varepsilon})_{0<\varepsilon \leq 1}$ is bounded in $\mathcal{W}_{0}(0,T; W^{1}_{0}L^{B}(\Omega;\mathbb{R}))$. Then, given an arbitrary fundamental sequence, there are a subsequence $\varepsilon$ and $\mathbf{u}=(u_{0},u_{1},u_{2}) \in \mathbb{F}_{0}^{1}L^{B}$ such that (\ref{c5eq5})-(\ref{c5eq7}) hold as $\varepsilon\to 0$ (see in particular Proposition \ref{c3eq4}).
\par Let us verify that $\mathbf{u}=(u_{0},u_{1},u_{2})$ is a solution of the varaiational problem of Lemma \ref{c5lem1}. Let $\phi = (\psi_{0},\psi_{1},\psi_{2}) \in F_{0}^{\infty}$ and $\phi_{\varepsilon}=\psi_{0}+\varepsilon\psi_{1}^{\varepsilon}+\varepsilon^{2}\psi_{2}^{\varepsilon}$. Then $\phi_{\varepsilon}\in \mathcal{D}(Q;\mathbb{R})$, so take $v=\phi_{\varepsilon}(t)$ ($0<t<T$) in (\ref{c5eq4}), integrate from 0 to $T$ and then use (\ref{c3cor1})
\begin{equation*}
\int_{0}^{T} \left(f(t)- u^{\prime}_{\varepsilon}(t),\, u_{\varepsilon}(t)-\phi_{\varepsilon}(t)\right) dt - \int_{Q} a^{\varepsilon}(-,\cdot, D\phi_{\varepsilon})\cdot (Du_{\varepsilon}-D\phi_{\varepsilon})\,dxdt \, \geq 0.
\end{equation*} 
Since $\int_{0}^{T} (u^{\prime}_{\varepsilon}(t), u_{\varepsilon}(t)) dt = \frac{1}{2}\|u_{\varepsilon}(T)\|^{2}_{L^{2}(\Omega)}$, the preceding inequality is equivalent to
\begin{equation}\label{c5eq8}
\begin{array}{l}
\int_{0}^{T} \left(f(t),\, u_{\varepsilon}(t)-\phi_{\varepsilon}(t)\right) dt + \int_{0}^{T} (u^{\prime}_{\varepsilon}(t), \phi_{\varepsilon}(t)) dt \\

\\
 - \int_{Q} a^{\varepsilon}(-,\cdot, D\phi_{\varepsilon})\cdot (Du_{\varepsilon}-D\phi_{\varepsilon})\,dxdt \, \geq \frac{1}{2}\|u_{\varepsilon}(T)\|^{2}_{L^{2}(\Omega)}.
\end{array}
\end{equation} 
Recalling that 
\begin{equation*}
\dfrac{\partial \phi_{\varepsilon}}{\partial x_{i}} \rightharpoonup \dfrac{\partial \psi_{0}}{\partial x_{i}} + \dfrac{\partial \psi_{1}}{\partial y_{i}} + \dfrac{\partial \psi_{2}}{\partial z_{i}} \;\; \textup{in}\, L^{B}(Q)-\textup{weakly\, reiteratively\, 2s} \;\; (1\leq i\leq d),
\end{equation*}
and
\begin{equation*}
\dfrac{\partial \phi_{\varepsilon}}{\partial t} \rightharpoonup \dfrac{\partial \psi_{0}}{\partial t} + \dfrac{\partial \psi_{1}}{\partial \tau}  \;\; \textup{in}\, L^{B}(Q)-\textup{weakly\, reiteratively\, 2s},
\end{equation*}
hence, $\phi_{\varepsilon} \rightarrow\psi_{0}$ in $L^{B}(0,T; W^{1}_{0}L^{B}(\Omega;\mathbb{R}))$-weakly, up to the subsequence extracted above (as $\varepsilon\to 0$) we have 
\begin{equation*}
\int_{0}^{T} \left(f(t),\, u_{\varepsilon}(t)-\phi_{\varepsilon}(t)\right) dt \rightarrow \int_{0}^{T} \left(f(t),\, u_{0}(t)-\psi_{0}(t)\right) dt,
\end{equation*}
\begin{equation*}
\begin{array}{rcl}
\int_{0}^{T} \left(u^{\prime}_{\varepsilon}(t),\,\phi_{\varepsilon}(t)\right) dt & = & - \int_{Q} u_{\varepsilon}\dfrac{\partial \phi_{\varepsilon}}{\partial t} dxdt \rightarrow - \int_{Q} u_{0}\dfrac{\partial \psi_{0}}{\partial t} dxdt \\
 & = & \int_{0}^{T} \left(u^{\prime}_{0}(t),\,\psi_{0}(t)\right) dt
\end{array}
\end{equation*}
\begin{equation*}
\int_{Q} a^{\varepsilon}(-,\cdot, D\phi_{\varepsilon})\cdot (Du_{\varepsilon}-D\phi_{\varepsilon})\,dxdt \rightarrow \iiint_{Q\times \Gamma\times Z} a(-\cdot, D\phi)\cdot \mathbb{D}(\mathbf{u}-\phi) dxdt dyd\tau dz.
\end{equation*}
Moreover, deducing by (\ref{c5eq5})-(\ref{c5eq6}) that $u_{\varepsilon} \rightarrow u_{0}$ in $\mathcal{W}_{0}(0,T; W^{1}_{0}L^{B}(\Omega;\mathbb{R}))$-weakly, and noting that the transformation $v\rightarrow \|v(T)\|_{L^{2}(\Omega)}$ from $\mathcal{W}_{0}(0,T; W^{1}_{0}L^{B}(\Omega;\mathbb{R}))$ into $\mathbb{R}$ is continuous, we are led to 
\begin{equation*}
\|u_{0}(T)\|^{2}_{L^{2}(\Omega)} \leq \liminf_{\varepsilon\to 0} \|u_{\varepsilon}(T)\|^{2}_{L^{2}(\Omega)}.
\end{equation*}
Therefore, passing to the $\liminf$ in (\ref{c5eq8}), when $\varepsilon\to 0$, and taking into account the arbitrariness of $\phi \in F^{\infty}_{0}$, and the density of $F^{\infty}_{0}$ in $F^{1}_{0}L^{B}$,  yields 
\begin{equation}\label{c5eq9}
\int_{0}^{T} \left(f(t)-u^{\prime}_{0}(t),\, u_{0}(t)-v_{0}(t)\right) dt - \iiint_{Q\times \Gamma\times Z} a(-,\cdot, \mathbb{D}\mathbf{v})\cdot \mathbb{D}(\textbf{u}-\textbf{v})\,dxdtdyd\tau dz \geq 0
\end{equation}
for all $\textbf{v} = (v_{0},v_{1},v_{2}) \in F^{1}_{0}L^{B}$. \\
Now, in (\ref{c5eq9}), choosing $\textbf{v}:= \textbf{u}- \zeta \textbf{w}$, with $\textbf{w} = (w_{0},w_{1},w_{2}) \in F^{1}_{0}L^{B}$, and dividing by $\zeta>0$, we get : 
\begin{equation*}
\int_{0}^{T} \left(f(t)-w^{\prime}_{0}(t),\, w_{0}(t)\right) dt - \iiint_{Q\times \Gamma\times Z} a(-,\cdot, \mathbb{D}\mathbf{u}-\zeta\mathbb{D}\mathbf{w})\cdot \mathbb{D}\textbf{v}\,dxdtdyd\tau dz \geq 0.
\end{equation*}
Using the continuity of $a$ in its last argument and passing to the limit as $\zeta\to 0$, we are led to 
\begin{equation*}
\int_{0}^{T} \left(f(t)-w^{\prime}_{0}(t),\, w_{0}(t)\right) dt - \iiint_{Q\times \Gamma\times Z} a(-,\cdot, \mathbb{D}\mathbf{u})\cdot \mathbb{D}\textbf{v}\,dxdtdyd\tau dz \geq 0,
\end{equation*}
that is for all $\textbf{v} \in F^{1}_{0}L^{B}$. Thus, if we replace $\textbf{v} = (v_{0},v_{1},v_{2}) \in F^{1}_{0}L^{B}$ by $\textbf{v}^{1} = (-v_{0},-v_{1},-v_{2})$, we deduce 
\begin{equation*}
-\int_{0}^{T} \left(f(t)-w^{\prime}_{0}(t),\, w_{0}(t)\right) dt \geq  \iiint_{Q\times \Gamma\times Z} a(-,\cdot, \mathbb{D}\mathbf{u})\cdot \mathbb{D}\textbf{v}^{1}\,dxdtdyd\tau dz, \;\;\; \textup{i.e.},
\end{equation*}
\begin{equation}\label{c5eq10}
-\int_{0}^{T} \left(f(t)-w^{\prime}_{0}(t),\, w_{0}(t)\right) dt \geq  -\iiint_{Q\times \Gamma\times Z} a(-,\cdot, \mathbb{D}\mathbf{u})\cdot \mathbb{D}\textbf{v}\,dxdtdyd\tau dz, \;\;\; \textup{i.e.},
\end{equation}
thus by (\ref{c5eq9}) and (\ref{c5eq10}), the equality follows, i.e. $\textbf{u} = (u_{0},u_{1},u_{2})$ verifies (\ref{c5lem1}). 
\end{proof}
\begin{rem}\label{rem1}
	 Under hypotheses (\ref{c1eq5})-(ii), one can prove that the solution $\mathbf{u}=(u_{0},u_{1},u_{2})\in \mathbb{F}^{1}_{0}L^{B}$ of (\ref{c5lem2}) is unique.  Let $\mathbf{w}=(w_{0},w_{1},w_{2})\in \mathbb{F}^{1}_{0}L^{B}$ be another solution of (\ref{c5lem2}). On the one hand, we have : 
	\begin{equation*}
	\begin{array}{l}
	-\int_{0}^{T} \left(u^{\prime}_{0}(t), w_{0}(t)\right) dt - \iiint_{Q\times \Gamma\times Z} a(-,\cdot, \mathbb{D}\mathbf{u})\cdot\mathbb{D}\mathbf{w}\, dxdt dyd\tau dz  = -\int_{0}^{T} \left(f(t), w_{0}(t)\right) dt, \\
	
	\\
	\int_{0}^{T} \left(u^{\prime}_{0}(t), u_{0}(t)\right) dt + \iiint_{Q\times \Gamma\times Z} a(-,\cdot, \mathbb{D}\mathbf{u})\cdot\mathbb{D}\mathbf{u}\, dxdt dyd\tau dz  = \int_{0}^{T} \left(f(t), u_{0}(t)\right) dt, \\
	
	\\
	\int_{0}^{T} \left(w^{\prime}_{0}(t), w_{0}(t)\right) dt + \iiint_{Q\times \Gamma\times Z} a(-,\cdot, \mathbb{D}\mathbf{w})\cdot\mathbb{D}\mathbf{w}\, dxdt dyd\tau dz  = \int_{0}^{T} \left(f(t), w_{0}(t)\right) dt, \\
	
	\\
	-\int_{0}^{T} \left(w^{\prime}_{0}(t), u_{0}(t)\right) dt - \iiint_{Q\times \Gamma\times Z} a(-,\cdot, \mathbb{D}\mathbf{w})\cdot\mathbb{D}\mathbf{u}\, dxdt dyd\tau dz  = -\int_{0}^{T} \left(f(t), u_{0}(t)\right) dt.
	\end{array}
	\end{equation*}	
	Thus, we have that 
	\begin{equation*}
	\int_{0}^{T} \left(u^{\prime}_{0}(t)-w^{\prime}_{0}(t), u_{0}(t)-w_{0}(t)\right) dt  + \iiint_{Q\times \Gamma\times Z} \left(a(-,\cdot, \mathbb{D}\mathbf{u})-a(-,\cdot, \mathbb{D}\mathbf{w})\right)\cdot(\mathbb{D}\mathbf{u}- \mathbb{D}\mathbf{w})\, dxdt dyd\tau dz = 0.
	\end{equation*}
	Since $\int_{0}^{T} \left(u^{\prime}_{0}(t)-w^{\prime}_{0}(t), u_{0}(t)-w_{0}(t)\right) dt = \frac{1}{2}\|u_{0}(T)-w_{0}(T)\|^{2}_{L^{2}(\Omega)}$ and by (\ref{c1eq5})-(ii) we get
	\begin{equation*}
	\begin{array}{rcl}
	0 & = & \frac{1}{2}\|u_{0}(T)-w_{0}(T)\|^{2}_{L^{2}(\Omega)}  + \iiint_{Q\times \Gamma\times Z} \left(a(-,\cdot, \mathbb{D}\mathbf{u})-a(-,\cdot, \mathbb{D}\mathbf{w})\right)\cdot(\mathbb{D}\mathbf{u}- \mathbb{D}\mathbf{w})\, dxdt dyd\tau dz  \\
	 & = & \iiint_{Q\times \Gamma\times Z} \left(a(-,\cdot, \mathbb{D}\mathbf{u})-a(-,\cdot, \mathbb{D}\mathbf{w})\right)\cdot(\mathbb{D}\mathbf{u}- \mathbb{D}\mathbf{w})\, dxdt dyd\tau dz \\
	  & \geq & c_{2} \iiint_{Q\times \Gamma\times Z} B\left(|\mathbb{D}\mathbf{u}- \mathbb{D}\mathbf{w}|\right)\, dxdt dyd\tau dz.
	\end{array}
	\end{equation*}
	Hence, by the standard arguments (see \cite[Remark 4.4]{tacha4}), we conclude that $\textbf{w}=\textbf{u}$.
\end{rem}
The variational problem in Lemma \ref{c5lem1} is referred to as the \textit{global homogenized problem} for (\ref{c1eq2}). The term \textit{global} is used here to lay emphasis on the fact (\ref{c5lem2}) includes both the local (or microscopic) equations for $u_{1}$ and $u_{2}$, and the macroscopic homogenized equation for $u_{0}$. In the next section, we shall study the \textit{macroscopic homogenized problem}.

\subsection{Macroscopic Homogenized Problem}

Under the periodic hypotheses (\ref{c1eq6}), the global homogenized problem (\ref{c5lem2}) is equivalent to the following three systems\footnote{see \cite{tacha4} for the elliptic case} : 
\begin{equation}\label{c5eq11}
\begin{array}{c}
 \iiint_{Q\times \Gamma\times Z} a(-,\cdot, Du_{0}+D_{y}u_{1}+D_{z}u_{2})\cdot D_{z}v_{2}\, dxdt dyd\tau dz = 0 \\
 
 \\
 \textup{for \, all}\; v_{2} \in L^{B}\left(Q\times\varTheta; L^{B}_{per}(Y;W^{1}_{\#}L^{B}(Z))\right),
\end{array}
\end{equation}
\begin{equation}\label{c5eq12}
\begin{array}{c}
\iint_{Q\times \Gamma} \left(\int_{Z} a(-,\cdot, Du_{0}+D_{y}u_{1}+D_{z}u_{2})\,dz\right)\cdot D_{y}v_{1}\, dxdt dyd\tau  = 0 \\

\\
\textup{for \, all}\; v_{1} \in L^{B}_{per}\left(Q\times\varTheta; W^{1}_{\#}L^{B}(Y)\right),
\end{array}
\end{equation}
\begin{equation}\label{c5eq13}
\begin{array}{c}
	\int_{0}^{T} \left(u^{\prime}_{0}(t), v_{0}(t)\right) dt + \int_{Q} \left(\iint_{\Gamma\times Z} a(-,\cdot, Du_{0}+D_{y}u_{1}+D_{z}u_{2}) dyd\tau dz \right)\cdot Dv_{0}(t)\, dxdt \\
	
	\\
	 = \int_{0}^{T} \left(f(t), v_{0}(t)\right) dt \\

\\
\textup{for \, all}\; v_{0} \in \mathcal{W}_{0}\left(0,T; W^{1}_{0}L^{B}(\Omega;\mathbb{R})\right).
\end{array}
\end{equation}
Now, we are in position to derive a macroscopic homogenized problem (se \cite[Page 23]{kenne1} for the elliptic case). Hence, let  $\xi\in\mathbb{R}^{d}$ be arbitrarily fixed. For a.e. $y\in Y$, consider the following variational cell problem whose solution is denoted by $\pi_{2}(y,\xi)$ : 
\begin{equation}\label{c5eq14}
\left\{\begin{array}{l}
 \textup{find}\; \pi_{2}(y,\xi) \in L^{B}(\varTheta; W^{1}_{\#}L^{B}(Z)) \; \textup{such\, that} \\
 
 \\
 \iint_{\varTheta\times Z} a(y,z,\tau, \xi + D_{z}\pi_{2}(y,\xi))\cdot D_{z}\theta\, dzd\tau = 0 \;\;\, \textup{for\, all}\, \theta\in L^{B}(\varTheta; W^{1}_{\#}L^{B}(Z)). 
\end{array}\right.
\end{equation}
This problem has a unique solution (see \cite{kenne1,moha1,yama1}). \\
Comparing (\ref{c5eq13}) with \ref{c5eq14} for $\xi = Du_{0}(x,t)+D_{y}u_{1}(x,t,y)$, we can consider 
\begin{equation*}
Q\times \mathbb{R}^{d}_{y}\ni (x,t,y) \rightarrow \pi_{2}(y, Du_{0}(x,t)+D_{y}u_{1}(x,t,y)) \in W^{1}_{\#}L^{B}(Z).
\end{equation*}
Hence, defining $h$, for a.e. $y\in Y$, and for any $\xi\in\mathbb{R}^{d}$ by 
\begin{equation*}
h(y,\tau,\xi) := \iint_{\varTheta\times Z} a_{i}(y,z,\tau, \xi + D_{z}\pi_{2}(y,\xi))\,dzd\tau,
\end{equation*} 
(\ref{c5eq12}) becomes 
\begin{equation*}
\iint_{Q\times \Gamma} h(y, \tau, Du_{0}(x,t)+D_{y}u_{1}(x,t,y))\cdot D_{y}v_{1} \, dxdt dyd\tau = 0,
\end{equation*}
for all $v_{1} \in L^{B}_{per}\left(Q\times\varTheta; W^{1}_{\#}L^{B}(Y)\right)$. \\
Consequently, in analogy with the previous steps, one can consider, for any $\xi\in\mathbb{R}^{d}$, the function $\pi_{1}\in L^{B}(\varTheta; W^{1}_{\#}L^{B}(Y))$ solution of the following variational problem (unique since \ref{c1eq5} (ii) holds) : 
\begin{equation}\label{c5eq15}
\left\{\begin{array}{l}
\textup{find}\; \pi_{1}(\xi) \in L^{B}(\varTheta; W^{1}_{\#}L^{B}(Y)) \; \textup{such\, that} \\

\\
\int_{\Gamma} h(y,\tau, \xi + D_{y}\pi_{1}(\xi))\cdot D_{y}\theta\, dyd\tau = 0 \;\;\, \textup{for\, all}\, \theta\in L^{B}(\varTheta; W^{1}_{\#}L^{B}(Y)). 
\end{array}\right.
\end{equation}
Note also that (\ref{c5eq12}) lead us to $u_{1}=\pi_{1}(Du_{0})$. Set, again, for $\xi\in\mathbb{R}^{d}$ 
\begin{equation}\label{c5eq16}
q(\xi) = \int_{\Gamma} h(y,\tau, \xi + D_{y}\pi_{1}(\xi))\,dyd\tau,
\end{equation}
the function $q$ above is well defined. Moreover, it results in (\ref{c5eq13}) and the above cell problems that 
\begin{equation}\label{c5eq17}
\begin{array}{l}
\int_{0}^{T} \left(u^{\prime}_{0}(t), v_{0}(t)\right) dt + \int_{Q} q(Du_{0})\cdot Dv_{0}(t)\, dxdt  = \int_{0}^{T} \left(f(t), v_{0}(t)\right) dt \\

\\
\textup{for \, all}\; v_{0} \in \mathcal{W}_{0}\left(0,T; W^{1}_{0}L^{B}(\Omega;\mathbb{R})\right).
\end{array}
\end{equation}
Finally, as in the elliptic case, we have the following theorem which give us the macroscopic homogenized problem.
\begin{thm}
	For every $\varepsilon>0$, let (\ref{c1eq2}) be such that $a$ and $f$ satisfy (\ref{c1eq1})-(\ref{c1eq6}). Let $u_{0} \in \mathcal{W}_{0}\left(0,T; W^{1}_{0}L^{B}(\Omega;\mathbb{R})\right)$ be the solution defined by means of (\ref{c5lem2}). Then, there is a unique solution of the macroscopic homogenized problem 
	\begin{equation}\label{c5eq18}
	\left\{\begin{array}{l}
	\dfrac{\partial u_{0}}{\partial t} - \textup{div}\,q(Du_{0}) = f \quad \textup{in}\;Q \\
	
	\\
	u_{0}=0 \quad \textup{on}\; \partial\Omega\times (0,T) \\
	
	\\
	u_{0}(x,0) = 0 \quad \textup{in} \; \Omega,
	\end{array}\right.
	\end{equation}
	where $q$ is defined by (\ref{c5eq16}).
\end{thm}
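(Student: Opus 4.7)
The existence part is essentially assembled in the discussion preceding the statement. The global homogenized problem $(\ref{c5lem2})$ splits into the three systems $(\ref{c5eq11})$--$(\ref{c5eq13})$ by choosing test triples of the form $(0,0,v_2)$, $(0,v_1,0)$ and $(v_0,0,0)$. The inner cell problem $(\ref{c5eq14})$ together with the definition of $h$ eliminates $u_2$ from $(\ref{c5eq12})$; the outer cell problem $(\ref{c5eq15})$ then eliminates $u_1$ from $(\ref{c5eq13})$, leaving exactly $(\ref{c5eq17})$. But $(\ref{c5eq17})$ is the weak formulation of $\partial_t u_0 - \mathrm{div}\, q(Du_0) = f$ in $Q$: the Dirichlet condition is built into $W^1_0 L^B(\Omega;\mathbb{R})$, and the initial condition $u_0(\cdot,0)=0$ is encoded in $\mathcal{W}_0$. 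Thus the $u_0$-component of the solution to $(\ref{c5lem2})$ solves $(\ref{c5eq18})$.

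\textbf{Uniqueness via lifting to Remark \ref{rem1}.} The most economical route is to reduce uniqueness for $(\ref{c5eq18})$ to the already-established uniqueness for $(\ref{c5lem2})$. Given any $u_0\in\mathcal{W}_0(0,T;W^1_0 L^B(\Omega;\mathbb{R}))$ solving $(\ref{c5eq18})$, set
\begin{equation*}
u_1 := \pi_1(Du_0), \qquad u_2 := \pi_2(\cdot,\, Du_0 + D_y u_1),
\end{equation*}
interpreted pointwise in $(x,t)$ via the cell problems. By construction of $q$ and $h$, the triple $\mathbf{u}=(u_0,u_1,u_2)$ lies in $\mathbb{F}^1_0 L^B$ and satisfies each of $(\ref{c5eq11})$--$(\ref{c5eq13})$, hence solves $(\ref{c5lem2})$. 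Uniqueness in Remark \ref{rem1} then forces $u_0$ to be unique.

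\textbf{Alternative route and main obstacle.} A more hands-on proof first establishes strict monotonicity of $q$ directly. Fix $\xi,\xi'\in\mathbb{R}^d$, test $(\ref{c5eq15})$ at $\xi$ and at $\xi'$ against $\theta=\pi_1(\xi)-\pi_1(\xi')$ and subtract; iterating the same trick on the inner problem $(\ref{c5eq14})$ with $\theta=\pi_2(y,\eta)-\pi_2(y,\eta')$ where $\eta=\xi+D_y\pi_1(\xi)$, and invoking $(\ref{c1eq5})(ii)$ pointwise, gives
\begin{equation*}
(q(\xi)-q(\xi'))\cdot(\xi-\xi') \;\geq\; c_2 \iiint_{\Gamma\times Z} B\bigl(|\Xi(\xi)-\Xi(\xi')|\bigr)\, dy\,d\tau\,dz,
\end{equation*}
with $\Xi(\xi)=\xi+D_y\pi_1(\xi)+D_z\pi_2(y,\eta)$. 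One then repeats the computation of Remark \ref{rem1} on two putative solutions $u_0,w_0$ of $(\ref{c5eq18})$: subtract the weak forms, test with $u_0-w_0$, and use the identity $\int_0^T\langle(u_0-w_0)'(t),(u_0-w_0)(t)\rangle\,dt=\tfrac{1}{2}\|u_0(T)-w_0(T)\|_{L^2(\Omega)}^2$ to conclude $Du_0=Dw_0$ a.e., hence $u_0=w_0$. The main technical obstacle on this route is justifying admissibility of the differences $\pi_i(\xi)-\pi_i(\xi')$ as test functions in the relevant Orlicz--Sobolev spaces and the pointwise-in-$y$ manipulations on the inner cell, for which the $\Delta_2$ and $\Delta'$ assumptions $(\ref{c1eq1})$ on $B$ and $\widetilde{B}$ are indispensable; the lifting route above sidesteps these technicalities entirely.
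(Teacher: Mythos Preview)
Your proposal is correct. Your existence argument matches the paper's, which simply points to $(\ref{c5eq17})$ as the weak form of $(\ref{c5eq18})$.

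For uniqueness, your two routes are both sound, but the paper takes the second one: it assumes a second solution $w_0$, subtracts the weak formulations tested against $u_0-w_0$, then expands $q$ back through $h$ and $a$ via the cell correctors, setting
\[
A = Du_0 + D_y\pi_1(Du_0) + D_z\pi_2\bigl(\cdot, Du_0+D_y\pi_1(Du_0)\bigr),\qquad
B = Dw_0 + D_y\pi_1(Dw_0) + D_z\pi_2\bigl(\cdot, Dw_0+D_y\pi_1(Dw_0)\bigr),
\]
and uses the cell identities $(\ref{c5eq14})$--$(\ref{c5eq15})$ to replace $(Du_0-Dw_0)$ by $(A-B)$ in the monotonicity term, then invokes $(\ref{c1eq5})(ii)$ exactly as in Remark~\ref{rem1}. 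This is your ``alternative route,'' just unpacked in one pass rather than first isolating a monotonicity inequality for $q$.

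Your primary ``lifting'' route is genuinely different and cleaner: by reconstructing $(u_1,u_2)$ from any solution $u_0$ of $(\ref{c5eq18})$ via the cell maps $\pi_1,\pi_2$, you reduce uniqueness for $(\ref{c5eq18})$ to the already-proved uniqueness for the global problem $(\ref{c5lem2})$ in Remark~\ref{rem1}. This avoids re-running the monotonicity calculation and the cell-problem cancellations explicitly. The price is that one must check the lifted triple actually lands in $\mathbb{F}^1_0 L^B$, i.e.\ that $(x,t)\mapsto \pi_1(Du_0(x,t))$ and $(x,t,y)\mapsto \pi_2(y,Du_0+D_y\pi_1(Du_0))$ have the required measurability and Orlicz integrability; the paper implicitly accepts this in the discussion before the theorem (``$u_1=\pi_1(Du_0)$''), so at the paper's level of rigor your argument is complete. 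The paper's hands-on route, by contrast, makes the role of the strict monotonicity $(\ref{c1eq5})(ii)$ on $a$ more visible and yields, as a by-product, the effective monotonicity of $q$ that you state.
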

\begin{proof}
From (\ref{c5eq17}), the function $u_{0}$ is a solution of (\ref{c5eq18}). Let $w_{0} \in \mathcal{W}_{0}\left(0,T; W^{1}_{0}L^{B}(\Omega;\mathbb{R})\right)$ be another solution of (\ref{c5eq18}), then we have 
\begin{equation*}
\begin{array}{l}
-\int_{0}^{T} \left(u^{\prime}_{0}(t), w_{0}(t)\right) dt - \int_{Q} q(Du_{0})\cdot Dw_{0}(t)\, dxdt  = -\int_{0}^{T} \left(f(t), w_{0}(t)\right) dt, \\

\\
\int_{0}^{T} \left(u^{\prime}_{0}(t), u_{0}(t)\right) dt + \int_{Q} q(Du_{0})\cdot Du_{0}(t)\, dxdt  = \int_{0}^{T} \left(f(t), u_{0}(t)\right) dt, \\

\\
\int_{0}^{T} \left(w^{\prime}_{0}(t), w_{0}(t)\right) dt + \int_{Q} q(Dw_{0})\cdot Dw_{0}(t)\, dxdt  = \int_{0}^{T} \left(f(t), w_{0}(t)\right) dt, \\

\\
-\int_{0}^{T} \left(w^{\prime}_{0}(t), u_{0}(t)\right) dt - \int_{Q} q(Dw_{0})\cdot Du_{0}(t)\, dxdt  = -\int_{0}^{T} \left(f(t), u_{0}(t)\right) dt.
\end{array}
\end{equation*}	
Thus, we have that 
\begin{equation*}
 \int_{0}^{T} \left(u^{\prime}_{0}(t)-w^{\prime}_{0}(t), u_{0}(t)-w_{0}(t)\right) dt  + \int_{Q} \left(q(Du_{0}) -  q(Dw_{0})\right)\cdot (Du_{0} - Dw_{0})\, dxdt = 0.
\end{equation*}
Replacing $q$ by (\ref{c5eq16}), we get 
\begin{equation*}
\begin{array}{l}
\int_{0}^{T} \left(u^{\prime}_{0}(t)-w^{\prime}_{0}(t), u_{0}(t)-w_{0}(t)\right) dt  \\

\\
\quad + \int_{Q}\int_{\Gamma} \left( h(y,\tau, Du_{0} + D_{y}\pi_{1}(Du_{0})) - h(y,\tau, Dw_{0} + D_{y}\pi_{1}(Dw_{0})) \right)\cdot (Du_{0} - Dw_{0})\, dxdt \,dyd\tau = 0,
\end{array}
\end{equation*}
i.e., 
\begin{equation*}
\begin{array}{l}
\int_{0}^{T} \left(u^{\prime}_{0}(t)-w^{\prime}_{0}(t), u_{0}(t)-w_{0}(t)\right) dt  \\

\\
\quad + \iiint_{Q\times \Gamma\times Z} \bigg[a\left(y,z,\tau,u_{0},\substack{\underbrace{Du_{0}+ D_{y}\pi_{1}(Du_{0}) + D_{z}\pi_{2}(y,\tau, Du_{0}+D_{y}\pi_{1}(Du_{0}))} \\ =A} \right) \\

\\
\qquad \quad - a\left(y,z,\tau,w_{0},\substack{\underbrace{Dw_{0}+ D_{y}\pi_{1}(Dw_{0}) + D_{z}\pi_{2}(y,\tau, Dw_{0}+D_{y}\pi_{1}(Dw_{0}))} \\ =B} \right) \bigg] \\

\\
\qquad \quad \quad \cdot \left[(Du_{0}-Dw_{0}) \right] \, dxdt dyd\tau dz = 0.
\end{array}
\end{equation*}
Thus, 
\begin{equation*}
\begin{array}{l}
\int_{0}^{T} \left(u^{\prime}_{0}(t)-w^{\prime}_{0}(t), u_{0}(t)-w_{0}(t)\right) dt  \\

\\
\quad + \iiint_{Q\times \Gamma\times Z} \left(a(y,z,\tau, u_{0}, A) - a(y,z,\tau, w_{0}, B) \right)\cdot (A-B)\, dxdtdyd\tau dz = 0.
\end{array}
\end{equation*}
Since 
\begin{equation*}
\begin{array}{rcl}
0 & = &\int_{0}^{T} \left(u^{\prime}_{0}(t)-w^{\prime}_{0}(t), u_{0}(t)-w_{0}(t)\right) dt  \\

\\
&  &  \quad + \iiint_{Q\times \Gamma\times Z} a(y,z,\tau, u_{0}, A)\cdot (A-Du_{0})\, dxdtdyd\tau dz \\

\\
 & = & \int_{0}^{T} \left(u^{\prime}_{0}(t)-w^{\prime}_{0}(t), u_{0}(t)-w_{0}(t)\right) dt  \\
 
 \\
&  &  \quad + \iiint_{Q\times \Gamma\times Z} a(y,z,\tau, u_{0}, B)\cdot (B-Dw_{0})\, dxdtdyd\tau dz  
\end{array}
\end{equation*}
and 
\begin{equation*}
\begin{array}{rcl}
0 & = &\int_{0}^{T} \left(u^{\prime}_{0}(t)-w^{\prime}_{0}(t), u_{0}(t)-w_{0}(t)\right) dt  \\

\\
  &  & \quad + \iiint_{Q\times \Gamma\times Z} a(y,z,\tau, u_{0}, A)\cdot (B-Dw_{0})\, dxdtdyd\tau dz \\

\\
& = & \int_{0}^{T} \left(u^{\prime}_{0}(t)-w^{\prime}_{0}(t), u_{0}(t)-w_{0}(t)\right) dt  \\

\\
&  &  \quad + \iiint_{Q\times \Gamma\times Z} a(y,z,\tau, u_{0}, B)\cdot (A-Du_{0})\, dxdtdyd\tau dz.  
\end{array}
\end{equation*}
Thus, using similar arguments as in Remark \ref{rem1}, relying on (\ref{c1eq5})-(ii) give uniqueness.
\end{proof}
\begin{rem}
	Following this work, if we consider $B(t) = \frac{t^{p}}{p}$ ($p>1, \;t\geq 0$), then $B \in \Delta_{2}\cap\Delta'$ (with $\widetilde{B}(t)=\frac{t^{q}}{q}$, $q=\frac{p}{p-1}$) and one has $W^{1}L^{B}(\Omega,\mathbb{R}) \equiv W^{1,p}(\Omega,\mathbb{R})$, $L^{B}(0,T ; W^{1}L^{B}(\Omega,\mathbb{R})) \equiv L^{p}(0,T ; W^{1,p}(\Omega,\mathbb{R}))$ and $L^{\widetilde{B}}(0,T ; W^{-1}L^{\widetilde{B}}(\Omega,\mathbb{R})) \equiv L^{q}(0,T ; W^{-1,q}(\Omega,\mathbb{R}))$.  Therefore, reiterated homogenization problem (\ref{c1eq2}) can be rewritten in the classical Sobolev's spaces (see e.g. \cite{wou2}).
\end{rem}
 
\section{Appendix}
\subsection{Appendix A : Traces results}

This subsection is devoted to recall some results which are crucial for reiterated multiple scales convergence in the Orlicz setting. The notation is similar to \cite[section 2]{tacha3}.  
\par Traces of the form 
\begin{equation*}
v^{\varepsilon}(x) := v\left(x,\frac{x}{\varepsilon}, \frac{x}{\varepsilon^{2}}\right), \quad x\in U, \;\; \varepsilon >0,
\end{equation*}
when $v \in \mathcal{C}(U\times \mathbb{R}^{d}_{y}\times \mathbb{R}^{d}_{z})$ are well known and, clearly the operator of order $\varepsilon >0$, $(t^{\varepsilon})$, defined by 
\begin{equation*}
t^{\varepsilon} : v \in \mathcal{C}(U\times \mathbb{R}^{d}_{y}\times \mathbb{R}^{d}_{z}) \rightarrow v^{\varepsilon} \in \mathcal{C}(U), 
\end{equation*}
is linear and continuous. \\
Making use of the subscript $_{b}$ to denote bounded functions, the same definitions and properties hold true (since $\overline{U}$ is compact), when 
\begin{equation*}
v \in \mathcal{C}(\overline{U}; \mathcal{C}_{b}(\mathbb{R}^{d}_{y}\times \mathbb{R}^{d}_{z})) \subset \mathcal{C}(\overline{U}; \mathcal{C}(\mathbb{R}^{d}_{y}\times \mathbb{R}^{d}_{z})) \cong \mathcal{C}(\overline{U}\times\mathbb{R}^{d}_{y}\times \mathbb{R}^{d}_{z}).
\end{equation*}
Then, considering $\mathcal{C}(\overline{U}; \mathcal{C}_{b}(\mathbb{R}^{d}_{y}\times \mathbb{R}^{d}_{z}))$ as a subspace of $\mathcal{C}(\overline{U}\times\mathbb{R}^{d}_{y}\times \mathbb{R}^{d}_{z})$, $v^{\varepsilon}\in \mathcal{C}_{b}(U)$ and, with an abuse of notation the operator $t^{\varepsilon}$ can be interpreted from $\mathcal{C}(\overline{U}; \mathcal{C}_{b}(\mathbb{R}^{d}_{y}\times \mathbb{R}^{d}_{z}))$ to $\mathcal{C}_{b}(U)$ as linear and continuous. Moreover, it is easily seen that 
\begin{equation}\label{c2eq1}
\left|v^{\varepsilon}(x) \right| = \left|v(x, \frac{x}{\varepsilon}, \frac{x}{\varepsilon^{2}}) \right| \leq \|v(x)\|_{\infty},
\end{equation}
for every $x\in U$. By $v \in L^{\Phi}(U; \mathcal{C}_{b}(\mathbb{R}^{d}_{y}\times \mathbb{R}^{d}_{z}))$, we mean that the function $x\rightarrow \|v(x)\|_{\infty}$, from $U$ into $\mathbb{R}$, belongs to $L^{\Phi}(U)$ and 
\begin{equation*}
\|v\|_{L^{\Phi}(U; \mathcal{C}_{b}(\mathbb{R}^{d}_{y}\times \mathbb{R}^{d}_{z}))} = \inf\left\{k>0\, :\; \int_{U} \Phi\left(\dfrac{\|v(x)\|_{\infty}}{k} \right) dx \leq 1 \right\} < +\infty.
\end{equation*}
Recalling that $N$-functions are non decreasing, from (\ref{c2eq1}), we deduce that :
\begin{equation*}
\Phi\left(\dfrac{|v^{\varepsilon}(x)|}{k}\right) \leq \Phi\left(\dfrac{\|v^{\varepsilon}(x)\|_{\infty}}{k} \right), \;\; \textup{for\, all}\, k>0, \; \textup{for all}\, x\in \overline{U},
\end{equation*}
\begin{equation*}
\int_{U}\Phi\left(\dfrac{|v^{\varepsilon}(x)|}{k}\right)dx \leq \int_{U}\Phi\left(\dfrac{\|v^{\varepsilon}(x)\|_{\infty}}{k} \right)dx,
\end{equation*}
thus, 
\begin{equation*}
\int_{U}\Phi\left(\dfrac{\|v^{\varepsilon}(x)\|_{\infty}}{k} \right)dx \leq 1 \Longrightarrow \int_{U}\Phi\left(\dfrac{|v^{\varepsilon}(x)|}{k}\right)dx \leq 1,
\end{equation*}
hence, 
\begin{equation}\label{c2eq2}
\|v^{\varepsilon}\|_{L^{\Phi}(U)} \leq \|v\|_{L^{\Phi}(U; \mathcal{C}_{b}(\mathbb{R}^{d}_{y}\times \mathbb{R}^{d}_{z}))}.
\end{equation}
Thus, the trace operator $t^{\varepsilon} : v \rightarrow v^{\varepsilon}$ from $\mathcal{C}(\overline{U}; \mathcal{C}_{b}(\mathbb{R}^{d}_{y}\times \mathbb{R}^{d}_{z}))$ into $L^{\Phi}(U)$, extends by density and continuity to a unique operator from $L^{\Phi}(U; \mathcal{C}_{b}(\mathbb{R}^{d}_{y}\times \mathbb{R}^{d}_{z}))$, still denoted in the same way, which verifies (\ref{c2eq2}) for all $v \in L^{\Phi}(U; \mathcal{C}_{b}(\mathbb{R}^{d}_{y}\times \mathbb{R}^{d}_{z}))$. Referring to \cite[Section 2]{tacha3}, it can be ensured measurability for the trace of elements $v \in L^{\infty}(\mathbb{R}^{d}_{y}; \mathcal{C}_{b}(\mathbb{R}^{d}_{z}))$ and $v\in \mathcal{C}(\overline{U}; L^{\infty}(\mathbb{R}^{d}_{y}; \mathcal{C}(\mathbb{R}^{d}_{z})))$, which is of crucial importance to deal with reiterated two-scale convergence. 
\par By $M : \mathcal{C}_{per}(Y\times Z) \rightarrow \mathbb{R}$, we denote the mean value operator (or equivalently 'averaging operator') defined by 
\begin{equation}\label{c2eq3}
v \longrightarrow M(v) := \iint_{Y\times Z} v(y,z) dydz.
\end{equation} 
It is easily seen that $M$ is :
\begin{itemize}
	\item[(i)] nonnegative, i.e., $M(v) \geq 0$ for all $v \in \mathcal{C}_{per}(Y\times Z)$, $u \geq 0$;
	\item[(ii)] continuous on $\mathcal{C}_{per}(Y\times Z)$ (for the sup norm);
	\item[(iii)] such that $M(1) = 1$;
	\item[(iv)] translation invariant.
\end{itemize}
Following \cite{tacha4}, for the given $N$-function $\Phi$, by $\Xi^{\Phi}(\mathbb{R}^{d}_{y}; \mathcal{C}_{b}(\mathbb{R}^{d}_{z}))$, we denote the space 
\begin{equation}\label{c2eq4}
\begin{array}{l}
\Xi^{\Phi}(\mathbb{R}^{d}_{y}; \mathcal{C}_{b}(\mathbb{R}^{d}_{z})) := \bigg\{v \in L^{\Phi}_{loc}(\mathbb{R}^{d}_{x}; \mathcal{C}_{b}(\mathbb{R}^{d}_{z})) : \, \textup{for \, every} \, U\in \mathcal{A}(\mathbb{R}^{d}_{x}) : \\
\qquad\qquad\qquad \qquad \sup_{0<\varepsilon\leq 1} \inf \left\{k>0, \; \int_{U} \Phi\left(\dfrac{\|v(\frac{x}{\varepsilon}, \cdot)\|^{L^{\infty}}}{k}\right)dx \leq 1 \right\} < \infty \bigg\},
\end{array}
\end{equation}
which, endowed with the norm 
\begin{equation}
\begin{array}{l}
\|v\|_{\Xi^{\Phi}(\mathbb{R}^{d}_{y}; \mathcal{C}_{b}(\mathbb{R}^{d}_{z}))} := \\
\sup_{0<\varepsilon\leq 1} \inf \left\{k>0, \; \int_{B_{d}(\omega,1)} \Phi\left(\dfrac{\|v(\frac{x}{\varepsilon}, \cdot)\|_{L^{\infty}}}{k}\right)dx \leq 1 \right\},
\end{array}
\end{equation}
turns out to be a Banach space. $B_{d}(\omega,1)$, above being the unit ball of $\mathbb{R}^{d}_{x}$ centered at the origin. 
\par We denote by $\mathfrak{X}^{\Phi}_{per}(\mathbb{R}^{d}_{y}; \mathcal{C}_{b}(\mathbb{R}^{d}_{z}))$ the closure of $\mathcal{C}_{per}(Y\times Z)$ in $\Xi^{\Phi}(\mathbb{R}^{d}_{y}; \mathcal{C}_{b}(\mathbb{R}^{d}_{z}))$ and with the above notation, by $L^{\Phi}_{per}(Y\times Z)$ the space of functions in $L^{\Phi}_{loc}(\mathbb{R}^{d}_{y}\times \mathbb{R}^{d}_{z})$ which are $Y\times Z$-periodic, with the norm $\|\cdot\|_{\Phi, Y\times Z}$, (i.e., one considers the $L^{\Phi}$ norm just on the unit period). Furthermore,it is immediately seen that 
\begin{equation*}
\begin{array}{rcl}
\left|\int_{B_{d}(\omega,1)}v\left(\dfrac{x}{\varepsilon}, \dfrac{x}{\varepsilon^{2}}\right) dx \right|& \leq & \int_{B_{d}(\omega,1)} \left\|v\left(\dfrac{x}{\varepsilon}, \cdot\right)\right\|_{\infty} dx  \\
& \leq & 2 \|1\|_{\widetilde{\Phi}, B_{d}(\omega,1)} \|v\|_{\Xi^{\Phi}(\mathbb{R}^{d}_{y}; \mathcal{C}_{b}(\mathbb{R}^{d}_{z}))},
\end{array}
\end{equation*}
for every $v\in \mathcal{C}_{per}(Y\times Z)$ and for every $0< \varepsilon \leq 1$. 
\par The following results are useful to prove estimates which involve test functions on oscillating arguments (see for instance Proposition 2.7), can be found in \cite[Section 2]{tacha3}. 
\begin{lem}\label{c2eq5}
	There exists $C\in \mathbb{R}^{+}$ such that 
	\begin{equation*}
	\|v^{\varepsilon}\|_{\Phi, B_{d}(\omega,1)} \leq C \|v\|_{\Phi, Y\times Z}\;\, \textup{for\, every}\; 0< \varepsilon \leq 1 \; \textup{and} \; v \in \mathfrak{X}^{\Phi}_{per}(\mathbb{R}^{d}_{y}; \mathcal{C}_{b}(\mathbb{R}^{d}_{z})).
	\end{equation*}
\end{lem}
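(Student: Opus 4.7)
The plan is to reduce the estimate, by density, to the case $v\in\mathcal{C}_{per}(Y\times Z)$, and then to combine a simple pointwise majoration with the classical single-scale tiling estimate. Recall that $\mathfrak{X}^{\Phi}_{per}(\mathbb{R}^d_y;\mathcal{C}_b(\mathbb{R}^d_z))$ is defined as the closure of $\mathcal{C}_{per}(Y\times Z)$ in the $\Xi^{\Phi}$-norm, and for continuous periodic $v$ the trace $v^{\varepsilon}$ belongs to $\mathcal{C}_b(B_d(\omega,1))\hookrightarrow L^{\Phi}(B_d(\omega,1))$. Hence, establishing the bound on the dense subspace $\mathcal{C}_{per}(Y\times Z)$ will suffice, the extension to $\mathfrak{X}^{\Phi}_{per}$ following by completeness of the Luxemburg norm and by the continuity of the trace map $v\mapsto v^{\varepsilon}$.

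For $v\in\mathcal{C}_{per}(Y\times Z)$ fixed, my key observation is the pointwise majoration
\begin{equation*}
|v^{\varepsilon}(x)|\;=\;\left|v\!\left(\tfrac{x}{\varepsilon},\tfrac{x}{\varepsilon^{2}}\right)\right|\;\leq\;\|v(x/\varepsilon,\cdot)\|_{L^{\infty}(\mathbb{R}^{d}_{z})}\;=\;W^{\varepsilon}(x),
\end{equation*}
where $W(y):=\|v(y,\cdot)\|_{L^{\infty}(\mathbb{R}^{d}_{z})}$ is continuous and $Y$-periodic. Monotonicity of the Luxemburg norm then gives $\|v^{\varepsilon}\|_{\Phi,B_d(\omega,1)}\leq\|W^{\varepsilon}\|_{\Phi,B_d(\omega,1)}$, so the problem reduces to the single-scale tiling inequality
\begin{equation*}
\|W^{\varepsilon}\|_{\Phi,B_d(\omega,1)}\;\leq\;C_{1}\|W\|_{\Phi,Y}.
\end{equation*}
This is obtained by the standard argument: cover $B_d(\omega,1)$ with cells $\varepsilon(n+Y)$ for $n\in\mathcal{N}_{\varepsilon}$, use $Y$-periodicity of $W$ to write $\int_{\varepsilon(n+Y)}\Phi(|W^{\varepsilon}|/\lambda)dx=\varepsilon^{d}\int_{Y}\Phi(|W|/\lambda)dy$ on each cell, sum over $n$ using the uniform bound $|\mathcal{N}_{\varepsilon}|\varepsilon^{d}\leq C_{1}$ for $\varepsilon\in(0,1]$, and translate the resulting integral inequality into the Luxemburg form via the convexity relation $\Phi(t/A)\leq\Phi(t)/A$ valid for $A\geq 1$.

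Combining the two steps yields $\|v^{\varepsilon}\|_{\Phi,B_d(\omega,1)}\leq C_{1}\|W\|_{\Phi,Y}$, and the conclusion will follow once the right-hand side is identified with $C\,\|v\|_{\Phi,Y\times Z}$ in the sense of $\mathfrak{X}^{\Phi}_{per}$. I expect this last identification to be the main obstacle: the pointwise majoration by $\|v(y,\cdot)\|_{L^{\infty}(Z)}$ is genuinely lossy, so the norm on the right-hand side of the lemma must already incorporate this $z$-supremum, which is precisely the mixed-norm structure built into the definition of $\mathfrak{X}^{\Phi}_{per}(\mathbb{R}^d_y;\mathcal{C}_b(\mathbb{R}^d_z))$. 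Once this verification is carried out on $\mathcal{C}_{per}(Y\times Z)$, the density argument together with the continuity of $v\mapsto v^{\varepsilon}$ from $\Xi^{\Phi}$ into $L^{\Phi}(B_d(\omega,1))$ will complete the proof.
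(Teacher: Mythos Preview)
The paper itself does not prove this lemma: it is stated in Appendix~A with the indication that the result ``can be found in \cite[Section~2]{tacha3}'', so there is no in-paper argument to compare against directly. Nevertheless, your proposal contains a genuine gap that you flag but do not close, and in fact cannot close along the route you take.

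Your argument goes through the majorant $W(y)=\|v(y,\cdot)\|_{L^{\infty}(Z)}$ and the single-scale tiling bound $\|W^{\varepsilon}\|_{\Phi,B_d(\omega,1)}\le C_1\|W\|_{\Phi,Y}$. That step is correct, but the right-hand side is the mixed norm $\|v\|_{L^{\Phi}(Y;L^{\infty}(Z))}$, which is \emph{not} dominated by $\|v\|_{\Phi,Y\times Z}$. Your hope that the symbol $\|\cdot\|_{\Phi,Y\times Z}$ ``already incorporates the $z$-supremum'' is mistaken: in the paper's conventions (see the paragraph preceding Lemma~\ref{c2eq7}) this denotes the ordinary Luxemburg norm on $L^{\Phi}(Y\times Z)$. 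A concrete obstruction: take $v(y,z)=g(z)$ with $g\in\mathcal{C}_{per}(Z)$; then $W\equiv\|g\|_{\infty}$ so $\|W\|_{\Phi,Y}\sim\|g\|_{\infty}$, while $\|v\|_{\Phi,Y\times Z}=\|g\|_{\Phi,Z}$, and no universal constant bounds the former by the latter. Thus what you actually prove is only $\|v^{\varepsilon}\|_{\Phi,B_d(\omega,1)}\le C\|v\|_{\Xi^{\Phi}}$, which is essentially the definition of the $\Xi^{\Phi}$-norm, not the stated lemma.

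The proof in \cite{tacha3} avoids the lossy $z$-supremum by tiling at the \emph{fine} scale $\varepsilon^{2}$ and exploiting the scale separation $\varepsilon^{2}\ll\varepsilon$. On each cell $\varepsilon^{2}(m+Z)$ the change of variables $z=x/\varepsilon^{2}-m$ together with $Z$-periodicity gives
\[
\int_{\varepsilon^{2}(m+Z)}\Phi\!\left(\frac{|v(x/\varepsilon,x/\varepsilon^{2})|}{\lambda}\right)dx
=\varepsilon^{2d}\int_{Z}\Phi\!\left(\frac{|v(\varepsilon(m+z),z)|}{\lambda}\right)dz.
\]
As $m$ ranges over the cells contained in a coarse block $\varepsilon(n+Y)$, the slow arguments $\varepsilon m$ form an $\varepsilon$-mesh over $n+Y$; uniform continuity of $v$ in the first variable absorbs the shift $\varepsilon z$, and $Y$-periodicity plus a Riemann-sum comparison turn the double sum over $m$ and $n$ into a quantity bounded by $C\iint_{Y\times Z}\Phi(|v|/\lambda)\,dy\,dz$. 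That is the missing mechanism, and it is precisely where the reiterated structure $(x/\varepsilon,x/\varepsilon^{2})$ is used rather than collapsed.
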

\begin{lem}\label{c2eq6}
	The operator $M$ defined on $\mathcal{C}_{per}(Y\times Z)$ by (\ref{c2eq3}) can be extended (with the same notation) by continuity to a unique linear and continuous operator from $\mathfrak{X}^{\Phi}_{per}(\mathbb{R}^{d}_{y}; \mathcal{C}_{b}(\mathbb{R}^{d}_{z}))$ to $\mathbb{R}$ in such a way that it results non negative and translation invariant.
\end{lem}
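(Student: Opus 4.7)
The strategy is the standard continuous-extension-by-density argument, exploiting that $\mathfrak{X}^{\Phi}_{per}(\mathbb{R}^{d}_{y};\mathcal{C}_{b}(\mathbb{R}^{d}_{z}))$ is, by definition, the closure of $\mathcal{C}_{per}(Y\times Z)$ in $\Xi^{\Phi}(\mathbb{R}^{d}_{y};\mathcal{C}_{b}(\mathbb{R}^{d}_{z}))$. The first task is to prove that $v\mapsto M(v)$ is continuous on $\mathcal{C}_{per}(Y\times Z)$ for the restriction of the $\Xi^{\Phi}$-norm; the bounded-linear-transformation principle then provides the unique continuous linear extension to the whole closure.

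For the continuity estimate I would combine the H\"older-type inequality already recorded in the excerpt,
\[
\left|\int_{B_{d}(\omega,1)}v\!\left(\frac{x}{\varepsilon},\frac{x}{\varepsilon^{2}}\right)dx\right|\le 2\,\|1\|_{\widetilde{\Phi},B_{d}(\omega,1)}\,\|v\|_{\Xi^{\Phi}(\mathbb{R}^{d}_{y};\mathcal{C}_{b}(\mathbb{R}^{d}_{z}))}\qquad(0<\varepsilon\le 1),
\]
with the classical two-scale mean-value (Riemann--Lebesgue) property for continuous doubly periodic functions, namely $\lim_{\varepsilon\to 0}\int_{B_{d}(\omega,1)}v(x/\varepsilon,x/\varepsilon^{2})\,dx=|B_{d}(\omega,1)|\,M(v)$. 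Passing to the limit yields $|M(v)|\le(2\|1\|_{\widetilde{\Phi},B_{d}(\omega,1)}/|B_{d}(\omega,1)|)\,\|v\|_{\Xi^{\Phi}}$ for every $v\in\mathcal{C}_{per}(Y\times Z)$. Nonnegativity and translation invariance are then transferred to the extension by density: given $v\in\mathfrak{X}^{\Phi}_{per}$ with $v\ge 0$, I would pick $v_{n}\in\mathcal{C}_{per}(Y\times Z)$ with $v_{n}\to v$ and replace it by $|v_{n}|$, which still converges to $v$ thanks to the pointwise bound $\bigl|\,|v_{n}|-v\,\bigr|\le|v_{n}-v|$ propagated through the $\mathcal{C}_{b}$-sup and the Luxemburg integral; each $M(|v_{n}|)\ge 0$ by direct inspection of (\ref{c2eq3}) then gives $M(v)\ge 0$. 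Translation invariance follows analogously after noting that translations act isometrically on $\Xi^{\Phi}$ via the change of variables in the Luxemburg integral.

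The most delicate ingredient I anticipate is the limit identification $\lim_{\varepsilon\to 0}\int_{B_{d}(\omega,1)}v(x/\varepsilon,x/\varepsilon^{2})\,dx=|B_{d}(\omega,1)|\,M(v)$, since the two scales $\varepsilon$ and $\varepsilon^{2}$ are coupled through the same spatial variable. The cleanest route I foresee is to first verify the formula on finite linear combinations of pure tensor products of trigonometric characters in $y$ and $z$, where Fubini together with the Riemann--Lebesgue lemma applied successively at each scale give the result by inspection, and then to extend to arbitrary $v\in\mathcal{C}_{per}(Y\times Z)$ by uniform approximation, the uniform-in-$\varepsilon$ bound of the previous paragraph ensuring that the limit survives the passage.
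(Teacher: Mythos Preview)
Your proposal is correct and follows precisely the route the paper is signposting: the displayed estimate just above Lemma~\ref{c2eq5} is placed there exactly so that, after the reiterated Riemann--Lebesgue limit you describe, one obtains $|M(v)|\le C\|v\|_{\Xi^{\Phi}}$ on $\mathcal{C}_{per}(Y\times Z)$ and hence a unique continuous linear extension to its closure $\mathfrak{X}^{\Phi}_{per}$. The paper itself does not spell out the argument but refers to \cite[Section~2]{tacha3}, and your outline matches that standard proof.

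One small point worth tightening: your claim that translations act \emph{isometrically} on $\Xi^{\Phi}$ is not quite immediate for shifts in the $y$-variable, because the norm is tied to the fixed ball $B_{d}(\omega,1)$ and a change of variables moves that ball. The clean way around this is to invoke the norm equivalence recorded in Lemmas~\ref{c2eq5} and~\ref{c2eq7}, namely $\|\cdot\|_{\Xi^{\Phi}}\sim\|\cdot\|_{\Phi,Y\times Z}$ on $\mathfrak{X}^{\Phi}_{per}$; the $L^{\Phi}(Y\times Z)$ norm is manifestly translation invariant by periodicity, so translations are at least bounded isomorphisms of $\mathfrak{X}^{\Phi}_{per}$, which is all you need to pass $M(T_{a}v_{n})=M(v_{n})$ to the limit. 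With that adjustment your density argument for both nonnegativity and translation invariance goes through without change.
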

Finally, we recall that $\mathfrak{X}^{\Phi}_{per}(\mathbb{R}^{d}_{y}; \mathcal{C}_{b}(\mathbb{R}^{d}_{z}))$ can be endowed with another norm, considering the set  $\mathfrak{X}^{\Phi}_{per}(\mathbb{R}^{d}_{y}\times\mathbb{R}^{d}_{z})$ as the closure of $\mathcal{C}_{per}(Y\times Z)$ in $L^{\Phi}_{loc}(\mathbb{R}^{d}_{y}\times\mathbb{R}^{d}_{z})$ with the norm 
\begin{equation*}
\|v\|_{\Xi^{\Phi}} := \sup_{0<\varepsilon\leq 1} \left\|v\left(\frac{x}{\varepsilon},\frac{y}{\varepsilon}\right) \right\|_{\Phi, (B_{d}(\omega,1))^{2}}.
\end{equation*}
Via Riemann-Lebesgue Lemma it can be proven that the above norm is equivalent to $\|v\|_{L^{\Phi}(Y\times Z)}$ thus, in the sequel, we will consider this one. For completeness, we state the following result, whose proof is in the Appendix of \cite{tacha3}. It states that the latter norm is controlled by the one defined in (\ref{c2eq4}), thus together with Lemma \ref{c2eq5}, it provides the equivalence among the introduced norms in $\mathfrak{X}^{\Phi}_{per}(\mathbb{R}^{d}_{y}; \mathcal{C}_{b}(\mathbb{R}^{d}_{z}))$. 
\begin{lem}\label{c2eq7}
	We have 
	\begin{equation*}
	\mathfrak{X}^{\Phi}_{per}(\mathbb{R}^{d}_{y}; \mathcal{C}_{b}(\mathbb{R}^{d}_{z})) \subset L^{\Phi}_{per}(Y\times Z) = \mathfrak{X}^{\Phi}_{per}(\mathbb{R}^{d}_{y}\times\mathbb{R}^{d}_{z}),
	\end{equation*}
	\begin{equation*}
	\|v\|_{\Phi,Y\times Z} \leq C\,\|v\|_{\Xi^{\Phi}_{per}(\mathbb{R}^{d}_{y}; \mathcal{C}_{b}(\mathbb{R}^{d}_{z}))}, \quad \textup{for\, all} \, v \in \mathfrak{X}^{\Phi}_{per}(\mathbb{R}^{d}_{y}; \mathcal{C}_{b}(\mathbb{R}^{d}_{z})).
	\end{equation*}
\end{lem}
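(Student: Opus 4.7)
My plan is to reduce everything to a density argument from $\mathcal{C}_{per}(Y\times Z)$, where the norm inequality becomes an easy consequence of the Riemann--Lebesgue lemma applied to the continuous periodic function $y\mapsto\|v(y,\cdot)\|_{L^{\infty}}$. First I will establish the inequality $\|v\|_{\Phi,Y\times Z}\leq C\,\|v\|_{\Xi^{\Phi}_{per}}$ on the dense subspace $\mathcal{C}_{per}(Y\times Z)$, then extend it by continuity, and finally deduce both the inclusion and the identification of the two spaces.

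Fixing $v\in\mathcal{C}_{per}(Y\times Z)$ and an arbitrary $k>K:=\|v\|_{\Xi^{\Phi}_{per}(\mathbb{R}^d_y;\mathcal{C}_b(\mathbb{R}^d_z))}$, the very definition of $K$ delivers
\begin{equation*}
\int_{B_d(\omega,1)}\Phi\!\left(\dfrac{\|v(x/\varepsilon,\cdot)\|_{L^\infty}}{k}\right)dx \leq 1 \qquad \textup{for every } 0<\varepsilon\leq 1.
\end{equation*}
Since $y\mapsto \Phi(\|v(y,\cdot)\|_{L^\infty}/k)$ is continuous and $Y$-periodic, the classical Riemann--Lebesgue lemma for oscillating periodic functions gives the limit $|B_d(\omega,1)|\int_Y \Phi(\|v(y,\cdot)\|_{L^\infty}/k)\,dy$ for the left-hand side as $\varepsilon\to 0$. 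Passing to the limit and absorbing $|B_d(\omega,1)|$ into the constant by convexity of $\Phi$ (using $\Phi(\alpha t)\leq\alpha\Phi(t)$ for $\alpha\in[0,1]$) yields a dimension-dependent constant $C_d>0$ such that $\int_Y\Phi(\|v(y,\cdot)\|_{L^\infty}/(C_d k))\,dy\leq 1$. Because $|v(y,z)|\leq\|v(y,\cdot)\|_{L^\infty}$ and $|Z|=1$, Fubini and monotonicity of $\Phi$ immediately give $\|v\|_{\Phi,Y\times Z}\leq C_d k$; letting $k\searrow K$ concludes the estimate on the dense subspace.

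The extension to all of $\mathfrak{X}^{\Phi}_{per}(\mathbb{R}^d_y;\mathcal{C}_b(\mathbb{R}^d_z))$ is then automatic: any Cauchy sequence $(v_n)\subset \mathcal{C}_{per}(Y\times Z)$ in the $\Xi^{\Phi}_{per}$ norm is, by the inequality just obtained, also Cauchy for $\|\cdot\|_{\Phi,Y\times Z}$, and the two limits agree via the continuous embedding of both spaces into $L^1_{loc}$. This produces the inclusion $\mathfrak{X}^{\Phi}_{per}(\mathbb{R}^d_y;\mathcal{C}_b(\mathbb{R}^d_z))\subset L^{\Phi}_{per}(Y\times Z)$ together with the announced bound. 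The identity $L^{\Phi}_{per}(Y\times Z)=\mathfrak{X}^{\Phi}_{per}(\mathbb{R}^d_y\times\mathbb{R}^d_z)$ then reduces to the density of $\mathcal{C}_{per}(Y\times Z)$ in $L^{\Phi}_{per}(Y\times Z)$ (a standard consequence of $\Phi\in\Delta_2$ via mollification and periodization) together with an analogous two-scale Riemann--Lebesgue computation on $(B_d(\omega,1))^2$ showing that $\|\cdot\|_{\Xi^{\Phi}}$ is equivalent to $\|\cdot\|_{\Phi,Y\times Z}$ on $\mathcal{C}_{per}(Y\times Z)$.

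The step I expect to be most delicate is the passage from the uniform modular bound (valid for every $\varepsilon\in(0,1]$) to a bound on the periodic modular, because the Luxemburg infimum does not commute with limits in a transparent way. The safest workaround, sketched above, is to work at the level of modulars for each admissible $k>K$, extract an integral inequality, and only then return to a Luxemburg-norm inequality; the convexity correction by $C_d$ must be tracked with care to avoid a circular estimate. Once this technicality is in hand, the remainder of the argument is pure density and routine Orlicz-space bookkeeping.
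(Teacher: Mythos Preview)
Your argument is correct and follows the route the paper itself signals: the paper does not give a self-contained proof of this lemma but defers to the appendix of \cite{tacha3}, noting just beforehand that the equivalence is obtained ``via Riemann--Lebesgue Lemma''. Your density-plus-Riemann--Lebesgue strategy (pass to the modular for $k>K$, let $\varepsilon\to 0$ on the oscillating integral of the continuous periodic function $y\mapsto\Phi(\|v(y,\cdot)\|_{\infty}/k)$, then use $|v(y,z)|\leq\|v(y,\cdot)\|_{\infty}$ and $|Z|=1$, and finally extend by density) is exactly this approach, and the convexity trick $\Phi(\alpha t)\leq\alpha\Phi(t)$ to absorb $|B_d(\omega,1)|$ is the right way to handle the dimensional constant.
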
 

\subsection{Appendix B : An abstract existence and uniqueness result}
This subsection is devoted to recall some results for existence and uniqueness of solutions of differential equations in Banach spaces. \\
Let $V$ be a real reflexive Banach space, and $H$ be a real Hilbert space such that 
\begin{equation}
V \subset H \;\, \textup{with\, continuous \, embedding}, \; V\, \textup{dense\, in}\, H, \;\; V\, \textup{is \, separable}.
\end{equation}
Let $(\cdot, \cdot)$ and $|\cdot|$ be the scalar product and the associated norm in $H$. Identifying $H$ to its dual yields $V \subset H \subset V'$, and it is convenient to denote also by $(\cdot, \cdot)$ the duality pairing between $V$ and $V'$, norms in $V$ and in $V'$ will be denoted $\|\cdot\|$. 
\par Let $0< T<\infty$ and let $\Phi\in \Delta_{2}$ which dominates the function $t\rightarrow t^{2}$. For each $t\in [0,T]$, let $A(t) : V\rightarrow V'$ be an operator satisfying the assumption (C) below : \\
(C) if $u\in V$, $v\in L^{\Phi}(0,T ; V)$, $(v_{n}) \subset L^{\Phi}(0,T ; V)$ and $\omega$ is the origin in $V$, then  
\begin{itemize}
	\item[(i)] the mapping $t\rightarrow A(t)u$ sends continuously $[0,T]$ into $V'$;
	\item[(ii)] the function $t\rightarrow A(t)u(t)$ belongs to $L^{\widetilde{\Phi}}(0,T ; V')$;
	\item[(iii)] $(A(t)\omega, w) = 0$ for all $t\in [0,T]$ and all $w\in V$; 
	\item[(iv)] there exists a constant $c>0$ such that for all $u,w \in V$, $(A(t)u - A(t)w, u-w) \geq c \Phi(\|u-w\|)$, $t\in[0,T]$;
	\item[(v)] if $v_{n}\rightarrow v$ in $L^{\Phi}(0,T ; V)$-weak, then for all $\chi \in L^{\Phi}(0,T ; V)$ one has $\lim_{n \to \infty} \int_{0}^{T} (A(t)v_{n}(t), \chi(t)) dt = \int_{0}^{T} (A(t)v(t), \chi(t)) dt$.
\end{itemize}
The following result can be found in \cite{kenne1}.
\begin{thm}\cite{kenne1}\label{apenb1}
	Assume that (C) holds. Let $f\in L^{\widetilde{\Phi}}(0,T ; V')$ and $u_{0} \in H$. There exists one and only one function $u\in \mathcal{C}([0,T]; H) \cap L^{\Phi}(0,T ; V)$ satisfying 
	\begin{equation}\label{apenb2}
	\left\{\begin{array}{l}
	u'(t) + A(t)u(t) = f(t), \quad 0< t\leq T, \\
	u(0) = u_{0}.
	\end{array}\right.
	\end{equation}
\end{thm}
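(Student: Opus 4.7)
The plan is to establish existence via a Galerkin approximation and uniqueness via the monotonicity assumption (C)(iv). Since $V$ is separable, pick a free total sequence $(w_j)_{j\geq 1}$ in $V$, set $V_m = \mathrm{span}(w_1,\dots,w_m)$ and pick $u_{0m} \in V_m$ with $u_{0m} \to u_0$ in $H$. Seek an approximate solution $u_m(t) = \sum_{j=1}^m c_{jm}(t)\, w_j$ of the finite-dimensional system $(u_m'(t), w_j) + (A(t) u_m(t), w_j) = (f(t), w_j)$ for $j=1,\dots,m$ with $u_m(0) = u_{0m}$. Assumptions (C)(i)--(ii) supply the Carath\'eodory regularity needed for local existence of this ODE, and the a priori bound below will extend each $u_m$ to $[0,T]$.

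For the a priori estimate, test the Galerkin equation against $u_m(t)$. Using (C)(iii) and (C)(iv), one has $(A(t) u_m, u_m) = (A(t) u_m - A(t)\omega, u_m - \omega) \geq c\,\Phi(\|u_m\|)$. For the right-hand side, Young's inequality for the conjugate pair $(\Phi, \widetilde{\Phi})$, together with the convexity bound $\Phi(\delta t) \leq \delta\,\Phi(t)$ for $\delta \in (0,1]$, yields $|(f, u_m)| \leq \tfrac{c}{2}\Phi(\|u_m\|) + \widetilde{\Phi}(\gamma\|f\|_{V'})$ for a suitable $\gamma>0$. Integration in time gives
$$\tfrac{1}{2}|u_m(t)|^2 + \tfrac{c}{2}\int_0^t \Phi(\|u_m(s)\|)\,ds \leq \tfrac{1}{2}|u_{0m}|^2 + \int_0^T \widetilde{\Phi}(\gamma\,\|f(s)\|_{V'})\,ds,$$
and Lemma \ref{c2lem2} converts this modular control into bounds of $(u_m)$ in $L^\infty(0,T;H) \cap L^\Phi(0,T;V)$. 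Combining with (C)(ii) and (C)(iv) shows that $(A(\cdot)u_m)$ is bounded in $L^{\widetilde{\Phi}}(0,T;V')$, hence, from the equation itself, so is $(u_m')$.

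By reflexivity of $L^\Phi(0,T;V)$ and weak-$*$ compactness in $L^\infty(0,T;H)$, extract a subsequence (not relabeled) with $u_m \rightharpoonup u$ in $L^\Phi(0,T;V)$, $u_m \rightharpoonup^{*} u$ in $L^\infty(0,T;H)$, and $u_m' \rightharpoonup u'$ in $L^{\widetilde{\Phi}}(0,T;V')$. The key step is then the passage to the limit in the nonlinear term: assumption (C)(v) says directly that $\int_0^T (A(t)u_m, \chi)\,dt \to \int_0^T (A(t)u, \chi)\,dt$ for every $\chi \in L^\Phi(0,T;V)$, which bypasses the usual Minty/monotonicity trick. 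Testing the Galerkin equation against $w_j\,\psi(t)$ with $\psi \in \mathcal{D}(0,T)$ and letting $m \to \infty$ using density of $\bigcup_m V_m$ in $V$ yields $u' + A(\cdot)u = f$ in $L^{\widetilde{\Phi}}(0,T;V')$. Since $u \in L^\Phi(0,T;V)$ with $u' \in L^{\widetilde{\Phi}}(0,T;V')$, the continuous embedding of this Banach space into $\mathcal{C}([0,T];H)$ makes the trace $u(0)$ meaningful; passing to the limit in $u_m(0) = u_{0m}$ against a test function of the form $\varphi(t) w_j$ with $\varphi(0)=1$, $\varphi(T)=0$ and integration by parts identifies $u(0) = u_0$.

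For uniqueness, take two solutions $u_1, u_2$ and set $w = u_1 - u_2$, which lies in the space $\{v \in L^\Phi(0,T;V) : v' \in L^{\widetilde{\Phi}}(0,T;V')\}$ with $w(0)=0$. Testing $w' + A(t)u_1 - A(t)u_2 = 0$ against $w$ and using the chain rule $\tfrac{d}{dt}|w|^2 = 2(w', w)$ valid in this space together with (C)(iv) gives $\tfrac{1}{2}\tfrac{d}{dt}|w(t)|^2 + c\,\Phi(\|w(t)\|) \leq 0$; integration from $0$ to $t$ forces $w \equiv 0$. The two main obstacles are (a) controlling $(f, u_m)$ by the coercivity term in the Orlicz setting, which is handled via Young's inequality plus the convexity trick to absorb a fraction of $\Phi(\|u_m\|)$ into the left-hand side, and (b) passing to the limit in the nonlinear term $A(t)u_m$ in the absence of strong convergence of $u_m$ in $L^\Phi(0,T;V)$, which is precisely what hypothesis (C)(v) is designed to circumvent.
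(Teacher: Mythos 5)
The paper gives no proof of Theorem \ref{apenb1}: it is imported verbatim from \cite{kenne1}, so your argument can only be compared with the standard route, which is indeed the Faedo--Galerkin scheme you follow (coercivity from (C)(iii)--(iv), Young's inequality plus the convexity trick to absorb $(f,u_m)$ with the same constants $\gamma$ as in (\ref{apenb3}), limit passage via (C)(v) in place of Minty's trick, uniqueness by monotonicity and the embedding of $\{v\in L^{\Phi}(0,T;V):v'\in L^{\widetilde{\Phi}}(0,T;V')\}$ into $\mathcal{C}([0,T];H)$, available since $\Phi$ dominates $t\mapsto t^{2}$). There is, however, a genuine gap at the very first step: solvability of the finite-dimensional system. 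You attribute the needed ``Carath\'eodory regularity'' to (C)(i)--(ii), but those hypotheses only give continuity, respectively integrability, in $t$ for a \emph{fixed} argument; local existence for the ODE system requires continuity of $c\mapsto (A(t)\sum_{j}c_{j}w_{j},w_{i})$ for a.e.\ $t$ (hemicontinuity/demicontinuity of $A(t)$ in its argument), and this appears nowhere in (C). Hypothesis (C)(v) gives weak continuity only in an integrated-in-time sense and does not, without further argument, yield continuity of $u\mapsto A(t)u$ for fixed $t$. In the intended application this continuity is supplied by the H\"older-type estimates (\ref{c3eq1}) and (\ref{c2eq10}), but for the abstract theorem you must either derive it from (C) or flag it as an additional (implicit) assumption; as written the Galerkin step is not justified.

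A second, smaller flaw: you claim that $(A(\cdot)u_{m})_{m}$ is bounded in $L^{\widetilde{\Phi}}(0,T;V')$ ``by (C)(ii) and (C)(iv)'' and that the equation then bounds $(u_{m}')_{m}$. Neither follows: (C)(ii) is purely qualitative and (C)(iv) is a lower (monotonicity) bound, so (C) contains no upper growth estimate for $A$; moreover the Galerkin identity controls $u_{m}'$ only in duality with $V_{m}$, so even a bound on $A(\cdot)u_{m}$ would not bound $u_{m}'$ in $V'$ for a general basis of a Banach space $V$. Fortunately these bounds are superfluous in your own scheme: the limit equation is obtained by testing with $\psi(t)w_{j}$, $\psi\in\mathcal{D}(0,T)$, using only the weak convergences of $u_{m}$ and hypothesis (C)(v), and once $u'+A(\cdot)u=f$ holds, $u'\in L^{\widetilde{\Phi}}(0,T;V')$ follows a posteriori from (C)(ii). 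So delete the claimed bounds and the extraction of $u_{m}'\rightharpoonup u'$; with that correction, the a priori estimate (which matches (\ref{apenb3}) via Lemma \ref{c2lem2}), the identification of $u(0)=u_{0}$, and the uniqueness argument are sound, and only the finite-dimensional solvability issue above needs repair.
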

Note that the first equation in (\ref{apenb2}) implies the variational formulation 
\begin{equation*}
\left(u^{\prime}(t), v\right) + \left(A(t)u(t), v\right) = \left(f(t), v\right), \quad 0< t \leq T,
\end{equation*}
for all $v\in V$.
\begin{prop}\cite{kenne1}
Assume that (C) holds. Let $f\in L^{\widetilde{\Phi}}(0,T ; V')$ and $u_{0} \in H$. Let $u\in \mathcal{C}([0,T]; H) \cap L^{\Phi}(0,T ; V)$ be the solution of (\ref{apenb2}). Then we have the following a priori estimate 
\begin{equation}\label{apenb3}
|u(t)|^{2} + c \int_{0}^{T} \Phi(\|u(t)\|) dt \leq \int_{0}^{T} \widetilde{\Phi}(\gamma\|f(t)\|) dt + |u_{0}|^{2}, \quad 0< t \leq T.
\end{equation}
\end{prop}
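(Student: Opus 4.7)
The plan is a classical energy estimate: test the evolution equation with the solution itself, use the chain rule for $|u(s)|^{2}$, and close the estimate via coercivity combined with a scaled Young inequality for $N$-functions. First I would pair equation (\ref{apenb2}) in the $V'$-$V$ duality with $u(s)$, obtaining for a.e.\ $s \in (0,T)$
\begin{equation*}
(u'(s), u(s)) + (A(s)u(s), u(s)) = (f(s), u(s)).
\end{equation*}
Under the regularity $u \in L^{\Phi}(0,T; V) \cap \mathcal{C}([0,T]; H)$ with $u' \in L^{\widetilde{\Phi}}(0,T; V')$, the standard integration-by-parts formula gives $(u'(s), u(s)) = \frac{1}{2}\frac{d}{ds}|u(s)|^{2}$; integrating from $0$ to $t$ and using the initial condition $u(0)=u_{0}$ produces the energy identity
\begin{equation*}
\frac{1}{2}|u(t)|^{2} + \int_{0}^{t}(A(s)u(s), u(s))\,ds = \frac{1}{2}|u_{0}|^{2} + \int_{0}^{t}(f(s), u(s))\,ds.
\end{equation*}

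Next I would bound the two remaining terms. For the principal term, (C)(iii) gives $A(s)\omega = 0$, so applying (C)(iv) with $w = \omega$ yields the coercive lower bound $(A(s)u(s), u(s)) \geq c\,\Phi(\|u(s)\|)$. For the forcing term, after the duality estimate $(f(s), u(s)) \leq \|f(s)\|\,\|u(s)\|$, I would invoke the scaled Young inequality for $N$-functions:
\begin{equation*}
\|f(s)\|\,\|u(s)\| = (\gamma\|f(s)\|)(\|u(s)\|/\gamma) \leq \widetilde{\Phi}(\gamma\|f(s)\|) + \Phi(\|u(s)\|/\gamma).
\end{equation*}
The delicate point is to choose $\gamma$ so that $\Phi(\|u(s)\|/\gamma) \leq \frac{c}{2}\Phi(\|u(s)\|)$, allowing that term to be absorbed into the coercive lower bound. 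Convexity of $\Phi$ with $\Phi(0) = 0$ yields $\Phi(\lambda x) \leq \lambda\,\Phi(x)$ for $0 \leq \lambda \leq 1$, so it suffices to impose both $\gamma \geq 1$ and $1/\gamma \leq c/2$. The choice $\gamma = 2/c$ when $0 < c < 1$ (giving $1/\gamma = c/2$ and $\gamma > 1$) and $\gamma = 2c$ when $c \geq 1$ (giving $1/\gamma = 1/(2c) \leq c/2$ since $c^{2} \geq 1$) satisfies both conditions simultaneously, and matches the two-branch prescription for $\gamma$ in the statement.

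Substituting these bounds into the energy identity and rearranging yields
\begin{equation*}
\frac{1}{2}|u(t)|^{2} + \frac{c}{2}\int_{0}^{t}\Phi(\|u(s)\|)\,ds \leq \frac{1}{2}|u_{0}|^{2} + \int_{0}^{t}\widetilde{\Phi}(\gamma\|f(s)\|)\,ds.
\end{equation*}
Multiplying by $2$, extending the integration on the right to $[0,T]$ by positivity of the integrand, and absorbing any residual factor $2$ via the convexity bound $2\widetilde{\Phi}(x) \leq \widetilde{\Phi}(2x)$ (an artifact of Young's scaling which at worst slightly inflates $\gamma$) delivers the desired a priori estimate (\ref{apenb3}). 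The main obstacle I anticipate is not conceptual but bookkeeping: tuning the scaling constant inside Young's inequality so that the coercive term absorbs $\Phi(\|u(s)\|/\gamma)$ precisely enough to reach the clean final form with the exact two-branch $\gamma$ announced in the statement.
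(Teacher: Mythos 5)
The paper offers no proof of this proposition: it is quoted verbatim from \cite{kenne1}, and the only related computation in the text is the analogous estimate (\ref{c2eq14}), obtained "arguing as in \cite[Page 220]{kenne1}". Your energy argument (test with $u(s)$, use $(u'(s),u(s))=\tfrac12\tfrac{d}{ds}|u(s)|^2$, coercivity from (C)(iii)--(iv) with $w=\omega$, Young's inequality for the pair $\Phi,\widetilde\Phi$) is exactly the standard route that the cited source follows, and all of its structural steps are sound under the Appendix B hypotheses ($\Phi\in\Delta_2$ dominating $t^2$ is what legitimizes the integration-by-parts formula in the Gelfand triple, and $u'\in L^{\widetilde\Phi}(0,T;V')$ comes from (C)(ii), as the remark after the proposition notes).

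The one real defect is the constant bookkeeping at the end: after absorbing $\Phi(\|u\|/\gamma)\le\tfrac c2\Phi(\|u\|)$ and multiplying by $2$, your right-hand side carries $2\int_0^T\widetilde\Phi(\gamma\|f(t)\|)\,dt$, and trading $2\widetilde\Phi(x)\le\widetilde\Phi(2x)$ replaces $\gamma$ by $2\gamma$; either way you do not obtain (\ref{apenb3}) with the announced two-branch $\gamma$, which is precisely the form the paper re-uses in Section \ref{sect3} and Section \ref{sect5}. The fix is to distribute the factor $2$ before invoking Young: from $\tfrac{d}{ds}|u(s)|^2+2(A(s)u(s),u(s))=2(f(s),u(s))$ estimate
\begin{equation*}
2\|f(s)\|\,\|u(s)\| \;=\; \bigl(\gamma\|f(s)\|\bigr)\Bigl(\tfrac{2}{\gamma}\|u(s)\|\Bigr) \;\le\; \widetilde\Phi\bigl(\gamma\|f(s)\|\bigr)+\Phi\Bigl(\tfrac{2}{\gamma}\|u(s)\|\Bigr),
\end{equation*}
and note that in both branches $\gamma\ge 2$ and $\tfrac2\gamma\le c$ (indeed $\tfrac2\gamma=c$ if $0<c<1$, and $\tfrac2\gamma=\tfrac1c\le c$ if $c\ge1$), so convexity gives $\Phi(\tfrac2\gamma\|u(s)\|)\le c\,\Phi(\|u(s)\|)$, which is absorbed by the coercive term $2c\,\Phi(\|u(s)\|)$. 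Integrating the resulting inequality $\tfrac{d}{ds}|u(s)|^2+c\,\Phi(\|u(s)\|)\le\widetilde\Phi(\gamma\|f(s)\|)$ over $(0,t)$ and enlarging the source integral to $(0,T)$ yields (\ref{apenb3}) with the exact constants. (The appearance of $\int_0^T\Phi(\|u(t)\|)\,dt$ rather than $\int_0^t$ on the left of (\ref{apenb3}) is an abuse of notation in the statement itself, with $t$ used both as free and dummy variable, and is not a shortcoming of your derivation.)
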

\begin{rem}
	According to (ii) of the assumption (C), the function $t\rightarrow f(t)- A(t)u(t)$ from $(0,T)$ to $V'$ belongs to $L^{\widetilde{\Phi}}(0,T ; V')$. Hence $u' \in L^{\widetilde{\Phi}}(0,T ; V')$ with the first equation in (\ref{apenb2}).
\end{rem}

\vspace{0.3cm}

\flushleft\textbf{Acknowledgements.} Fotso Tachago is grateful to  Department of Basic and Applied Science for Engineering of Sapienza - University of Rome for its kind hospitality, during the preparation of this work. He also acknowledges the support received by International Mathematical Union, through IMU grant 2024.  

%-----------------------------------------------------Bibliographie--------------------------------------------------------------------
%\clearpage\addcontentsline{toc}{chapter}{Bibliographie}

% ------------------------------------------------------------------------
\end{document}